\tikzset{join/.code=\tikzset{after node path={%
\ifx\tikzchainprevious\pgfutil@empty\else(\tikzchainprevious)%
edge[every join]#1(\tikzchaincurrent)\fi}}}
\tikzset{>=stealth',every on chain/.append style={join},
         every join/.style={->}}
\tikzstyle{labeled}=[execute at begin node=$\scriptstyle,
\renewcommand{\vec}[1]{\mbox{\boldmath$#1$}}
\newcommand{\bel}[1]{\begin{equation}\label{#1}}
\newcommand{\be}{\begin{equation}}
\newcommand{\sign}{\ensuremath{\mathrm{sign}}}
\newcommand{\videpost}{{vide post}{}}
\newcommand{\power}{\ensuremath{\mathcal{P}}}
\newcommand{\vol}{\ensuremath{\mathrm{vol}}}
\newcommand{\ah}{\ensuremath{h_{\mathrm{anti}}}}
\newcommand{\Hmm}[1]{\leavevmode{\marginpar{\tiny%
$\hbox to 0mm{\hspace*{-0.5mm}$\leftarrow$\hss}%
\vcenter{\vrule depth 0.1mm height 0.1mm width \the\marginparwidth}%
\hbox to
0mm{\hss$\rightarrow$\hspace*{-0.5mm}}$\\\relax\raggedright #1}}}
\newtheorem{theorem}{Theorem}[section]
\newtheorem{lemma}[theorem]{Lemma}
\newtheorem{corollary}[theorem]{Corollary}
\newtheorem{definition}[theorem]{Definition}
\newtheorem{remark}[theorem]{Remark}
\newtheorem{prop}[theorem]{Proposition}
\newtheorem{question}[theorem]{Question}
\newcommand{\norm}[1]{\Vert #1\Vert_\infty}
\newcommand{\set}[1]{\{ #1\}}
\newcommand{\abs}[1]{\vert #1\vert}
\DeclareMathOperator*{\argmin}{\ensuremath{arg\,min}}
\begin{document}

\title{Lov\'asz extension and graph cut}

\author{Kung-Ching Chang}
\email{kcchang@math.pku.edu.cn}

\author{Sihong Shao}
\email{sihong@math.pku.edu.cn}

\author{Dong Zhang}
\email{dongzhang@pku.edu.cn}

\author{Weixi Zhang}
\email{zhangweixi@pku.edu.cn}

\address{LMAM and School of Mathematical Sciences,  Peking University, Beijing 100871, China}

\begin{abstract}
A set-pair Lov\'asz extension is established to construct equivalent continuous optimization problems for
graph $k$-cut problems.


\end{abstract}
\maketitle
       \makeatletter
\@addtoreset{equation}{section}
\makeatother   

\tableofcontents

Keywords: 
Lov\'asz extension;
Submodular;
Dual Cheeger cut;
Maxcut;
Graph cut

Mathematics Subject Classification: 90C35, 90C27, 05C85, 58E05.





\section{Introduction}
\label{sec:intro}


Motivated by the need of practical application in e.g., machine learning and big data, it is not only natural but also imperative for applied mathematicians to plug into valuable subjects emerged from well-established mathematics such as analytic techniques, topological views and algebraic structures. A firm bridge between discrete data world and continuous math field should be tremendously helpful. Along this direction, the Lov\'asz extension \cite{Lovasz1983} provides a both explicit and equivalent continuous optimization problem for a discrete optimization problem.
However, it deals with set-functions which admit only one input set and thus correspond to so-called $2$-cut problems, for instance, the Cheeger cut problem \cite{BuhlerRangapuramSetzerHein2013}.
Therefore it is not straightforward to apply the original Lov\'asz extension into general $k$-cut problems, such as the dual Cheeger cut \cite{Trevisan2012,BauerJost2013}.
Accordingly, we ask
\begin{question}\label{question:graphcut}
 How to write down a both explicit and equivalent continuous optimization problem for a graph $k$-cut problem?
\end{question}

Let us introduce some notations first.  
$G=(V,E)$ is an unweighted and undirected graph with vertex set $V=\{1,2,\cdots,n\}$ and edge set $E$, and $w_{ij}$ the weight of the edge $i\sim j$. For two disjoint subsets $A$ and $B$ of $V$, let $E(A,B)$ denote the set of edges that cross $A$ and $B$. For $S\subset V$, let $S^c=V\backslash S$ be the complement of $S$. The edge boundary of $S$ is $\partial S=\partial S^c=E(S, S^c)$.  The amount of edge set $E(A,B)$ is denoted by $|E(A,B)|=\sum_{i\in A}\sum_{j\in B}w_{ij}$, and  the volume of $S$ is defined to be $ \vol(S)=\sum_{i\in S} d_i$, where $d_i=\sum_{j=1}^n w_{ij}$ is the degree of the vertex $i$.

\begin{definition}[dual Cheeger cut \cite{Trevisan2012,BauerJost2013}]
\rm 
The dual Cheeger problem  is devoted to solving
\begin{equation}
\label{eq:dual-Cheeger-constant}
 h^+(G)= \max_{S_1\cap S_2=\varnothing, S_1\cup S_2\neq \varnothing}\frac{2|E(S_1, S_2)|}{\vol(S_1\cup S_2)},
 \end{equation}
and we call $h^+(G)$ the dual Cheeger constant.
\end{definition}


To say the least, before we discuss Question \ref{question:graphcut},
the following specific question needs to be solved at the first place.

\begin{question}\label{question:dualcc}

Is there an explicit and equivalent continuous optimization for a graph $3$-cut problem like the dual Cheeger cut  \eqref{eq:dual-Cheeger-constant}?
\end{question}

In this work, we propose a set-pair Lov\'asz extension which not only provides a complete answer to Question \ref{question:dualcc} (even works for a series of graph $3$-cut problems),
but also enlarges the feasible region of resulting equivalent continuous optimization problems from the first quadrant $\mathbb{R}_+^n\setminus\{\vec 0\}$ (see Theorem \ref{th:Lovasz}) to the entire space $\mathbb{R}^n\setminus\{\vec 0\}$ (see Theorem \ref{th:set-pair-Lovasz}) for graph $2$-cut problems like the Cheeger cut (see Theorem \ref{th:cheeger_full}).
This enlarged feasible region may have some advantages in designing solution algorithms.


\begin{definition}[set-pair Lov\'asz extension] 
 \label{def:Lovasz-pair}
Let $V=\{1,\ldots,n\}$. For $\vec x\in \mathbb{R}^n$,  let $\sigma:V\cup\{0\}\to V\cup\{0\}$ be a bijection such that $|x_{\sigma(1)}|\le |x_{\sigma(2)}| \le \cdots\le |x_{\sigma(n)}|$ and $\sigma(0)=0$, where $x_0:=0$. One defines the sets
\begin{equation}
V_{\sigma(i)}^\pm:=\{j\in V:\pm x_j> |x_{\sigma(i)}|\},\;\;\;\; i=0,1,\ldots,n-1.
\end{equation}
Let
\begin{equation}
\power_2(V)=\{(A,B):A,B\subset V\text{ with }A\cap B=\varnothing\}.
\end{equation}
Given $f:\power_2(V)\to [0,+\infty)$, the set-pair {Lov\'asz extension} of $f$ is a mapping from $\mathbb{R}^n$ to $\mathbb{R}$ defined by
\begin{equation}\label{eq:fL}
f^{L}(\vec x)=\sum_{i=0}^{n-1}(|x_{\sigma(i+1)}|-|x_{\sigma(i)}|)f(V_{\sigma(i)}^+,V_{\sigma(i)}^-).
\end{equation}
\end{definition}

\begin{theorem}
\label{th:set-pair-Lovasz}
Assume that $f,g:\power_2(V)\to [0,+\infty)$ are two set-pair functions with $g(A,B)>0$ whenever $A\cup B\ne\varnothing$. Then there hold both
\begin{equation}\label{eq:setpair-Lovasz-min}
\min\limits_{(A,B)\in \power_2(V)\setminus\{(\varnothing,\varnothing)\}}\frac{f(A,B)}{g(A,B)}=\min\limits_{\vec   x\ne\vec  0}\frac{f^L( \vec  x)}{g^L( \vec  x)},
\end{equation}
and
\begin{equation}\label{eq:setpair-Lovasz-max}
\max\limits_{(A,B)\in \power_2(V)\setminus\{(\varnothing,\varnothing)\}}\frac{f(A,B)}{g(A,B)}=\max\limits_{\vec   x\ne\vec  0}\frac{f^L( \vec  x)}{g^L( \vec  x)}.
\end{equation}
\end{theorem}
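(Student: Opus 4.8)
The plan is to mimic the classical Lov\'asz-extension argument, reducing the equality of optima to a pair of inequalities, each proved by exhibiting, for every feasible point on one side, a feasible point on the other side at which the objective is no worse. First I would establish the two structural facts about $f^L$ that make everything run. \emph{Fact one (the level-set decomposition).} For $\vec x\ne\vec 0$ write $t_i=|x_{\sigma(i+1)}|-|x_{\sigma(i)}|\ge 0$ for $i=0,\dots,n-1$; then $f^L(\vec x)=\sum_{i=0}^{n-1}t_i\,f(V_{\sigma(i)}^+,V_{\sigma(i)}^-)$ is a nonnegative combination of the values $f(A_i,B_i)$ taken over the nested chain of set-pairs $(A_i,B_i):=(V_{\sigma(i)}^+,V_{\sigma(i)}^-)$, and $\sum_i t_i=|x_{\sigma(n)}|=\max_j|x_j|>0$. \emph{Fact two (positive homogeneity).} $f^L(c\vec x)=|c|\,f^L(\vec x)$, so the Rayleigh quotient $f^L/g^L$ is invariant under nonzero scaling; combined with the continuity of $f^L$ this also shows $g^L(\vec x)=\sum_i t_i\,g(A_i,B_i)>0$ whenever $\vec x\ne\vec 0$, since at least one $A_i\cup B_i\ne\varnothing$ (indeed $A_0\cup B_0=\{j:x_j\ne 0\}\ne\varnothing$) and the corresponding $g$-value is strictly positive with a strictly positive coefficient $t_0$ — wait, one must be slightly careful: $t_0=|x_{\sigma(1)}|-0$ may vanish; the correct observation is that the \emph{largest} index $i^\ast$ with $A_{i^\ast}\cup B_{i^\ast}\ne\varnothing$ has $t_{i^\ast}>0$, so $g^L(\vec x)>0$ throughout $\mathbb R^n\setminus\{\vec 0\}$. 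This well-definedness is the first thing to pin down.

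Next I would prove the ``$\ge$'' direction of \eqref{eq:setpair-Lovasz-min} (equivalently the ``$\le$'' direction of \eqref{eq:setpair-Lovasz-max}) by the elementary mediant inequality: if $a_i\ge 0$, $b_i>0$ for the indices that matter, then $\frac{\sum_i t_i a_i}{\sum_i t_i b_i}$ lies between $\min_i\frac{a_i}{b_i}$ and $\max_i\frac{a_i}{b_i}$. Applying this with $a_i=f(A_i,B_i)$, $b_i=g(A_i,B_i)$ over the indices $i$ with $t_i>0$ (for which $A_i\cup B_i\ne\varnothing$, hence $b_i>0$) gives, for every $\vec x\ne\vec 0$,
\[
\min_{i:\,t_i>0}\frac{f(A_i,B_i)}{g(A_i,B_i)}\ \le\ \frac{f^L(\vec x)}{g^L(\vec x)}\ \le\ \max_{i:\,t_i>0}\frac{f(A_i,B_i)}{g(A_i,B_i)},
\]
and since each $(A_i,B_i)\in\power_2(V)\setminus\{(\varnothing,\varnothing)\}$ is an admissible competitor on the discrete side, the left-hand (resp.\ right-hand) bound is $\ge$ (resp.\ $\le$) the discrete minimum (resp.\ maximum). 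Taking the infimum over $\vec x$ yields $\min_{\vec x\ne\vec 0} f^L/g^L\ \ge\ \min_{(A,B)} f/g$, and symmetrically for the max.

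For the reverse inequalities I would use the standard ``indicator-type'' test vectors, now in the set-pair setting: given a disjoint pair $(A,B)$ with $A\cup B\ne\varnothing$, set $x_j=1$ for $j\in A$, $x_j=-1$ for $j\in B$, and $x_j=0$ otherwise. Then $\vec x\ne\vec 0$, the sorted absolute values are $0\le\cdots\le 0<1\le\cdots\le 1$, so there is exactly one nonzero gap $t_{i^\ast}=1$, at which $V_{\sigma(i^\ast)}^+=\{j:x_j>0\}=A$ and $V_{\sigma(i^\ast)}^-=\{j:x_j<0\}=B$, while every other term has coefficient $0$; hence $f^L(\vec x)=f(A,B)$ and $g^L(\vec x)=g(A,B)$, giving $\frac{f^L(\vec x)}{g^L(\vec x)}=\frac{f(A,B)}{g(A,B)}$. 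Therefore $\min_{\vec x\ne\vec 0} f^L/g^L\ \le\ f(A,B)/g(A,B)$ for every admissible $(A,B)$, so $\min_{\vec x}f^L/g^L\ \le\ \min_{(A,B)}f/g$, and dually $\max_{\vec x}f^L/g^L\ \ge\ \max_{(A,B)}f/g$. Combining the two directions proves both \eqref{eq:setpair-Lovasz-min} and \eqref{eq:setpair-Lovasz-max}.

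The only genuinely delicate points — and where I would spend the most care — are the bookkeeping around ties and zeros in the sorting $\sigma$: one must check that the sets $V_{\sigma(i)}^\pm$ are well defined independent of how ties are broken, that the collapsing telescoped sum still equals $\sum_i t_i f(A_i,B_i)$, and above all that the coefficient $t_{i^\ast}$ attached to the \emph{last} nontrivial set-pair is strictly positive, which is what rescues the mediant inequality from a vacuous or $0/0$ situation and simultaneously certifies $g^L(\vec x)>0$ on all of $\mathbb R^n\setminus\{\vec 0\}$. Everything else is the classical Lov\'asz argument carried over verbatim, with ``threshold set $\{x>t\}$'' replaced by the signed pair $(\{x>t\},\{-x>t\})=(\{x>t\},\{x<-t\})$.
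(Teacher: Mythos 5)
Your proposal is correct and follows essentially the same route as the paper: the mediant (weighted ratio) bound over the chain $(V_{\sigma(i)}^+,V_{\sigma(i)}^-)$ with positive gaps is exactly the paper's argument via the nonnegative quantities $\Pi_i$, and the reverse inequality via the signed indicator vectors $\vec 1_{A,B}$ is the paper's use of $f^L(\vec 1_{A,B})=f(A,B)$. Your extra care about zero gaps and the positivity of $g^L$ only makes explicit details the paper leaves implicit.
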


Theorem \ref{th:set-pair-Lovasz} and its applications listed below show a natural answer to Question \ref{question:dualcc}.

\begin{theorem}
\label{th:dualCheeger}
\begin{equation}\label{eq:dualCheeger-I(x)}
1-h^+(G)=\min\limits_{\vec x\ne \vec 0}\frac{I^+(\vec x)}{\| \vec  x\| },
\end{equation}
where
\begin{align}
\| \vec  x\| &= \sum_{i=1}^n d_i|x_i|,  \\
I^+(\vec x) &=\sum_{i< j}w_{ij}|x_i+x_j|. \label{eq:I+(x)}
\end{align}
\end{theorem}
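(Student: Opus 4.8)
The plan is to apply Theorem \ref{th:set-pair-Lovasz} with a carefully chosen pair of set-pair functions $f,g$, so that the ratio $f(A,B)/g(A,B)$ equals $1-h^+(G)$ after taking the minimum, and so that the associated Lov\'asz extensions $f^L,g^L$ coincide with $I^+(\vec x)$ and $\|\vec x\|$ respectively. First I would rewrite the dual Cheeger constant: since $2|E(S_1,S_2)| = \vol(S_1\cup S_2) - \sum_{i\in S_1,j\in S_1}w_{ij} - \sum_{i\in S_2,j\in S_2}w_{ij} - 2|E(S_1\cup S_2,(S_1\cup S_2)^c)|$ — more precisely, the complement of the ``good'' edges inside $\vol(S_1\cup S_2)$ — one gets
\begin{equation}
1-h^+(G)=\min_{S_1\cap S_2=\varnothing,\, S_1\cup S_2\neq\varnothing}\frac{\vol(S_1\cup S_2)-2|E(S_1,S_2)|}{\vol(S_1\cup S_2)}.
\end{equation}
So I would set $g(A,B)=\vol(A\cup B)=\sum_{i\in A\cup B}d_i$ and $f(A,B)=\vol(A\cup B)-2|E(A,B)|$. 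Note $g(A,B)>0$ whenever $A\cup B\neq\varnothing$, as required, and both are nonnegative ($f(A,B)\geq 0$ because $2|E(A,B)|\leq \vol(A)\leq\vol(A\cup B)$, for instance). Then \eqref{eq:setpair-Lovasz-min} immediately gives $1-h^+(G)=\min_{\vec x\neq\vec 0} f^L(\vec x)/g^L(\vec x)$, and it remains only to identify the two extensions.

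Next I would compute $g^L$ and $f^L$ directly from the definition \eqref{eq:fL}. For $g$: since $g$ is a ``sum over vertices'' function, $g(V^+_{\sigma(i)},V^-_{\sigma(i)})=\sum_{j:\,|x_j|>|x_{\sigma(i)}|}d_j$, and the telescoping sum $\sum_{i=0}^{n-1}(|x_{\sigma(i+1)}|-|x_{\sigma(i)}|)\sum_{j:|x_j|>|x_{\sigma(i)}|}d_j$ collapses (by exchanging the order of summation, each vertex $j$ contributes $d_j\sum_{i:|x_{\sigma(i)}|<|x_j|}(|x_{\sigma(i+1)}|-|x_{\sigma(i)}|)=d_j|x_j|$) to $\sum_j d_j|x_j|=\|\vec x\|$. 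This is the standard Lov\'asz-extension identity adapted to absolute values. For $f$, write $f(A,B)=\sum_{i\in A\cup B}d_i - 2\sum_{i\in A,j\in B}w_{ij}$; the first part again telescopes to $\|\vec x\|=\sum_i d_i|x_i|=\sum_{i<j}w_{ij}(|x_i|+|x_j|)$ (using $d_i=\sum_j w_{ij}$ and symmetrizing), while for the second part I need to evaluate $\sum_{i=0}^{n-1}(|x_{\sigma(i+1)}|-|x_{\sigma(i)}|)\cdot 2|E(V^+_{\sigma(i)},V^-_{\sigma(i)})|$. The key computation: for a fixed edge $k\sim l$ with, say, $x_k>0>x_l$, the pair $(k,l)$ (or $(l,k)$) lies in $(V^+_{\sigma(i)},V^-_{\sigma(i)})$ exactly when $|x_{\sigma(i)}|<\min(|x_k|,|x_l|)=\min(x_k,-x_l)$, so its contribution to the telescoping sum is $2w_{kl}\min(|x_k|,|x_l|)$. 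Combining, $f^L(\vec x)=\sum_{i<j}w_{ij}(|x_i|+|x_j|) - \sum_{\text{opp. sign}}2w_{kl}\min(|x_k|,|x_l|)$, and since for opposite signs $|x_k|+|x_l|-2\min(|x_k|,|x_l|)=\bigl||x_k|-|x_l|\bigr|=|x_k+x_l|$, whereas for equal signs $|x_k|+|x_l|=|x_k+x_l|$, everything collapses to $\sum_{i<j}w_{ij}|x_i+x_j|=I^+(\vec x)$. That yields exactly \eqref{eq:dualCheeger-I(x)}.

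The main obstacle is the bookkeeping in the edge-contribution computation for $f^L$: one must be careful that when $x_k$ and $x_l$ have opposite signs, the edge contributes to $|E(V^+_{\sigma(i)},V^-_{\sigma(i)})|$ for the relevant range of $i$, but when they have the same sign (or one is zero) it never does, and that the ``$\min$'' is of the absolute values, not the signed values; getting the sign cases right is exactly what makes the $|x_i+x_j|$ (rather than $|x_i-x_j|$) appear. A secondary point worth stating cleanly is the reduction of $h^+(G)$ to the minimization form — i.e., verifying the identity $\vol(S_1\cup S_2)-2|E(S_1,S_2)| = \sum_{i<j}w_{ij}|(\mathbf 1_{S_1}-\mathbf 1_{S_2})_i+(\mathbf 1_{S_1}-\mathbf 1_{S_2})_j|$ at the discrete level, which is just the $\vec x\in\{-1,0,1\}^n$ case of the extension identity and serves as a useful sanity check. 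Once these two computations are in hand, Theorem \ref{th:set-pair-Lovasz} does all the remaining work.
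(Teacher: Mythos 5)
Your proposal is correct and takes essentially the same route as the paper: it applies Theorem \ref{th:set-pair-Lovasz} with denominator $G_2(A,B)=\vol(A)+\vol(B)=\vol(A\cup B)$ (whose extension is $\|\vec x\|$) and with the cut-edge function $|E(A,B)|$ in the numerator, whose extension $\tfrac12\|\vec x\|-\tfrac12 I^+(\vec x)$ you re-derive by a per-edge telescoping computation instead of quoting the integral-form calculation behind Table~\ref{tab:L}, and using the minimization form on $\vol(A\cup B)-2|E(A,B)|$ rather than the maximization form \eqref{eq:setpair-Lovasz-max} on $2|E(A,B)|/G_2(A,B)$ is an equivalent bookkeeping choice. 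The only slip is your justification of $f(A,B)\ge 0$: the inequality $2|E(A,B)|\le\vol(A)$ is false in general (take $A$ and $B$ to be the two endpoints of a single edge); the correct bound is $|E(A,B)|\le\min\{\vol(A),\vol(B)\}$, hence $2|E(A,B)|\le\vol(A)+\vol(B)=\vol(A\cup B)$, which is all that is needed.
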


\begin{definition}[max 3-cut \cite{FriezeJerrum1997}] 
 The max 3-cut problem is to determine a graph 3-cut by solving
\begin{equation} \label{eq:max3}
h_{\max,3}(G) = \max_{A,B,C}\frac{2(|E(A,B)|+|E(B,C)|+|E(C,A)|)}{\vol(V)},
\end{equation}
and the associate $(A,B,C)$ is called a max 3-cut, where the subsets $A,B,C$ satisfy $A\cap B=B\cap C= C\cap A=\varnothing$ and $A\cup B\cup C=V$.
\end{definition}

\begin{theorem}
\label{th:max3cut}
\begin{equation}\label{eq:max3cut-continuous}
h_{\max,3}(G)=\max\limits_{\vec x\ne\vec0}\frac{I(\vec x)+\hat{I}(\vec x) }{\vol(V)\norm{\vec x}},
\end{equation}
where
\begin{align}
\label{eq:|I|(x)}
I(\vec x)&=\sum_{i< j}w_{ij}|x_i-x_j|,\\
\hat{I}(\vec x)&=\sum_{i<j}w_{ij}\left||x_i|-|x_j|\right|,\\
\label{eq:||x||infty}
\|\vec x\|_\infty&=\max\{|x_1|,\ldots,|x_n|\}.
\end{align}
\end{theorem}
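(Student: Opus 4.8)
The plan is to obtain \eqref{eq:max3cut-continuous} from the max-type identity \eqref{eq:setpair-Lovasz-max} of Theorem \ref{th:set-pair-Lovasz}, applied to a well-chosen pair of set-pair functions. For $(A,B)\in\power_2(V)$ write $C:=V\setminus(A\cup B)$ and set
\[
f(A,B):=2\bigl(|E(A,B)|+|E(B,C)|+|E(C,A)|\bigr),\qquad
g(A,B):=\begin{cases}\vol(V),& A\cup B\neq\varnothing,\\[2pt] 0,& A=B=\varnothing.\end{cases}
\]
Then $f,g:\power_2(V)\to[0,+\infty)$ and $g(A,B)>0$ whenever $A\cup B\neq\varnothing$ (we may assume $G$ has at least one edge, so $\vol(V)>0$; the edgeless case is vacuous). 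As $(A,B)$ ranges over $\power_2(V)$ the triple $(A,B,C)$ ranges over all ordered partitions of $V$ into three possibly empty parts, so by \eqref{eq:max3},
\[
\max_{(A,B)\in\power_2(V)\setminus\{(\varnothing,\varnothing)\}}\frac{f(A,B)}{g(A,B)}=h_{\max,3}(G),
\]
the omitted pair $(\varnothing,\varnothing)$ being harmless since its numerator $f(\varnothing,\varnothing)=0$ cannot improve the maximum. Hence it suffices to prove $g^L(\vec x)=\vol(V)\norm{\vec x}$ and $f^L(\vec x)=I(\vec x)+\hat{I}(\vec x)$; granting these, \eqref{eq:max3cut-continuous} is exactly \eqref{eq:setpair-Lovasz-max} for this $f,g$.

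The identity for $g^L$ follows straight from \eqref{eq:fL}: the $i$-th summand equals $\vol(V)$ precisely when $V^+_{\sigma(i)}\cup V^-_{\sigma(i)}=\{j:|x_j|>|x_{\sigma(i)}|\}$ is nonempty, i.e. for $i=0,1,\dots,i^{*}$ where $i^{*}$ is the last index with $|x_{\sigma(i^{*})}|<\norm{\vec x}$, and the differences $|x_{\sigma(i+1)}|-|x_{\sigma(i)}|$ over this range telescope to $\norm{\vec x}$. For $f^L$ I would first recast \eqref{eq:fL} in the equivalent layer-cake form
\[
f^L(\vec x)=\int_0^{\infty}f\bigl(\{j:x_j>t\},\ \{j:x_j<-t\}\bigr)\,dt ,
\]
which holds because on each interval $(|x_{\sigma(i)}|,|x_{\sigma(i+1)}|)$ the super- and sub-level sets are constant and equal $(V^+_{\sigma(i)},V^-_{\sigma(i)})$, while for $t>\norm{\vec x}$ the integrand is $f(\varnothing,\varnothing)=0$.

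Expanding $f$ over edges turns the integrand into $2\sum_{p<q}w_{pq}\chi_{pq}(t)$, where $\chi_{pq}(t)=1$ iff the thresholds $\pm t$ place $p$ and $q$ in different blocks of the induced three-block partition, and $\chi_{pq}(t)=0$ if they land in the same block; in particular $\chi_{pq}(t)=0$ whenever $|x_p|,|x_q|\le t$, which is what makes the improper integral converge. Fixing an edge and writing $a=|x_p|\ge b=|x_q|$ (without loss of generality), a short case analysis on the signs of $x_p,x_q$ gives $\int_0^\infty\chi_{pq}(t)\,dt=a-b$ when $x_px_q\ge0$ and $=a$ when $x_px_q<0$; since $|x_p-x_q|=a+b$ in the second case, in both cases $2\int_0^\infty\chi_{pq}(t)\,dt=|x_p-x_q|+\bigl||x_p|-|x_q|\bigr|$. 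Summing over edges gives $f^L(\vec x)=I(\vec x)+\hat{I}(\vec x)$, which completes the argument. I expect the only genuine obstacle to be this last per-edge computation — verifying that the opposite-sign regime contributes $\max(|x_p|,|x_q|)$ and that the regime with both endpoints below the threshold contributes nothing — while the reduction to Theorem \ref{th:set-pair-Lovasz} carries the structural weight.
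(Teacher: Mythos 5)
Your argument is correct: the reduction to Theorem \ref{th:set-pair-Lovasz}, the telescoping computation giving $g^L(\vec x)=\vol(V)\norm{\vec x}$, and the per-edge layer-cake evaluation of $f^L$ (with the two sign regimes contributing $\bigl||x_p|-|x_q|\bigr|$ and $\max\{|x_p|,|x_q|\}$ respectively, both equal to $\tfrac12\bigl(|x_p-x_q|+\bigl||x_p|-|x_q|\bigr|\bigr)$) all check out, and the exclusion of $(\varnothing,\varnothing)$ is indeed harmless since its coefficient in \eqref{eq:fL} vanishes. Where you differ from the paper is in the evaluation of the numerator's extension: the paper first rewrites $|E(A,B)|+|E(B,C)|+|E(C,A)|=|E(A,B)|+|\partial(A\cup B)|=|\partial A|+|\partial B|-|E(A,B)|=F_1(A,B)-F_2(A,B)$, invokes the precomputed entries $F_1^L=I$, $F_2^L=\tfrac12\|\vec x\|-\tfrac12 I^+$, $G_1^L=\vol(V)\norm{\vec x}$ from Table \ref{tab:L}, obtains the intermediate form $(2I(\vec x)-\|\vec x\|+I^+(\vec x))/(\vol(V)\norm{\vec x})$, and only then converts it to $I+\hat I$ via the pointwise identity $I(\vec x)=\|\vec x\|+\hat I(\vec x)-I^+(\vec x)$. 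You instead apply the set-pair extension directly to the raw $3$-cut functional and land on $I+\hat I$ in one per-edge computation, never introducing $I^+$ or the decomposition $F_1-F_2$. Your route is more self-contained and arguably cleaner for this single theorem; the paper's route buys reusability (the same table entries $F_1^L,F_2^L,G_1^L$ drive Theorems \ref{th:dualCheeger}, \ref{th:max3cut-ratio1}, \ref{th:max3cut-II}, \ref{th:maxcut}) and makes explicit the intermediate expression $2I-\|\vec x\|+I^+$ that links max $3$-cut to the dual Cheeger and maxcut functionals. The only points worth tightening are cosmetic: state explicitly that the value of $f$ or $g$ at $(\varnothing,\varnothing)$ never enters $f^L,g^L$ (whenever $V^+_{\sigma(i)}\cup V^-_{\sigma(i)}=\varnothing$ the coefficient $|x_{\sigma(i+1)}|-|x_{\sigma(i)}|$ is zero), and note that $\vol(V)>0$ is needed for the ratio on both sides to be meaningful, exactly as the paper implicitly assumes.
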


\begin{definition}[ratio max 3-cut I] 
\rm 
 The first ratio max 3-cut problem is to determine a graph 3-cut by solving
\begin{equation} \label{eq:max3I33}
h_{\max,3,I}(G) = \max_{A,B,C}\frac{2(|E(A,B)|+|E(B,C)|+|E(C,A)|)}{\vol(A)+\vol(B)},
\end{equation}
where $A\cap B=B\cap C= C\cap A=\varnothing$ and $A\cup B\cup C=V$.
\end{definition}

\begin{theorem}\label{th:max3cut-ratio1}
\begin{equation*}
1-h_{\max,3,I}(G)=\min\limits_{\vec x\ne\vec0}\frac{ I^+(\vec x)- 2\hat{I}(\vec x) }{\|\vec x\|}.
\end{equation*}
\end{theorem}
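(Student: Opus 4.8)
The plan is to recognize $1-h_{\max,3,I}(G)$ as the minimum over set-pairs of a quotient $f/g$, transfer it to the continuous side via Theorem~\ref{th:set-pair-Lovasz}, and then identify the set-pair Lov\'asz extensions $f^{L},g^{L}$ with the two sides of the claimed fraction. First I would observe that a graph $3$-cut $(A,B,C)$ with $A\cup B\cup C=V$ is the same datum as a disjoint pair $(A,B)\in\power_2(V)$ together with $C:=V\setminus(A\cup B)$, and that $(\varnothing,\varnothing)$ is precisely the pair excluded by a vanishing denominator. Hence, taking $1-(\cdot)$ inside the maximum \eqref{eq:max3I33} gives
\[
1-h_{\max,3,I}(G)=\min_{(A,B)\in\power_2(V)\setminus\{(\varnothing,\varnothing)\}}\frac{f(A,B)}{g(A,B)},\qquad
g(A,B)=\vol(A)+\vol(B),
\]
with $f(A,B)=g(A,B)-2\bigl(|E(A,B)|+|E(B,C)|+|E(C,A)|\bigr)$ and $C=V\setminus(A\cup B)$.

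\textbf{Transfer to $\mathbb{R}^n$.} Assuming $G$ has no isolated vertex, $g\ge 0$ and $g(A,B)>0$ whenever $A\cup B\neq\varnothing$. However $f$ need not be nonnegative (indeed $h_{\max,3,I}(G)>1$ can occur), so $(f,g)$ does not directly satisfy the hypotheses of Theorem~\ref{th:set-pair-Lovasz}. I would circumvent this by a shift: pick $M>0$ with $f_M:=f+Mg\ge 0$ on $\power_2(V)$ (possible since $f$ is bounded and $g$ is bounded below by a positive constant off $(\varnothing,\varnothing)$), apply Theorem~\ref{th:set-pair-Lovasz} to $(f_M,g)$, and use that the coefficients in \eqref{eq:fL} depend only on $\vec x$, so $(\cdot)^{L}$ is linear and $f_M^{L}=f^{L}+Mg^{L}$, while $f_M/g=f/g+M$. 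The additive constant $M$ then cancels on both sides, giving $1-h_{\max,3,I}(G)=\min_{\vec x\neq\vec 0}f^{L}(\vec x)/g^{L}(\vec x)$. (If a version of Theorem~\ref{th:set-pair-Lovasz} for differences of nonnegative set-pair functions is available, this step is unnecessary.)

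\textbf{Computing the extensions.} It remains to show $g^{L}(\vec x)=\|\vec x\|$ and $f^{L}(\vec x)=I^{+}(\vec x)-2\hat I(\vec x)$. For $g$, write $g(A,B)=\sum_{k<l}w_{kl}\bigl(\mathbf 1_{\{k\in A\cup B\}}+\mathbf 1_{\{l\in A\cup B\}}\bigr)$; substituting the chain $(V_{\sigma(i)}^{+},V_{\sigma(i)}^{-})$ into \eqref{eq:fL}, each edge term telescopes to $|x_k|+|x_l|$, whence $g^{L}(\vec x)=\sum_k d_k|x_k|=\|\vec x\|$ (this is the computation already underlying Theorem~\ref{th:dualCheeger}). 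For $f$, decompose edgewise, $f(A,B)=\sum_{k<l}w_{kl}\,\varphi_{kl}(A,B)$ with $\varphi_{kl}(A,B)=\mathbf 1_{\{k\in A\cup B\}}+\mathbf 1_{\{l\in A\cup B\}}-2\cdot\mathbf 1_{\{k,l\ \text{in distinct parts of}\ (A,B,C)\}}$; note that the edges joining $A\cup B$ to $C$ are what makes this differ from the Cheeger-type functional, and they are the source of the $\hat I$ term, which is exactly the set-pair extension of $(A,B)\mapsto|E(A\cup B,V\setminus(A\cup B))|$. For a fixed edge $k\sim l$ with $|x_k|\le|x_l|$, evaluating $\varphi_{kl}$ along the chain $(V_{\sigma(i)}^{+},V_{\sigma(i)}^{-})$ yields the value $2\cdot\mathbf 1_{\{\sign x_k=\sign x_l\neq 0\}}$ while $|x_{\sigma(i)}|<|x_k|$, the value $-1$ while $|x_k|\le|x_{\sigma(i)}|<|x_l|$, and $0$ thereafter, so telescoping the weights in \eqref{eq:fL} gives
\[
\varphi_{kl}^{L}(\vec x)=2|x_k|\,\mathbf 1_{\{\sign x_k=\sign x_l\neq 0\}}-(|x_l|-|x_k|)=|x_k+x_l|-2\bigl||x_k|-|x_l|\bigr|,
\]
the last equality by checking the three cases (equal signs, opposite signs, one coordinate zero). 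Summing over edges gives $f^{L}(\vec x)=I^{+}(\vec x)-2\hat I(\vec x)$, and combining with the previous step completes the proof.

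\textbf{Main obstacle.} The substantive point is the failure of nonnegativity of $f$, handled by the shift; the remaining work is the bookkeeping in the three-case telescoping above, which is where the boundary-of-union structure producing $-2\hat I$ appears.
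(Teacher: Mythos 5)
Your proposal is correct, and it rests on the same engine as the paper, namely Theorem~\ref{th:set-pair-Lovasz}; the differences are in how you get around nonnegativity and how you compute the extensions. The paper never encounters a signed set-pair function: it applies the theorem directly to the maximization form, writing $2(|E(A,B)|+|E(B,C)|+|E(C,A)|)=2\bigl(F_1(A,B)-F_2(A,B)\bigr)\ge 0$ with $F_1(A,B)=|\partial A|+|\partial B|$, $F_2(A,B)=|E(A,B)|$, $G_2(A,B)=\vol(A)+\vol(B)$, reads off $F_1^L=I$, $F_2^L=\tfrac12(\|\vec x\|-I^+)$, $G_2^L=\|\vec x\|$ from Table~\ref{tab:L}, and only then converts $1-\max$ into $\min$, using the identity $I(\vec x)=\|\vec x\|+\hat I(\vec x)-I^+(\vec x)$ (established in the proof of Theorem~\ref{th:max3cut}) to arrive at $I^+-2\hat I$ in the numerator. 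You instead pass to $1-h_{\max,3,I}$ first, which makes the numerator $f=G_2-2(F_1-F_2)$ genuinely signed (your observation that this can be negative is right, e.g.\ $A$ a single leaf of a star), and you repair this with the shift $f_M=f+Mg$; that argument is sound, since the formula \eqref{eq:fL} is linear in the set-pair function and $g^L=\|\vec x\|>0$ for $\vec x\ne\vec 0$ when all degrees are positive, but it is extra machinery the paper's ordering of steps makes unnecessary. Your second deviation is computing $f^L$ from scratch by the edgewise case analysis (values $2\cdot\mathbf 1_{\{\sign x_k=\sign x_l\ne 0\}}$, $-1$, $0$ on the three threshold ranges), which I checked and which correctly yields $\varphi_{kl}^L=|x_k+x_l|-2\bigl||x_k|-|x_l|\bigr|$ and hence $f^L=I^+-2\hat I$; this is an independent verification of what the paper obtains by combining the tabulated extensions with the algebraic identity above. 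So: same strategy, a slightly heavier but valid treatment of the sign issue, and a direct computation in place of the paper's reuse of Table~\ref{tab:L}; the paper's route is shorter, yours is more self-contained at the level of this single theorem.
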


\begin{definition}[ratio max 3-cut II] 
\rm 
 The second ratio max 3-cut problem is to determine a graph 3-cut by solving
\begin{equation} \label{eq:max3II}
h_{\max,3,II}(G) = \max_{A,B,C}\frac{2(|E(A,B)|+|E(B,C)|+|E(C,A)|)}{\max\{\vol(A\cup B),\vol(C)\}},
\end{equation}
where $A\cap B=B\cap C= C\cap A=\varnothing$ and $A\cup B\cup C=V$.
\end{definition}

\begin{theorem}\label{th:max3cut-II}
\begin{equation}\label{eq:tmp2}
h_{\max,3,II}(G)=\max\limits_{\vec x\ne\vec0}\frac{2I(\vec x)-\|\vec x\|+I^+(\vec x)}{\vol(V)\|\vec x\|_\infty-\min\limits_{\alpha\in \mathbb{R}}\sum_{i=1}^nd_i\left||x_i|-\alpha\right|}.
\end{equation}
\end{theorem}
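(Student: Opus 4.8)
The plan is to exhibit the numerator and the denominator of \eqref{eq:tmp2} as set-pair Lov\'asz extensions and then invoke the maximum identity \eqref{eq:setpair-Lovasz-max}. Identify each ordered $3$-partition $(A,B,C)$ of $V$ with the pair $(S_1,S_2)=(A,B)\in\power_2(V)$, so that $C=(S_1\cup S_2)^c$; writing $S=S_1\cup S_2$ one checks $|E(A,B)|+|E(B,C)|+|E(C,A)|=|E(S_1,S_2)|+|\partial S|$ and $\max\{\vol(A\cup B),\vol(C)\}=\max\{\vol(S),\vol(V)-\vol(S)\}$. Hence, setting
\[
f(S_1,S_2)=2|E(S_1,S_2)|+2|\partial(S_1\cup S_2)|,\qquad g(S_1,S_2)=\max\{\vol(S_1\cup S_2),\,\vol(V)-\vol(S_1\cup S_2)\},
\]
one has $h_{\max,3,II}(G)=\max_{(S_1,S_2)\ne(\varnothing,\varnothing)}f(S_1,S_2)/g(S_1,S_2)$: the only partition excluded by dropping $(\varnothing,\varnothing)$ is $C=V$, which gives ratio $0$ and so is irrelevant to the maximum whenever $E\ne\varnothing$, while $g\ge\vol(V)/2>0$ always, so Theorem~\ref{th:set-pair-Lovasz} applies to $f$ and $g$.

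To compute the extensions, I would first note that whenever a set-pair function has the form $(S_1,S_2)\mapsto h(S_1\cup S_2)$ for some $h:2^V\to[0,+\infty)$, formula \eqref{eq:fL} telescopes into the coarea-type expression $\int_0^{\|\vec x\|_\infty}h\bigl(\{j:|x_j|>t\}\bigr)\,dt$, since $V_{\sigma(i)}^+\cup V_{\sigma(i)}^-=\{j:|x_j|>|x_{\sigma(i)}|\}$ and $\{j:|x_j|>t\}$ is constant on each interval $[\,|x_{\sigma(i)}|,|x_{\sigma(i+1)}|)$. Specializing gives: $h=\vol$ yields $\|\vec x\|$; $h=|\partial\cdot|$ yields $\hat I(\vec x)$ (the contribution of an edge $i\sim j$ is $w_{ij}$ times the length of the set of levels $t$ separating $|x_i|$ from $|x_j|$); and $h(\cdot)=\max\{\vol(\cdot),\vol(V)-\vol(\cdot)\}$ yields, after subtracting from $\vol(V)\|\vec x\|_\infty=\int_0^{\|\vec x\|_\infty}\vol(V)\,dt$ and noting the integrand vanishes for $t\ge\|\vec x\|_\infty$,
\[
\vol(V)\|\vec x\|_\infty-g^L(\vec x)=\int_0^\infty\min\{\vol(A_t),\,\vol(V)-\vol(A_t)\}\,dt,\qquad A_t:=\{j:|x_j|>t\}.
\]

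For the remaining edge term $(|E(S_1,S_2)|)^L$ I would read off directly from \eqref{eq:fL} that the edge $i\sim j$ contributes to $f(V_{\sigma(k)}^+,V_{\sigma(k)}^-)$ precisely when $x_i,x_j$ have opposite signs and $\min\{|x_i|,|x_j|\}>|x_{\sigma(k)}|$, so its total contribution is $w_{ij}\min\{|x_i|,|x_j|\}$ if $x_ix_j<0$ and $0$ otherwise; using $|x_i|+|x_j|-|x_i+x_j|=2\min\{|x_i|,|x_j|\}$ for opposite signs (and $=0$ for equal signs) together with $\sum_{i<j}w_{ij}(|x_i|+|x_j|)=\|\vec x\|$, this sums to $(|E(S_1,S_2)|)^L=\tfrac12\bigl(\|\vec x\|-I^+(\vec x)\bigr)$. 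By linearity of $(\cdot)^L$ in its set-pair argument, $f^L(\vec x)=\|\vec x\|-I^+(\vec x)+2\hat I(\vec x)$, and the pointwise identity $\bigl||x_i|-|x_j|\bigr|=|x_i-x_j|+|x_i+x_j|-|x_i|-|x_j|$ (equivalently $\hat I(\vec x)=I(\vec x)+I^+(\vec x)-\|\vec x\|$) turns this into $f^L(\vec x)=2I(\vec x)-\|\vec x\|+I^+(\vec x)$, the numerator of \eqref{eq:tmp2}.

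The step I expect to be the real obstacle is identifying $\int_0^\infty\min\{\vol(A_t),\vol(V)-\vol(A_t)\}\,dt$ with $\min_{\alpha\in\mathbb{R}}\sum_{i=1}^n d_i\bigl||x_i|-\alpha\bigr|$. Since $|x_i|,\alpha\ge0$ one has the layer-cake identity $\bigl||x_i|-\alpha\bigr|=\int_0^\infty\bigl|\mathbf{1}[|x_i|>t]-\mathbf{1}[\alpha>t]\bigr|\,dt$, and the minimizing $\alpha$ may be taken $\ge0$; this gives $\sum_i d_i\bigl||x_i|-\alpha\bigr|=\int_0^\alpha\bigl(\vol(V)-\vol(A_t)\bigr)\,dt+\int_\alpha^\infty\vol(A_t)\,dt$, whose level-$t$ contribution is $\vol(V)-\vol(A_t)$ for $t<\alpha$ and $\vol(A_t)$ for $t\ge\alpha$. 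As $t\mapsto\vol(A_t)$ is non-increasing, the smaller of the two quantities $\vol(A_t)$ and $\vol(V)-\vol(A_t)$ is attained below one threshold and above it; choosing $\alpha$ to be the corresponding $(d_i)$-weighted median of $(|x_i|)$ realizes the pointwise minimum at every level, while no $\alpha$ can do better. Hence $g^L(\vec x)=\vol(V)\|\vec x\|_\infty-\min_\alpha\sum_i d_i\bigl||x_i|-\alpha\bigr|$, and \eqref{eq:tmp2} follows upon applying \eqref{eq:setpair-Lovasz-max} to $f$ and $g$. A little care is needed with ties in the ordering $\sigma$ (they change the relevant sets only on a measure-zero set of levels) and with the degenerate partition, but neither obstructs the argument.
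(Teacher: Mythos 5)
Your proposal is correct, and it follows the paper's overall strategy: encode an ordered $3$-partition by the pair $(A,B)$, express numerator and denominator as (differences of) set-pair Lov\'asz extensions, and invoke \eqref{eq:setpair-Lovasz-max}. The differences are in how the individual extensions are evaluated. The paper routes the numerator through $F_1(A,B)-F_2(A,B)$ with the Table~\ref{tab:L} entries $F_1^L=I$ and $F_2^L=\tfrac12\|\vec x\|-\tfrac12 I^+$, and the denominator through $G_1-G$ with $G(A,B)=\min\{\vol(A\cup B),\vol((A\cup B)^c)\}$, whose extension is identified as $G_3^L(\abs{\vec x})$ and computed by sorting plus a subgradient argument locating the weighted median (Eq.~\eqref{G3L}). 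You instead extend $\abs{E(S_1,S_2)}$ and $\abs{\partial(S_1\cup S_2)}$ directly — obtaining $\tfrac12(\|\vec x\|-I^+(\vec x))$ and $\hat I(\vec x)$, the latter via the observation that any function of $S_1\cup S_2$ has a coarea-type extension $\int_0^{\norm{\vec x}}h(\{j:|x_j|>t\})\,dt$ — and you treat the denominator by a layer-cake identity $\sum_i d_i\bigl||x_i|-\alpha\bigr|=\int_0^\alpha(\vol(V)-\vol(A_t))\,dt+\int_\alpha^\infty\vol(A_t)\,dt$ together with a weighted-median choice of $\alpha$, rather than reusing $G_3^L$. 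Both computations are sound and yield the same identities ($\hat I=I+I^+-\|\vec x\|$ then gives the stated numerator $2I-\|\vec x\|+I^+$); the paper's version is shorter here because it recycles table entries shared with Theorems \ref{th:max3cut} and \ref{th:anti-Cheeger}, while yours is more self-contained and is somewhat more explicit than the paper about the harmless exclusion of $(\varnothing,\varnothing)$ and the strict positivity of the denominator function required by Theorem \ref{th:set-pair-Lovasz}.
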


In order to give a complete answer to Question \ref{question:graphcut},
we propose an isomorphism to translate a $k$-cut ($k>3$) problem to a $3$-cut one on a graph of larger size, and then 
still utilize Theorem \ref{th:set-pair-Lovasz} to derive the corresponding continuous problem.
On the other hand, Theorem \ref{th:set-pair-Lovasz} also works for graph $2$-cut problems,
although it produces a different form from that by the original Lova\'sz extension,
during which Lemma \ref{lem:} plays a key role and translates a graph $2$-cut with symmetric form into a graph $3$-cut.
The main difference lies in the feasible region.



\begin{definition}[maxcut \cite{Karp1972}] 
\rm 
The maxcut problem is to determine a graph cut by solving
\begin{equation} \label{eq:maxcut}
h_{\max}(G) = \max_{S\subset V}\frac{2|\partial S|}{\vol(V)}.
\end{equation}
\end{definition}
\begin{definition}[Cheeger cut \cite{BuhlerRangapuramSetzerHein2013}]
The Cheeger problem is to determine a graph cut by solving
\[
h(G)=\min\limits_{S\subset V,S\not\in\{\varnothing,V\}} \frac{|\partial S|}{\min\{\vol(S),\vol(S^c)\}}.
\]
\end{definition}
\begin{definition}[anti-Cheeger cut \cite{Xu2016}]  
\rm 
The anti-Cheeger constant $\ah(G)$ is defined as
\begin{equation} \label{eq:antiCheeger}
\ah(G)=\max_{S\subset V}\frac{|\partial S|}{\max\{\vol(S),\vol(S^c)\}}.
\end{equation}
\end{definition}



%

The original Lov\'asz extension \eqref{eq:foL} (\videpost) yields the following equivalent continuous optimization problems: 
\begin{align}
\label{eq:maxcut-old} h_{\max}(G)& = \max_{\vec x\in \mathbb{R}^n_+\setminus\{\vec 0\}}\frac{I(\vec x)}{\vol(V)\max\limits_i x_i},\\
h(G)&=\inf_{\vec x\text{ nonconstant in } \mathbb{R}^n_+}\; \sup_{c\in \mathbb{R}}\frac{I(\vec x)}{\sum_{i=1}^n d_i|x_i-c|},\\
\ah(G)&=\max_{\vec x\in \mathbb{R}^n_+\setminus\{\vec 0\}}\frac{I(\vec x)}{2\vol(V)\max\limits_i x_i- \min\limits_{c\in \mathbb{R}}\sum_{i=1}^n d_i|x_i-c|}. \label{eq:anti-old}
\end{align}
In contrast, the proposed set-pair Lov\'asz extension is capable of enlarging the feasible region from 
the first quadrant $\mathbb{R}_+^n\setminus\{\vec 0\}$ in Eqs.~\eqref{eq:maxcut-old}-\eqref{eq:anti-old} to the entire space $\mathbb{R}^n\setminus\{\vec 0\}$ in Eqs.~\eqref{eq:maxcut-continuous}-\eqref{eq:antiCheeger-I(x)}.



\begin{theorem}
\label{th:maxcut}
\begin{equation}\label{eq:maxcut-continuous}
h_{\max}(G)=\max\limits_{\vec x\ne\vec0}\frac{I(\vec x)}{\vol(V)\norm{\vec x}}.
\end{equation}
\end{theorem}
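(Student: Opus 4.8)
\emph{Proof idea.} The plan is to realize $h_{\max}(G)$ as a set-pair ratio and then invoke Theorem~\ref{th:set-pair-Lovasz}. For a set-pair $(A,B)\in\power_2(V)$ put $C:=V\setminus(A\cup B)$ (so that $(A,B,C)$ is a genuine $3$-partition) and set
\begin{equation*}
g(A,B):=\vol(V),\qquad f(A,B):=2|E(A,B)|+|E(A,C)|+|E(B,C)|=2|E(A,B)|+|\partial(A\cup B)|.
\end{equation*}
These are nonnegative set-pair functions, and $g>0$ whenever $A\cup B\ne\varnothing$ provided $G$ carries at least one positively weighted edge (the complementary case being trivial).

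First I would show $\max_{(A,B)\ne(\varnothing,\varnothing)}f(A,B)/g(A,B)=h_{\max}(G)$. The key is an averaging identity: from $2|\partial A|=2|E(A,B)|+2|E(A,C)|$ and $2|\partial B|=2|E(A,B)|+2|E(B,C)|$ one gets $f(A,B)=\tfrac12\bigl(2|\partial A|+2|\partial B|\bigr)$, hence $f(A,B)\le\max_{S\subset V}2|\partial S|=\vol(V)\,h_{\max}(G)$ for every set-pair; and the choice $B=A^c$ (so $C=\varnothing$), together with $\partial A=\partial A^c$, gives $f(A,A^c)=2|\partial A|$, so the bound is attained. Since $f(\varnothing,\varnothing)=0$, discarding that pair is harmless, and the identity follows.

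Next I would compute the two set-pair Lov\'asz extensions via Definition~\ref{def:Lovasz-pair}. For the constant $g\equiv\vol(V)$, the sum in~\eqref{eq:fL} telescopes: $g^L(\vec x)=\vol(V)\bigl(|x_{\sigma(n)}|-|x_{\sigma(0)}|\bigr)=\vol(V)\norm{\vec x}$, using $x_{\sigma(0)}=x_0=0$. For $f$, I would use that $f\mapsto f^L$ is additive, split $f=f_1+f_2$ with $f_1(A,B)=|\partial(A\cup B)|$ and $f_2(A,B)=2|E(A,B)|$, and track the contribution of each edge $p\sim q$ as the threshold $|x_{\sigma(i)}|$ increases. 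The edge contributes to $f_1$ precisely on the levels where exactly one of $p,q$ lies in $V_{\sigma(i)}^+\cup V_{\sigma(i)}^-$, i.e.\ where $\min(|x_p|,|x_q|)\le|x_{\sigma(i)}|<\max(|x_p|,|x_q|)$, so after telescoping $f_1^L(\vec x)=\sum_{i<j}w_{ij}\bigl||x_i|-|x_j|\bigr|=\hat I(\vec x)$; it contributes to $f_2$ only when $x_px_q<0$ and $|x_{\sigma(i)}|<\min(|x_p|,|x_q|)$, whence $f_2^L(\vec x)=2\sum_{i<j,\,x_ix_j<0}w_{ij}\min(|x_i|,|x_j|)$. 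Adding these edgewise, the total for an edge with equal-sign endpoints (or one endpoint zero) is $\bigl||x_i|-|x_j|\bigr|=|x_i-x_j|$, and for opposite-sign endpoints it is $\bigl||x_i|-|x_j|\bigr|+2\min(|x_i|,|x_j|)=|x_i|+|x_j|=|x_i-x_j|$; summing over edges gives $f^L(\vec x)=I(\vec x)$.

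Finally, Theorem~\ref{th:set-pair-Lovasz} applied to $(f,g)$ yields
\begin{equation*}
h_{\max}(G)=\max_{(A,B)\ne(\varnothing,\varnothing)}\frac{f(A,B)}{g(A,B)}=\max_{\vec x\ne\vec0}\frac{f^L(\vec x)}{g^L(\vec x)}=\max_{\vec x\ne\vec0}\frac{I(\vec x)}{\vol(V)\norm{\vec x}},
\end{equation*}
which is the claim. The telescoping and the edgewise bookkeeping are routine; the step I expect to be the main obstacle is the discovery of the right surrogate $f$. The naive choice $2|E(A,B)|$ does \emph{not} have Lov\'asz extension $I(\vec x)$; one must add the boundary term $|\partial(A\cup B)|$ --- morally, to view the maxcut as a three-valued cut with labels $+1$ on $A$, $0$ on $C$, $-1$ on $B$, whose total-variation-type extension is exactly $\sum_{i<j}w_{ij}|x_i-x_j|$ --- and then to spot the averaging identity $f(A,B)=\tfrac12(2|\partial A|+2|\partial B|)$ that makes the set-pair maximum collapse back onto the ordinary maxcut.
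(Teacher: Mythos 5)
Your proposal is correct, and its skeleton is the same as the paper's: the numerator you build is literally $F_1(A,B)=|\partial A|+|\partial B|$ (your identity $2|E(A,B)|+|\partial(A\cup B)|=|\partial A|+|\partial B|$ makes this explicit) and your denominator is $G_1(A,B)=\vol(V)$, and the final step is the same application of Theorem \ref{th:set-pair-Lovasz}. The two places where you deviate are both sound and only mildly different in route. First, for the combinatorial reduction the paper invokes Lemma \ref{lem:} with the symmetric set functions $f(A)=|\partial A|$, $g(A)=\tfrac12\vol(V)$, whereas you prove that step directly via the mediant-type bound $|\partial A|+|\partial B|\le 2\max_S|\partial S|$ together with equality at $(A,A^c)$; since the denominator is constant, this is exactly the constant-denominator special case of the lemma, so you lose no generality here but also gain nothing beyond self-containedness. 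Second, for the extension computation the paper evaluates $F_1^L=I(\vec x)$ in one stroke from the integral form (Proposition \ref{def:Lovasz2}, as done for Table \ref{tab:L}), while you split $f$ into $|\partial(A\cup B)|$ and $2|E(A,B)|$, whose extensions are $\hat I(\vec x)$ and $\|\vec x\|-I^+(\vec x)$ respectively, and recombine using $I(\vec x)=\|\vec x\|+\hat I(\vec x)-I^+(\vec x)$ --- the same identity the paper uses in the proof of Theorem \ref{th:max3cut}; your edgewise bookkeeping (same-sign edges contribute $\bigl||x_i|-|x_j|\bigr|$, opposite-sign edges $|x_i|+|x_j|$) is correct. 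The only caveat, which you already flag, is the harmless standing assumption $\vol(V)>0$.
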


\begin{theorem}
\label{th:cheeger_full}
\begin{equation}
h(G)=\inf_{\vec x \text{ nonconstant}} \sup_{c\in \mathbb{R}}\frac{I(\vec x)}{\sum_{i=1}^n d_i|x_i-c|}.
\end{equation}
\end{theorem}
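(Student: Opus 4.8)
The plan is to derive Theorem~\ref{th:cheeger_full} from Theorem~\ref{th:set-pair-Lovasz}, by first rewriting the Cheeger cut---a \emph{symmetric} $2$-cut, since $|\partial S|=|\partial S^c|$---as an optimization over set pairs via Lemma~\ref{lem:}, and then computing the two set-pair Lov\'asz extensions that appear. Concretely, I would work with
\[
f(A,B)=|\partial A|+|\partial B|,\qquad g(A,B)=\min\{\vol(A),\vol(A^c)\}+\min\{\vol(B),\vol(B^c)\},
\]
and show that $h(G)=\min\frac{f(A,B)}{g(A,B)}$, the minimum taken over those $(A,B)\in\power_2(V)$ with $g(A,B)>0$. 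The inequality ``$\le$'' is immediate on restricting to $B=A^c$, where $\frac{f(A,A^c)}{g(A,A^c)}=\frac{|\partial A|}{\min\{\vol(A),\vol(A^c)\}}$; the reverse inequality holds because $|\partial A|\ge h(G)\min\{\vol(A),\vol(A^c)\}$ and $|\partial B|\ge h(G)\min\{\vol(B),\vol(B^c)\}$ for \emph{all} $A,B\subset V$ (trivially when $A$ or $B$ is $\varnothing$ or $V$), so summing them gives $f(A,B)\ge h(G)\,g(A,B)$. This is exactly the reduction furnished by Lemma~\ref{lem:} specialized to the Cheeger ratio; note that $g(A,B)=0$ occurs only at the three pairs $(\varnothing,\varnothing)$, $(V,\varnothing)$, $(\varnothing,V)$.

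Next I would appeal to Theorem~\ref{th:set-pair-Lovasz}. Its hypothesis ``$g(A,B)>0$ whenever $A\cup B\ne\varnothing$'' fails at $(V,\varnothing)$ and $(\varnothing,V)$; but these two pairs, together with $(\varnothing,\varnothing)$, are precisely the set pairs occurring in \eqref{eq:fL} when $\vec x$ is constant (and $\vec x=\vec 0$), and $g^L$ vanishes exactly on the constant vectors, so the proof of Theorem~\ref{th:set-pair-Lovasz} applies once ``$\vec x\ne\vec 0$'' is replaced throughout by ``$\vec x$ nonconstant''. This yields $h(G)=\inf_{\vec x\text{ nonconstant}}\frac{f^L(\vec x)}{g^L(\vec x)}$, and it remains to identify the two extensions.

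For this I would read \eqref{eq:fL} as a telescoping integral, $f^L(\vec x)=\int_0^\infty f(\{j:x_j>t\},\{j:x_j<-t\})\,dt$ (valid for any set-pair function vanishing at $(\varnothing,\varnothing)$, as do both $f$ and $g$ above). For $f$ this is $\int_0^\infty\!\big(|\partial\{x>t\}|+|\partial\{x<-t\}|\big)\,dt=\int_{-\infty}^\infty|\partial\{x>t\}|\,dt=I(\vec x)$, where the middle equality uses $|\partial S|=|\partial S^c|$ to fold the negative half-line onto the positive one (the standard coarea formula for $I$). For $g$, linearity of $f\mapsto f^L$ together with the same formula gives $g^L(\vec x)=\int_{-\infty}^\infty\min\{\vol\{x>t\},\vol\{x\le t\}\}\,dt$; if $c^*$ is a weighted median of $\vec x$, characterized by $\vol\{x>c^*\}\le\frac12\vol(V)$ and $\vol\{x<c^*\}\le\frac12\vol(V)$, then the integrand equals $\vol\{x>t\}$ for $t>c^*$ and $\vol\{x\le t\}$ for $t<c^*$, whence $g^L(\vec x)=\sum_i d_i|x_i-c^*|=\min_{c\in\mathbb{R}}\sum_i d_i|x_i-c|$. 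Thus for nonconstant $\vec x$, $\frac{f^L(\vec x)}{g^L(\vec x)}=\frac{I(\vec x)}{\min_c\sum_i d_i|x_i-c|}=\sup_{c\in\mathbb{R}}\frac{I(\vec x)}{\sum_i d_i|x_i-c|}$, which combined with the previous display is precisely Theorem~\ref{th:cheeger_full}.

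I expect the crux to be the identification $g^L(\vec x)=\min_c\sum_i d_i|x_i-c|$: this is where something non-formal enters, namely the balance characterization of weighted medians, and it is the place most likely to require care in the non-strict (equal values) and zero-degree cases. The surrounding bookkeeping---the equivalence ``$g^L(\vec x)>0\iff\vec x$ nonconstant'' and the attendant adaptation of Theorem~\ref{th:set-pair-Lovasz} to drop the degenerate pairs $(V,\varnothing),(\varnothing,V)$---is routine but cannot be skipped, since the theorem is not applied verbatim. (As a shortcut one could instead observe that $\sup_c\frac{I(\vec x)}{\sum_i d_i|x_i-c|}$ is invariant under $\vec x\mapsto\vec x+t\vec 1$, so the infimum over nonconstant $\vec x\in\mathbb{R}^n$ equals the one over $\mathbb{R}^n_+$ and hence $h(G)$ by the classical first-quadrant formula; the route sketched above is the one that stays within the set-pair framework of the present paper.)
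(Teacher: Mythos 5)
Your proposal is correct and follows essentially the same route as the paper: symmetrize the Cheeger ratio into the set-pair form $F_1(A,B)=|\partial A|+|\partial B|$, $G_3(A,B)=\min\{\vol(A),\vol(A^c)\}+\min\{\vol(B),\vol(B^c)\}$ (the content of Lemma~\ref{lem:}), pass to nonconstant vectors via the variant of Theorem~\ref{th:set-pair-Lovasz} recorded as Proposition~\ref{pro2}, and identify $F_1^L=I$ and $G_3^L(\vec x)=\min_{c\in\mathbb{R}}\sum_i d_i|x_i-c|$, which is exactly the paper's Eq.~\eqref{G3L}. The only differences are cosmetic: you verify the symmetrization, the nonconstant adaptation, and the weighted-median computation directly rather than citing Lemma~\ref{lem:}, Proposition~\ref{pro2} and Table~\ref{tab:L}.
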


\begin{theorem}
\label{th:anti-Cheeger}
\begin{equation} \label{eq:antiCheeger-I(x)}
\ah(G)=\max_{\vec x\ne \vec0}\frac{I(\vec x)}{2\vol(V)\|\vec x\|_\infty - \min\limits_{\alpha\in \mathbb{R}}\|\vec x-\alpha \vec 1\| }.
\end{equation}
\end{theorem}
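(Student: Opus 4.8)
\emph{Proof strategy.} The plan is to apply the set-pair Lov\'asz extension (Theorem~\ref{th:set-pair-Lovasz}) to the two set-pair functions
$$f(A,B)=|\partial A|+|\partial B|,\qquad g(A,B)=\max\{\vol(A),\vol(A^c)\}+\max\{\vol(B),\vol(B^c)\},$$
both mapping $\power_2(V)$ into $[0,+\infty)$ with $g(A,B)\ge\vol(V)>0$ always. First I would establish the discrete identity $\ah(G)=\max_{(A,B)\ne(\varnothing,\varnothing)}f(A,B)/g(A,B)$: for ``$\ge$'' evaluate at a symmetric pair $(S,S^c)$, for which $f(S,S^c)/g(S,S^c)=|\partial S|/\max\{\vol(S),\vol(S^c)\}$; for ``$\le$'' note that, by the very definition of $\ah(G)$, $|\partial A|\le\ah(G)\max\{\vol(A),\vol(A^c)\}$ and $|\partial B|\le\ah(G)\max\{\vol(B),\vol(B^c)\}$ hold for every pair, so adding these and dividing by $g(A,B)$ gives $f(A,B)/g(A,B)\le\ah(G)$. (This is exactly the ``symmetric $2$-cut realised as a $3$-cut'' step announced before Theorem~\ref{th:maxcut}.)

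Next I would identify the two set-pair Lov\'asz extensions. For $f$: the layer-cake formula $|x_p-x_q|=\int_{\mathbb R}\bigl|\mathbf{1}_{\{x_p>t\}}-\mathbf{1}_{\{x_q>t\}}\bigr|\,dt$ gives $I(\vec x)=\int_{\mathbb R}|\partial\{j:x_j>t\}|\,dt$; splitting at $t=0$, using $\partial S=\partial S^c$, and observing that $\{j:x_j>t\}$ for $t\ge0$ runs through the sets $V_{\sigma(i)}^+$ and $\{j:x_j\le t\}$ for $t\le0$ runs (up to a null set of $t$) through the sets $V_{\sigma(i)}^-$, one obtains $f^L(\vec x)=\sum_{i=0}^{n-1}(|x_{\sigma(i+1)}|-|x_{\sigma(i)}|)\bigl(|\partial V_{\sigma(i)}^+|+|\partial V_{\sigma(i)}^-|\bigr)=I(\vec x)$, the same computation used for Theorem~\ref{th:maxcut}. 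For $g$: write $g(A,B)=2\vol(V)-\min\{\vol(A),\vol(A^c)\}-\min\{\vol(B),\vol(B^c)\}$ and use that $(\cdot)^L$ is linear in the set-pair function. The constant $2\vol(V)$ contributes $2\vol(V)\|\vec x\|_\infty$. The function $m_+(A,B):=\min\{\vol(A),\vol(A^c)\}$ depends only on $A$, and the sets $V_{\sigma(i)}^+$ are precisely the super-level sets at nonnegative levels of $\vec{x}^{+}:=\max(\vec x,\vec 0)$ (taken componentwise); telescoping the increments gives $m_+^L(\vec x)=\int_0^\infty\min\{\vol\{x_j>t\},\vol\{x_j\le t\}\}\,dt=\mu^L(\vec{x}^{+})$, where $\mu(S)=\min\{\vol(S),\vol(S^c)\}$ and $\mu^L$ denotes the ordinary one-input Lov\'asz extension; symmetrically $m_-^L(\vec x)=\mu^L(\vec{x}^{-})$ with $\vec{x}^{-}:=\max(-\vec x,\vec 0)$. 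It is classical (and implicit in the derivation of the Cheeger formula of Theorem~\ref{th:cheeger_full}) that $\mu^L(\vec y)=\min_{\beta\in\mathbb R}\sum_i d_i|y_i-\beta|$ for $\vec y\ge\vec 0$.

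The remaining step --- which I expect to be the main obstacle --- is to show $\mu^L(\vec{x}^{+})+\mu^L(\vec{x}^{-})=\min_{\alpha\in\mathbb R}\|\vec x-\alpha\vec 1\|$. Using $|x_i-\alpha|=|x_i^{+}-\alpha^{+}|+|x_i^{-}-\alpha^{-}|$ and splitting the minimisation over $\alpha$ according to the sign of $\alpha$, one gets $\min_\alpha\|\vec x-\alpha\vec 1\|=\min\{\,\mu^L(\vec{x}^{+})+\|\vec{x}^{-}\|,\;\|\vec{x}^{+}\|+\mu^L(\vec{x}^{-})\,\}$ (here $\|\vec y\|=\sum_i d_i|y_i|$). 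Since $\{i:x_i\ge0\}\cup\{i:x_i\le0\}=V$, at least one of these two vertex sets has volume $\ge\vol(V)/2$, so $\beta=0$ is a weighted median there and hence $\mu^L(\vec{x}^{+})=\|\vec{x}^{+}\|$ or $\mu^L(\vec{x}^{-})=\|\vec{x}^{-}\|$; in either case the two quantities inside the last minimum coincide and equal $\mu^L(\vec{x}^{+})+\mu^L(\vec{x}^{-})$. Assembling the pieces yields $g^L(\vec x)=2\vol(V)\|\vec x\|_\infty-\min_\alpha\|\vec x-\alpha\vec 1\|$, and then \eqref{eq:setpair-Lovasz-max} of Theorem~\ref{th:set-pair-Lovasz} gives $\ah(G)=\max_{(A,B)\ne(\varnothing,\varnothing)}f(A,B)/g(A,B)=\max_{\vec x\ne\vec 0}f^L(\vec x)/g^L(\vec x)$, which is the assertion. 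The two points to handle carefully are the bookkeeping of the increments $|x_{\sigma(i+1)}|-|x_{\sigma(i)}|$ when reducing the set-pair extension of $m_\pm$ to the one-input extension of $\vec{x}^{\pm}$ (a negative coordinate of $\vec x$ vanishes in $\vec{x}^{+}$ yet still produces a breakpoint of $|\vec x|$), and the volume dichotomy that forces the two candidate minimisers to agree.
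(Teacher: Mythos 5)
Your proposal is correct and follows the paper's overall route: the same set-pair functions $F_1(A,B)=\abs{\partial A}+\abs{\partial B}$ and $G(A,B)=2\vol(V)-\min\{\vol(A),\vol(A^c)\}-\min\{\vol(B),\vol(B^c)\}$, the same reduction of the symmetric $2$-cut to a set-pair ratio (your direct two-inequality argument is an equivalent, slightly more economical substitute for the paper's Lemma~\ref{lem:}), and the same invocation of \eqref{eq:setpair-Lovasz-max}. Where you genuinely diverge is in computing $G_3^L$: the paper merges the two one-sided integrals from Proposition~\ref{def:Lovasz2} into the single integral $\int_{-\|\vec x\|_\infty}^{\|\vec x\|_\infty}\min\{\vol(V_t^+(\vec x)),\vol(V_t^+(\vec x)^c)\}\,dt$ and evaluates it by sorting the entries of $\vec x$ and exhibiting a weighted median, arriving at \eqref{G3L}; you instead split $G_3^L(\vec x)=\mu^L(\vec{x}^{+})+\mu^L(\vec{x}^{-})$ with $\mu(S)=\min\{\vol(S),\vol(S^c)\}$ and recombine through the identity $|x_i-\alpha|=|x_i^{+}-\alpha^{+}|+|x_i^{-}-\alpha^{-}|$ together with a volume dichotomy. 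Both computations are valid; the paper's is self-contained (it proves the weighted-median fact that you quote as classical), while yours reuses the one-input extension and isolates the sign-splitting as the only new ingredient. One small repair is needed in your last step: the two candidates $\mu^L(\vec{x}^{+})+\|\vec{x}^{-}\|$ and $\|\vec{x}^{+}\|+\mu^L(\vec{x}^{-})$ need not coincide (take all $x_i<0$, for instance); what your dichotomy actually yields is that one of them equals $\mu^L(\vec{x}^{+})+\mu^L(\vec{x}^{-})$ while the other dominates it, since $\mu^L(\vec y)\le\|\vec y\|$ (choose $\beta=0$), so the minimum still equals $\mu^L(\vec{x}^{+})+\mu^L(\vec{x}^{-})$ and your conclusion stands.
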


\begin{remark}\rm
Comparing \eqref{eq:max3cut-continuous} to \eqref{eq:maxcut-continuous}, the continuous objective function for max 3-cut happens to only add a nonnegative term $\hat{I}(\vec x)$ to the numerator. Such slight formal discrepancy may imply some deep connections between maxcut and max 3-cut which deserves more efforts to explore.
\end{remark}


\tikzstyle{startstop} = [rectangle, rounded corners, minimum width=1cm, minimum height=1cm,text centered, draw=black, fill=red!30]
\tikzstyle{io1} = [rectangle, trapezium left angle=80, trapezium right angle=100, minimum width=1cm, minimum height=1cm, text centered, draw=black, fill=blue!30]
\tikzstyle{io2} = [rectangle, trapezium left angle=80, trapezium right angle=100, minimum width=1cm, minimum height=1cm, text centered, draw=black, fill=yellow!10]
\tikzstyle{process} = [rectangle, minimum width=1cm, minimum height=1cm, text centered, text width=2.2cm, draw=black, fill=orange!0]
\tikzstyle{process1} = [rectangle, minimum width=1cm, minimum height=1cm, text centered,
text width=2.2cm,draw=black, fill=green!0]
\tikzstyle{process2} = [rectangle, minimum width=1cm, minimum height=1cm, text centered,
text width=2.2cm, draw=black, fill=yellow!0]
\tikzstyle{decision} = [diamond, minimum width=1cm, minimum height=1cm, text centered, draw=black, fill=green!0]
\tikzstyle{arrow} = [thick,->,>=stealth]

In a word, we may summarize the research line for the graph cut problems into the picture:
\begin{center}
\begin{tikzpicture}[node distance=4.8cm]
\node (1) [process] {combinatorial optimization};
\node (2) [process1, right of=1] {continuous optimization};
\node (3) [process2, right of=2] {spectral theory};
\draw [arrow](1) --node[anchor=south]{\small Lov\'asz}  (2);
\draw [arrow](2) --node[anchor=north]{\small extension}  (1);
\draw [arrow](2) --node[anchor=south]{\small critical point}  (3);
\draw [arrow](3) --node[anchor=north]{\small theory}  (2);
\end{tikzpicture}
\end{center}


The Lov\'asz extension and its variants provide a systematic way to find explicit equivalent continuous optimization problems for discrete and combinatorial optimization ones.
On the practical side, new possibilities immediately open up for designing continuous optimization algorithms for combinatorial problems. Several preliminary attempts have been tried \cite{HeinBuhler2010,Trevisan2012,ChangShaoZhang2015,ChangShaoZhang2016-maxcut,ChangShaoZhangZhang2018-a} and more efforts are highly called for algorithm research.  On the other hand,
explicit objective functions of resulting continuous optimizations provide the possibility to develop and enrich the spectral graph theory \cite{Chung1997} as already did for the Cheeger cut \cite{Chang2015,ChangShaoZhang2015,ChangShaoZhang2017}
and the dual Cheeger cut \cite{ChangShaoZhang2016-maxcut}.





The rest of the paper is organized as follows.
Section \ref{sec:lovasz} collects basic properties of the Lov\'asz extension including
both continuity and convexity, and shows that the set-pair Lov\'asz extension
may be superior over the original one. Such superiority is further demonstrated in
Section \ref{sec:app} by applying it into typical graph $k$-cut problems.


\bigskip

{\bf Acknowledgements.}
This research was supported by the National Natural Science Foundation of China (Nos.~11822102, 11421101).
SS is partially supported by Beijing Academy of Artificial Intelligence (BAAI).
DZ is partially supported by the Project funded by China Postdoctoral Science Foundation (No. BX201700009).

\section{Set-pair Lov\'asz extension}
\label{sec:lovasz}


\begin{definition}[Lov\'asz extension \cite{Lovasz1983}]
\rm \label{def:originLovasz}
Let $V=\{1,\ldots,n\}\subset \mathbb{N}$.
For $\vec x\in \mathbb{R}^n$, let $\sigma:V\cup\{0\}\to V\cup\{0\}$ be a bijection such that $ x_{\sigma(1)}\le x_{\sigma(2)} \le \cdots\le x_{\sigma(n)}$ and $\sigma(0)=0$, where $x_0:=0$. One defines the sets
$$V_{\sigma(i)}:=\{j\in V: x_{j}> x_{\sigma(i)}\},\;\;\;\; i=1,\ldots,n-1,\; V_0=V.$$
Let 
\begin{equation}
\power(V)=\{A:A\subset V\}.
\end{equation} 
Given $f:\power(V)\to [0,+\infty)$, the {Lov\'asz extension} of $f$ is a mapping from $\mathbb{R}^n$ to $\mathbb{R}$ defined by
\begin{equation}\label{eq:foL}
f_o^{L}(\vec x)=\sum_{i=0}^{n-1}(x_{\sigma(i+1)}-x_{\sigma(i)})f(V_{\sigma(i)}).
\end{equation}
\end{definition}

\begin{theorem}[\cite{BuhlerRangapuramSetzerHein2013}, Theorem 1]
\label{th:Lovasz}
\rm
Assume that $f,g:\power (V)\to [0,+\infty)$ are two functions with $g(A)>0$ whenever $A\ne\varnothing$, then there hold both
\begin{equation}\label{eq:Lovasz-min}
\min\limits_{A\in \power(V)\setminus\{\varnothing\}}\frac{f(A)}{g(A)} =\min\limits_{\vec   x\in \mathbb{R}_+^n\setminus\{\vec 0\}}\frac{f_o^L( \vec  x)}{g_o^L( \vec  x)},
\end{equation}
and
\begin{equation}\label{eq:Lovasz-max}
\max\limits_{A\in \power(V)\setminus\{\varnothing\}}\frac{f(A)}{g(A)} =\max\limits_{\vec   x\in \mathbb{R}_+^n\setminus\{\vec 0\}}\frac{f_o^L( \vec  x)}{g_o^L( \vec  x)}.
\end{equation}
\end{theorem}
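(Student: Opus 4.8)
The plan is to establish both \eqref{eq:Lovasz-min} and \eqref{eq:Lovasz-max} by the standard two-sided comparison: test the continuous quotient on the indicator vectors $\mathbf{1}_A$ to get one inequality, and read off the reverse from the fact that $f_o^L(\vec x)$ is a nonnegative combination of the numbers $f(V_{\sigma(i)})$.

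First I would isolate the structural facts about $f_o^L$ on the nonnegative orthant. Fix $\vec x\in\mathbb{R}_+^n$ with an associated bijection $\sigma$ as in Definition \ref{def:originLovasz}, and write $\lambda_i:=x_{\sigma(i+1)}-x_{\sigma(i)}\ge 0$ and $A_i:=V_{\sigma(i)}$ for $i=0,\dots,n-1$, so that $f_o^L(\vec x)=\sum_{i=0}^{n-1}\lambda_i f(A_i)$ and $g_o^L(\vec x)=\sum_{i=0}^{n-1}\lambda_i g(A_i)$. Three elementary observations carry the proof: (a) $\sum_{i=0}^{n-1}\lambda_i=x_{\sigma(n)}-x_{\sigma(0)}=\|\vec x\|_\infty$; (b) whenever $\lambda_i>0$ the set $A_i$ is nonempty, since $\lambda_i>0$ forces $x_{\sigma(i)}<x_{\sigma(i+1)}\le x_{\sigma(n)}$ and hence $\sigma(i+1),\dots,\sigma(n)\in A_i$, while for the exceptional index $i=0$ one has $\lambda_0=x_{\sigma(1)}>0$ only if every coordinate is positive, in which case $\{j:x_j>0\}=V=A_0$, so the convention $V_0=V$ is harmless; (c) the extension property $f_o^L(\mathbf{1}_A)=f(A)$ and $g_o^L(\mathbf{1}_A)=g(A)$ holds for every nonempty $A\subseteq V$, because at $\vec x=\mathbf{1}_A$ all jumps $\lambda_i$ vanish apart from the single jump from level $0$ to level $1$, whose superlevel set is precisely $A$.

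Given (a)--(c) the rest is short. For one inequality, fix a nonempty $A$ and evaluate the continuous quotient at $\vec x=\mathbf{1}_A\in\mathbb{R}_+^n\setminus\{\vec 0\}$: since $g(A)>0$ by hypothesis, (c) gives $f_o^L(\mathbf{1}_A)/g_o^L(\mathbf{1}_A)=f(A)/g(A)$, so the continuous minimum is at most $f(A)/g(A)$; minimizing over $A$ shows the continuous minimum is at most the discrete one, and symmetrically the continuous maximum is at least the discrete one. For the reverse, fix $\vec x\in\mathbb{R}_+^n\setminus\{\vec 0\}$. By (a) some $\lambda_i>0$, and by (b) the corresponding $A_i$ is nonempty, so $g_o^L(\vec x)\ge\lambda_i g(A_i)>0$ and the quotient makes sense; moreover only indices with $\lambda_i>0$ contribute to either sum, and for those $g(A_i)>0$. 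Hence, with $c:=\min_{A\ne\varnothing}f(A)/g(A)$, we have $f(A_i)\ge c\,g(A_i)$ for every contributing $i$, so $f_o^L(\vec x)\ge c\,g_o^L(\vec x)$, i.e.\ $f_o^L(\vec x)/g_o^L(\vec x)\ge c$; taking the infimum over $\vec x$ and combining with the first inequality yields \eqref{eq:Lovasz-min}, the minimum on the right being attained at $\mathbf{1}_{A^\ast}$ for a minimizing $A^\ast$. Replacing $c$ by $C:=\max_{A\ne\varnothing}f(A)/g(A)$ and reversing all inequalities gives \eqref{eq:Lovasz-max}.

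I expect the only genuinely delicate point to be the bookkeeping around degenerate configurations --- ties among the entries of $\vec x$, and zero coordinates interacting with the convention $V_0=V$ --- where one must check that $\varnothing$ never occurs as an $A_i$ carrying positive weight $\lambda_i$ and that $A_0=V$ does not corrupt the decomposition; observation (b) is exactly this check. It is also worth noting why the statement is confined to $\mathbb{R}_+^n\setminus\{\vec 0\}$ rather than all of $\mathbb{R}^n$: for a general $\vec x$ the pinned entry $x_{\sigma(0)}=x_0=0$ need not satisfy $x_0\le x_{\sigma(1)}$, so $\lambda_0$ may be negative and the nonnegative-combination structure underpinning (b) and the quotient bound breaks down --- precisely the gap that the set-pair extension of Theorem \ref{th:set-pair-Lovasz} is designed to close.
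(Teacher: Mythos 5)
Your proof is correct, and it follows essentially the same route the paper uses for its set-pair analogue (Theorem \ref{th:set-pair-Lovasz}) and that the cited reference uses for this statement: evaluate the quotient at indicator vectors via $f_o^L(\vec 1_A)=f(A)$ for one inequality, and for the reverse exploit that on $\mathbb{R}_+^n$ the extension is a nonnegative combination of values on nonempty superlevel sets, so the ratio is bounded termwise by the discrete optimum (the paper phrases this via the nonnegative quantities $\Pi_i$ rather than your constant $c$, a cosmetic difference). Your closing remark on why $\lambda_0$ can go negative outside $\mathbb{R}_+^n$ matches the paper's own remark on the necessity of the constraint $\vec x\in\mathbb{R}_+^n$.
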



\begin{remark}\rm
The proof of Theorem \ref{th:Lovasz} (see \cite{BuhlerRangapuramSetzerHein2013}) heavily depends on the non-negativity of the terms $x_{\sigma(1)}f(V_{\sigma(1)})$ and $(x_{\sigma(i+1)}-x_{\sigma(i)})f(V_{\sigma(i)})$ in the summation form \eqref{eq:foL}, $i=1,\ldots,n-1$, thereby indicating 
that one needs the constraint $x_{\sigma(1)}\ge0$, i.e., 
$\vec x\in \mathbb{R}_+^n$.
The integral form \eqref{eq:Lovasz-inte} in Proposition \ref{def:Lovasz-inte} also manifests clearly such dependence 
through the last term. Indeed, the minor change of Eq.~\eqref{eq:maxcut-old}:
\begin{align*}
-\infty=\inf_{\vec x\ne \vec 0}\frac{I(\vec x)}{\vol(V)\max\limits_i x_i}<&\min_{S\subset V}\frac{2|\partial S|}{\vol(V)} \\\
\le& \max_{S\subset V}\frac{2|\partial S|}{\vol(V)} < \sup_{\vec x\ne \vec 0}\frac{I(\vec x)}{\vol(V)\max\limits_i x_i}=+\infty,
\end{align*}
shows an example in which Theorem \ref{th:Lovasz} fails if we naively replace $\mathbb{R}_+^n\setminus\{\vec 0\}$ by $\mathbb{R}^n\setminus\{\vec 0\}$ in Eqs.~\eqref{eq:Lovasz-min} and \eqref{eq:Lovasz-max}.
Fortunately, as the fruitful results and discussions in this section, we can enlarge the feasible region $\mathbb{R}_+^n\setminus\{\vec 0\}$ to $\mathbb{R}^n\setminus\{\vec 0\}$ using the proposed set-pair analog of Lov\'asz extension.
\end{remark}

\begin{prop}[\cite{Bach2013}, Definition 3.1]
\label{def:Lovasz-inte}
\begin{equation}\label{eq:Lovasz-inte}
f_o^{L}(\vec x)=\int_{\min_{1\le i\le n}x_i}^{\max_{1\le i\le n}x_i} f(V_t)d t+f(V)\min_{1\le i\le n}x_i,
\end{equation}
where $V_t(\vec x)=\{i\in V:  x_i>t\}$.
\end{prop}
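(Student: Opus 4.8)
The plan is to evaluate the right-hand side of \eqref{eq:Lovasz-inte} directly and match it, summand by summand, with the defining expression \eqref{eq:foL}. First I would reduce to the case where $\vec x$ is already ordered: after relabelling the vertices we may take $\sigma$ to restrict to the identity on $V$, so that $x_1\le x_2\le\cdots\le x_n$, $V_i=\{j\in V:x_j>x_i\}$ for $i=1,\dots,n-1$, and $V_0=V$. Both sides of \eqref{eq:Lovasz-inte} transform the same way under such a relabelling (replace $f$ by $S\mapsto f(\sigma(S))$), so no generality is lost; it is also worth recording at this point that $f_o^L(\vec x)$ does not depend on the particular ordering chosen when coordinates coincide, since every summand with $x_{\sigma(i)}=x_{\sigma(i+1)}$ carries the vanishing coefficient $x_{\sigma(i+1)}-x_{\sigma(i)}$, while $V_{\sigma(i)}$ depends only on the value $x_{\sigma(i)}$.

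Next I would describe the step function $t\mapsto f(V_t)$, with $V_t=\{i\in V:x_i>t\}$, on the interval $[\min_i x_i,\max_i x_i]=[x_1,x_n]$. For $1\le i\le n-1$ and $t$ in the open interval $(x_i,x_{i+1})$ — which is empty exactly when $x_i=x_{i+1}$ — no coordinate of $\vec x$ lies strictly between $x_i$ and $x_{i+1}$, hence $\{j:x_j>t\}=\{j:x_j>x_i\}=V_i$ and $f(V_t)=f(V_i)$ there. Splitting $[x_1,x_n]$ at the points $x_1,\dots,x_n$ therefore gives
\[
\int_{\min_i x_i}^{\max_i x_i} f(V_t)\,dt=\sum_{i=1}^{n-1}\int_{x_i}^{x_{i+1}} f(V_t)\,dt=\sum_{i=1}^{n-1}(x_{i+1}-x_i)\,f(V_i),
\]
and each equality survives ties, because a degenerate interval $[x_i,x_{i+1}]=\{x_i\}$ contributes $0$ to the integral while its matching summand $(x_{i+1}-x_i)f(V_i)$ is also $0$. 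Finally I would add in the $i=0$ term of \eqref{eq:foL}: since $\sigma(0)=0$, $x_0:=0$ and $V_0=V$, this term equals $(x_{\sigma(1)}-x_0)f(V_0)=x_1 f(V)=f(V)\min_{1\le i\le n}x_i$, and combining with the previous display yields
\[
f(V)\min_{1\le i\le n}x_i+\int_{\min_i x_i}^{\max_i x_i} f(V_t)\,dt=\sum_{i=0}^{n-1}(x_{i+1}-x_i)\,f(V_i)=f_o^L(\vec x),
\]
which is exactly \eqref{eq:Lovasz-inte}.

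I do not anticipate a real obstacle: the argument is essentially a change of viewpoint turning the telescoping sum \eqref{eq:foL} into an integral of the level-set function $t\mapsto f(V_t)$, and the only delicate point — coordinates sharing a value — is absorbed by the remark that such terms and the corresponding degenerate intervals contribute nothing on either side. A slicker but less self-contained route would prove the identity first for $\vec x$ with pairwise distinct coordinates and then extend it by continuity of both sides in $\vec x$; I would avoid that here since it presupposes the continuity of $f_o^L$.
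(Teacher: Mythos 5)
Your argument is correct, and it is essentially the same interval-splitting argument the paper itself uses: the statement is quoted from Bach without proof, but the paper's proof of the set-pair analogue (Proposition \ref{def:Lovasz2}) proceeds exactly as you do, identifying $f(V_t)$ as the constant $f(V_{\sigma(i)})$ between consecutive ordered values and converting the defining sum into the integral, with the $i=0$ term supplying $f(V)\min_i x_i$.
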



\begin{definition}
\rm
A set-function $f:\power(V)\to \mathbb{R}$ is symmetric if $f(A)=f(A^c)$ for any subset $A\subset V$. A set-pair-function $f:\power_2(V)\to \mathbb{R}$ is symmetric if  $f(A,B)=f(B,A)$ for any $(A,B)\in \power_2(V)$.
\end{definition}

\begin{prop}[\cite{Bach2013}, Proposition 3.1]\rm
\label{def:rrrr}
For $f_o^L(\vec x)$ by the Lov\'asz extension, we have
\begin{enumerate}[(a)]
\item $f_o^L(\vec x+\alpha \vec 1)=f_o^L(\vec x)+\alpha f(V)$ for any $\alpha\in \mathbb{R}$.
\item  $f_o^L(\vec x)$ is one-homogeneous.
\item $(f+g)_o^L=f_o^L+g_o^L$, $(\lambda f)_o^L=\lambda f_o^L$, $\forall \lambda\ge 0$.
\item $f_o^L(\vec x)$ is even if and only if $f$ is symmetric.
\end{enumerate}
\end{prop}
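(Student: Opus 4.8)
The plan is to derive all four statements from the explicit definition \eqref{eq:foL}, invoking the integral representation \eqref{eq:Lovasz-inte} of Proposition~\ref{def:Lovasz-inte} only for one step of part~(d). The organizing remark is that, for a fixed $\vec x$, the combinatorial data entering \eqref{eq:foL} --- the order bijection $\sigma$, the sets $V_{\sigma(i)}$ (equivalently the super-level sets $V_t(\vec x)=\{i\in V:x_i>t\}$), and the baseline $x_0=0$ --- depend on $\vec x$ alone and not on $f$, and they change in an elementary way when $\vec x$ is translated by $\alpha\vec 1$, scaled by a positive factor, or negated. Throughout I use the standard normalization $f(\varnothing)=0$.

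I would begin with~(c): since neither $\sigma$ nor the sets $V_{\sigma(i)}$ in \eqref{eq:foL} depend on $f$, for each fixed $\vec x$ the map $f\mapsto f_o^L(\vec x)$ is linear, which gives $(f+g)_o^L=f_o^L+g_o^L$ and $(\lambda f)_o^L=\lambda f_o^L$ pointwise, the hypothesis $\lambda\ge 0$ being needed only to keep $\lambda f$ valued in $[0,+\infty)$. For~(b) and~(a) I would argue directly from \eqref{eq:foL}. Replacing $\vec x$ by $\lambda\vec x$ with $\lambda>0$ leaves every $V_{\sigma(i)}$ and the baseline $x_0=0$ unchanged while multiplying every difference $x_{\sigma(i+1)}-x_{\sigma(i)}$ (including the one with $i=0$) by $\lambda$, so $f_o^L(\lambda\vec x)=\lambda f_o^L(\vec x)$; the case $\lambda=0$ is the trivial identity $f_o^L(\vec 0)=0$. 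Replacing $\vec x$ by $\vec x+\alpha\vec 1$ leaves every $V_{\sigma(i)}$ unchanged, leaves every difference $x_{\sigma(i+1)}-x_{\sigma(i)}$ with $i\ge 1$ unchanged, and changes only the $i=0$ contribution $x_{\sigma(1)}-x_0=x_{\sigma(1)}$ into $x_{\sigma(1)}+\alpha$, so exactly one extra term $\alpha f(V_0)=\alpha f(V)$ appears, which is~(a).

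For~(d) I would treat the two implications separately. For ``$f_o^L$ even $\Rightarrow f$ symmetric'': a one-line evaluation of \eqref{eq:foL} at characteristic vectors gives $f_o^L(\vec 1_A)=f(A)$ and $f_o^L(-\vec 1_A)=f(A^c)-f(V)$ for every $A\subseteq V$; equating these by evenness and taking $A=V$ forces $f(V)=-f(V)$, hence $f(V)=0$, and then $f(A)=f(A^c)$ for all $A$, i.e.\ $f$ is symmetric. For ``$f$ symmetric $\Rightarrow f_o^L$ even'' it is cleanest to use the integral form \eqref{eq:Lovasz-inte}: symmetry forces $f(V)=f(\varnothing)=0$, and $V_t(-\vec x)=\{i:x_i<-t\}$ coincides with $\big(V_{-t}(\vec x)\big)^c=\{i:x_i\le -t\}$ for every $t$ outside the finite set $\{-x_1,\dots,-x_n\}$, so by symmetry $f\big(V_t(-\vec x)\big)=f\big(V_{-t}(\vec x)\big)$ for almost every $t$; inserting this into \eqref{eq:Lovasz-inte}, using $\min_i(-x_i)=-\max_i x_i$, substituting $s=-t$, and dropping the vanishing $f(V)$ terms yields $f_o^L(-\vec x)=f_o^L(\vec x)$.

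All of this consists of short computations, and I do not anticipate a real obstacle. The one point requiring care is the gap between the strict and non-strict super-level sets $\{x_i>t\}$ and $\{x_i\ge t\}$ --- equivalently, ties among the coordinates and the coincidence of some $x_i$ with $0$ or with $-t$. I would dispose of this once at the outset, observing that the exceptional values of $t$ form a finite set, hence have no effect on the integral in \eqref{eq:Lovasz-inte}, and that the value $f_o^L(\vec x)$ is in any case independent of which order-preserving bijection $\sigma$ is chosen when there are ties; after this remark all the substitutions above go through unchanged.
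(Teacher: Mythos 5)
Your argument is correct, but note that the paper never proves Proposition \ref{def:rrrr} at all: it is quoted from Bach's monograph, and the only proof actually written out in the paper is for the set-pair analogue, Proposition \ref{pro:pro}, where all four parts are derived from the integral representation. Your route is therefore genuinely different in mechanics, if not in spirit: you work almost entirely from the summation formula \eqref{eq:foL}, observing that the permutation $\sigma$ and the level sets $V_{\sigma(i)}$ depend only on $\vec x$ and transform trivially under translation by $\alpha\vec 1$ and positive scaling (so (a)--(c) reduce to bookkeeping of the $i=0$ term), and you invoke the integral form \eqref{eq:Lovasz-inte} of Proposition \ref{def:Lovasz-inte} only for the implication ``$f$ symmetric $\Rightarrow f_o^L$ even,'' handling the converse by evaluating at $\pm\vec 1_A$. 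This buys a short, self-contained verification and makes transparent exactly where the affine term $\alpha f(V)$ comes from, whereas the paper's (set-pair) proof trades that for cleaner changes of variable in the integral. One point you handled that deserves emphasis: the normalization $f(\varnothing)=0$ you adopt is not cosmetic but necessary for part (d) as stated --- for a symmetric $f$ with $f(V)=f(\varnothing)>0$ one has $f_o^L(\alpha\vec 1)=\alpha f(V)$, which is not even --- and it is the standing convention in the cited source, so your explicit statement of it is exactly the right move; your dismissal of tie/boundary values of $t$ as a finite (measure-zero) set is likewise the standard and correct justification.
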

For $A\subset V$,  $\vec 1_A$ is the characteristic function of $A$.
For the set-pair case, we denote
\begin{equation}
\vec 1_{A,B}=\vec 1_A-\vec 1_B,\;\; \forall\,(A,B)\in \power_2(V).
\end{equation}
Accordingly, for any set-pair function $f:\power_2(V)\to [0,+\infty)$,  the following fact can be readily verified by Definition \ref{def:Lovasz-pair}
\begin{equation}\label{eq:fact}
f^L(\vec 1_{A,B}) = f(A,B), \;\;  \forall\, (A,B)\in\power_2(V)\setminus\{(\varnothing,\varnothing)\}.
\end{equation}
Particularly, the above equality is always true for any $(A,B)\in\power_2(V)$ if $f(\varnothing,\varnothing)=0$.

Similarly, we can derive an integral form of the set-pair Lov\'asz extension.
\begin{prop}\label{def:Lovasz2}
\begin{equation}\label{eq:Lovasz2int}
f^{L}(\vec x)=\int_0^{\|x\|_\infty} f(V_t^+(\vec x),V_t^-(\vec x))d t,
\end{equation}
where $V_t^{\pm}(\vec x)=\{i\in V: \pm x_i>t\}$.
\end{prop}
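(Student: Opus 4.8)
The plan is to recognize the right-hand side as the integral of a nonnegative step function and to read off its value directly from Definition~\ref{def:Lovasz-pair}. Set $\phi(t):=f\big(V_t^+(\vec x),V_t^-(\vec x)\big)$ for $t\ge 0$; since the two level sets are disjoint this is well defined, and the goal is to show that $\phi$ is constant on each interval between consecutive sorted absolute values, so that the integral collapses to the defining finite sum.

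First I would fix $i\in\{0,1,\dots,n-1\}$ (recall $|x_{\sigma(0)}|=|x_0|=0$) and check that for every $t\in(|x_{\sigma(i)}|,|x_{\sigma(i+1)}|)$ one has $V_t^\pm=V_{\sigma(i)}^\pm$, hence $\phi(t)=f(V_{\sigma(i)}^+,V_{\sigma(i)}^-)$ there. The inclusion $V_t^\pm\subseteq V_{\sigma(i)}^\pm$ is immediate from $t>|x_{\sigma(i)}|$. For the reverse inclusion, if $\pm x_j>|x_{\sigma(i)}|\ge 0$ then $|x_j|=\pm x_j>|x_{\sigma(i)}|$; since $|x_j|$ is one of the values $|x_{\sigma(1)}|\le\cdots\le|x_{\sigma(n)}|$ and none of them lies strictly between $|x_{\sigma(i)}|$ and $|x_{\sigma(i+1)}|$, this forces $|x_j|\ge|x_{\sigma(i+1)}|>t$, so $\pm x_j>t$ and $j\in V_t^\pm$.

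Then, using $\|\vec x\|_\infty=|x_{\sigma(n)}|$ and that the finitely many breakpoints $|x_{\sigma(0)}|,\dots,|x_{\sigma(n)}|$ do not affect the value of the integral, I would split
\[
\int_0^{\|\vec x\|_\infty}\phi(t)\,dt=\sum_{i=0}^{n-1}\int_{|x_{\sigma(i)}|}^{|x_{\sigma(i+1)}|}\phi(t)\,dt=\sum_{i=0}^{n-1}\big(|x_{\sigma(i+1)}|-|x_{\sigma(i)}|\big)\,f\big(V_{\sigma(i)}^+,V_{\sigma(i)}^-\big),
\]
which is precisely the defining expression \eqref{eq:fL} of $f^L(\vec x)$. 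Intervals with $|x_{\sigma(i)}|=|x_{\sigma(i+1)}|$ are empty and contribute zero on both sides, and $\vec x=\vec 0$ gives $0=0$, so these edge cases need no separate treatment.

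The argument is short; the one delicate point — the step to watch — is the bookkeeping of strict versus non-strict inequalities when identifying $V_t^\pm$ with $V_{\sigma(i)}^\pm$. This uses crucially that $t\ge 0$ (so that $\pm x_j>t$ already forces $\pm x_j=|x_j|$) together with the absence of sorted values strictly between consecutive $|x_{\sigma(i)}|$. The whole computation mirrors the integral representation \eqref{eq:Lovasz-inte} of the original Lov\'asz extension, the single level-set family $V_t$ being replaced by the pair $(V_t^+,V_t^-)$, with the lower limit pinned at $0$ because $|x_{\sigma(0)}|=0$; this last feature also explains why the boundary term $f(V)\min_i x_i$ appearing in \eqref{eq:Lovasz-inte} has no analogue here.
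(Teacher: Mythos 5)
Your proposal is correct and follows essentially the same route as the paper: identify $V_t^{\pm}(\vec x)$ with $V_{\sigma(i)}^{\pm}$ for $t$ between consecutive sorted absolute values and collapse the integral into the defining sum \eqref{eq:fL}, the only cosmetic difference being that the paper works with half-open intervals $|x_{\sigma(i)}|\le t<|x_{\sigma(i+1)}|$ while you use open intervals and discard the finitely many breakpoints. Your extra care with strict inequalities and the sign observation $\pm x_j=|x_j|$ is exactly the "easy to check" step the paper leaves implicit.
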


\begin{proof}
Let $\sigma$ be a permutation defined in Definition \ref{def:Lovasz-pair}. It is easy to check that  if $|x_{\sigma(i)}|\leq t< |x_{\sigma(i+1)}|$ then
$$V_t^{\pm}(\vec x)=\{i\in V: \pm x_i>t\}=V_{\sigma(i)}^\pm.$$
Therefore,
\begin{align*}
f^{L}(\vec x)&=\sum_{i=0}^{n-1}(|x_{\sigma(i+1)}|-|x_{\sigma(i)}|)f(V_{\sigma(i)}^+,V_{\sigma(i)}^-)\\
&=\sum_{i=0}^{n-1}\int_{|x_{\sigma(i)}|}^{|x_{\sigma(i+1)}|}f(V_{t}^+,V_{t}^-)dt\\
&=\int_0^{\|x\|_\infty} f(V_t^+,V_t^-)dt.
\end{align*}
\end{proof}

For simplicity, we denote by $(A,B)\subset (C,D)$ if $A\subset C$ and $B\subset D$ for $(A,B), (C,D)\in \power_2(V)$.

\begin{prop}\label{def:Lovasz1}
\begin{equation}\label{eq:Lovasz2sum}
f^{L}(\vec x)=\sum_{i=1}^p\lambda_if(V_i^+,V_i^-)
\end{equation}
whenever $ (V_p^+,V_p^-)\subset\cdots\subset (V_0^+,V_0^-)$ ($p\in\mathbb{N}^+$) is a chain  satisfying $V_0^+\cap V_0^-=\varnothing$, $\sum_{i=0}^p \lambda_i \vec 1_{V_i^+,V_i^-}=\vec x$ and $\sum_{i=0}^p \lambda_i=\|\vec x\|_\infty$, $\lambda_i\ge 0$.
\end{prop}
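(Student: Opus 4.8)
The plan is to reduce the identity to the integral representation from Proposition~\ref{def:Lovasz2}, $f^L(\vec x)=\int_0^{\|\vec x\|_\infty}f(V_t^+(\vec x),V_t^-(\vec x))\,dt$, and to show that, read off from the prescribed chain, the level-set pair $(V_t^+(\vec x),V_t^-(\vec x))$ is piecewise constant in $t$, equal to $(V_i^+,V_i^-)$ on an interval of length $\lambda_i$.

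First I would recover the coordinates of $\vec x$ from the decomposition. Set $c_{-1}:=0$ and $c_i:=\lambda_0+\cdots+\lambda_i$ for $0\le i\le p$; since each $\lambda_i\ge 0$ the sequence $(c_i)$ is nondecreasing, and $c_p=\|\vec x\|_\infty$ by hypothesis. Nestedness of the pairs together with $V_0^+\cap V_0^-=\varnothing$ forces each vertex to lie in at most one of the two families, and if $j\in V_0^+$ then $\{i:j\in V_i^+\}=\{0,1,\ldots,k_j\}$ with $k_j:=\max\{i:j\in V_i^+\}$, whence $x_j=\sum_{i=0}^{k_j}\lambda_i=c_{k_j}\ge 0$; symmetrically $x_j=-c_{m_j}\le 0$ when $j\in V_0^-$, and $x_j=0$ when $j$ lies in neither family. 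The crux is then the claim that, for each $i$ and every $t\in(c_{i-1},c_i)$, $V_t^+(\vec x)=V_i^+$ and $V_t^-(\vec x)=V_i^-$. For the first equality: if $j\notin V_0^+$ then $x_j\le 0<t$, so $j$ is in neither $V_t^+$ nor $V_i^+$; if $j\in V_0^+$ then $j\in V_t^+\iff c_{k_j}>t$, and $c_{k_j}>t>c_{i-1}$ forces $k_j\ge i$ by monotonicity of $(c_i)$, i.e.\ $j\in V_i^+$, while conversely $j\in V_i^+$ gives $x_j=c_{k_j}\ge c_i>t$. The argument for $V_t^-$ is the same with signs reversed.

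Granting the claim, the intervals $(c_{i-1},c_i)$, $i=0,\ldots,p$, exhaust $(0,\|\vec x\|_\infty)$ up to the finite set $\{c_0,\ldots,c_{p-1}\}$ (the $i$-th one being empty precisely when $\lambda_i=0$), so
\[
f^L(\vec x)=\sum_{i=0}^{p}\int_{c_{i-1}}^{c_i}f(V_i^+,V_i^-)\,dt=\sum_{i=0}^{p}(c_i-c_{i-1})\,f(V_i^+,V_i^-)=\sum_{i=0}^{p}\lambda_i\,f(V_i^+,V_i^-),
\]
which is the desired identity (the outermost summand $\lambda_0 f(V_0^+,V_0^-)$ disappears precisely when $\lambda_0=0$, as happens, e.g., for the canonical decomposition of any $\vec x$ with a zero coordinate). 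I do not expect a genuine obstacle here; the only care needed is bookkeeping — verifying that zero coordinates of $\vec x$ (equivalently, vertices in no $V_i^\pm$) never enter a level set, tolerating the empty intervals coming from vanishing $\lambda_i$, and using $c_p=\|\vec x\|_\infty$ to match the integration range. A more pedestrian alternative is induction on $p$: peel off $\lambda_p\vec 1_{V_p^+,V_p^-}$ and invoke the case $p-1$ on the truncated chain, using the additivity and one-homogeneity of $f\mapsto f^L$ read off from~\eqref{eq:fL}; the integral argument is preferable since it dispatches the degenerate cases uniformly.
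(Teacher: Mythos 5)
Your argument is correct and follows essentially the same route as the paper: both pass to the integral form of Proposition~\ref{def:Lovasz2}, recover $x_j$ as a partial sum of the $\lambda_i$'s via the chain structure, and show that $(V_t^+(\vec x),V_t^-(\vec x))=(V_i^+,V_i^-)$ on the interval of length $\lambda_i$, so the integral splits into the claimed sum (your bookkeeping with the $c_i$'s is in fact cleaner than the paper's $t_i$ indexing). Note that what you prove, $f^L(\vec x)=\sum_{i=0}^p\lambda_i f(V_i^+,V_i^-)$, is exactly what the paper's own proof establishes; the lower limit $i=1$ in the displayed statement is evidently a typo.
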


\begin{proof}
Setting $t_i=\sum_{j=0}^{i-1}\lambda_j$, $i=0,\ldots,p$, $t_{p+1}=\norm{\vec x}$. Now we verify  
\begin{equation}
\lambda_if(V_i^+,V_i^-)=\int_{t_{i-1}}^{t_i}f(V_t^+(\vec x),V_t^-(\vec x))dt, \quad i=0,\ldots,p,
\end{equation}
where $V^\pm_t(\vec x)=\{ i\in V\,|\, \pm x_i>t\}$. It obviously holds for the case of $\lambda_i=0$, so we can assume that $\lambda_i\ne 0$. Since
\begin{equation}\label{eq:lovaszcondition1}
\sum_{i=0}^p \lambda_i \vec 1_{V_i^+,V_i^-}=\vec x
\end{equation}
and
\begin{equation*}
(V_p^+,V_p^-)\subset\cdots\subset (V_0^+,V_0^-),
\end{equation*}
we can assume that $j\in V_i^\pm$ and thus $j\notin V_i^\mp$ for any $0\le i \le p$. Consider the $j$-th component of \eqref{eq:lovaszcondition1} on both sides,
\begin{equation}\label{eq:lambdaixj}
\pm\sum\limits_{ V_i^\pm\ni j}\lambda_i =\pm\sum\limits_{i=0}^{p_j}\lambda_i=\pm t_{p_j+1}=x_j,
\end{equation}
where $p_j$ is the largest integer such that $j\in V_{p_j}^\pm$. Then
$$V_t^\pm(\vec x) = V_i^\pm,\;\; \forall\,t\in[t_{i-1},t_i),$$
and
\begin{equation*}
\int_{t_{i-1}}^{t_i}f(V_t^+(\vec x),V_t^-(\vec x))dt=\int_{t_{i-1}}^{t_i}f(V_i^+,V_i^-)dt=\lambda_if(V_i^+,V_i^-).
\end{equation*}
Therefore
\begin{align*}
\sum_{i=0}^p\lambda_if(V_i^+,V_i^-)&=\sum_{i=0}^p \int_{t_{i-1}}^{t_i}f(V_t^+(\vec x),V_t^-(\vec x))dt\\
&=\int_{0}^{t_p}f(V_t^+(\vec x),V_t^-(\vec x))dt=f^L(\vec x).
\end{align*}

In particular, let $p=n-1$, $V_i^\pm =V_{\sigma(i)}^\pm$ and
$\lambda_i = |x_{\sigma(i+1)}|-|x_{\sigma(i)}|$, $i=0,1,\ldots,n-1$,
then \eqref{eq:Lovasz2sum} returns to \eqref{eq:fL}.
\end{proof}

\begin{remark}\rm
A more detailed checking of the proof of Proposition \ref{def:Lovasz1} shows that,
for given $\vec x\ne\vec0$, if we assume every $\lambda_i>0$,  then the chain $ (V_p^+,V_p^-)\subset\cdots\subset (V_0^+,V_0^-)$ ($p\in\mathbb{N}^+$) and $\{\lambda_i\}_{i=0}^p$ in Proposition \ref{def:Lovasz1}
 are uniquely determined by $\vec x$ and thus independent of $f$. That is, if there are two chains $ (V_p^+,V_p^-)\subset\cdots\subset (V_0^+,V_0^-)$ ($p\in\mathbb{N}^+$), and  $ (\widetilde{V}_q^+,\widetilde{V}_q^-)\subset\cdots\subset (\widetilde{V}_0^+,\widetilde{V}_0^-)$ ($q\in\mathbb{N}^+$), as well as two sequences of positive numbers $\{\lambda_i\}_{i=0}^p$ and $\{ \widetilde{\lambda}_i\}_{i=0}^q$, such that  $V_0^+\cap V_0^-=\widetilde{V}_0^+\cap \widetilde{V}_0^-=\varnothing$, $\sum_{i=0}^p \lambda_i \vec 1_{V_i^+,V_i^-}=\sum_{i=0}^q \widetilde{\lambda}_i \vec 1_{\widetilde{V}_i^+,\widetilde{V}_i^-}=\vec x$ and $\sum_{i=0}^p \lambda_i=\sum_{i=0}^q  \widetilde{\lambda}_i=\|\vec x\|_\infty$, then $q=p$, $(\widetilde{V}_i^+,\widetilde{V}_i^-)=(V_i^+,V_i^-)$ and  $ \widetilde{\lambda}_i =\lambda_i $, $i=0,\ldots,p$.
\end{remark}


Propositions \ref{def:Lovasz2} and \ref{def:Lovasz1} provides repectively the integral (continuous) form and chain (combinatorial) form of the set-pair Lov\'asz extension, both of which are very helpful.

The set-pair version of Proposition \ref{def:rrrr} reads as follows:
\begin{prop}\label{pro:pro}
For $f^L(\vec x)$ by the set-pair Lov\'asz extension, we have
\begin{enumerate}[(a)]
\item $f^L(\vec x+ \alpha\,\sign(\vec x))=f^L(\vec x)+\alpha f(V_0^+,V_0^-)$ for any $\alpha\ge 0$.
\item\label{pro:pro-c}  $f^L(\vec x)$ is one-homogeneous.
\item $(f+g)^L=f^L+g^L$, $(\lambda f)^L=\lambda f^L$, $\forall\,\lambda\ge 0$.
\item $f^L(\vec x)$ is even if and only if $f$ is symmetric.
\end{enumerate}
\end{prop}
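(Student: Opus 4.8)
The plan is to prove the four items in parallel with Proposition~\ref{def:rrrr}, using the integral representation \eqref{eq:Lovasz2int} and the chain representation \eqref{eq:Lovasz2sum} of $f^L$ established above; throughout, the degenerate point $\vec x=\vec 0$ is trivial, since there every level set is empty and $f^L(\vec 0)=0$.

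For the one-homogeneity in item (b), I would fix $t>0$ and $\vec x\ne\vec 0$, observe that $\|t\vec x\|_\infty=t\|\vec x\|_\infty$ and $V_{ts}^{\pm}(t\vec x)=\{j:\pm tx_j>ts\}=V_s^{\pm}(\vec x)$ for every $s>0$, and then substitute $u=ts$ in \eqref{eq:Lovasz2int} to obtain $f^L(t\vec x)=t\,f^L(\vec x)$; the case $t=0$ is immediate. For item (c), the quickest route uses \eqref{eq:Lovasz2sum} together with the remark after Proposition~\ref{def:Lovasz1}: for fixed $\vec x\ne\vec 0$ one may pick the chain $(V_p^+,V_p^-)\subset\cdots\subset(V_0^+,V_0^-)$ and the nonnegative coefficients $\{\lambda_i\}$ representing $f^L(\vec x)$ independently of the set-pair function; substituting this common chain into the extensions of $f$, $g$, $f+g$ and $\lambda f$ (each of which still takes values in $[0,+\infty)$) and using linearity of the finite sum in \eqref{eq:Lovasz2sum} yields $(f+g)^L=f^L+g^L$ and $(\lambda f)^L=\lambda f^L$ for $\lambda\ge 0$.

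For item (d), the implication ``$f$ symmetric $\Rightarrow f^L$ even'' is a one-line computation from Definition~\ref{def:Lovasz-pair}: passing from $\vec x$ to $-\vec x$ changes neither the ordering $|x_{\sigma(1)}|\le\cdots\le|x_{\sigma(n)}|$ nor the increments $|x_{\sigma(i+1)}|-|x_{\sigma(i)}|$, but interchanges $V_{\sigma(i)}^{+}\leftrightarrow V_{\sigma(i)}^{-}$, so $f^L(-\vec x)=\sum_i(|x_{\sigma(i+1)}|-|x_{\sigma(i)}|)f(V_{\sigma(i)}^{-},V_{\sigma(i)}^{+})=f^L(\vec x)$ by symmetry. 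Conversely, if $f^L$ is even then for every $(A,B)\in\power_2(V)$ the identity \eqref{eq:fact} and $-\vec 1_{A,B}=\vec 1_{B,A}$ give $f(A,B)=f^L(\vec 1_{A,B})=f^L(-\vec 1_{A,B})=f^L(\vec 1_{B,A})=f(B,A)$, so $f$ is symmetric.

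The main work is item (a), which I would also carry out via \eqref{eq:Lovasz2int}. Set $\vec y=\vec x+\alpha\,\sign(\vec x)$ with $\alpha\ge 0$ and $\vec x\ne\vec 0$. First record the elementary facts $|y_j|=|x_j|+\alpha$ whenever $x_j\ne 0$ and $y_j=0$ whenever $x_j=0$, hence $\|\vec y\|_\infty=\|\vec x\|_\infty+\alpha$ and $\sign(\vec y)=\sign(\vec x)$. The key computation is the identification of the level sets of $\vec y$: for $t>\alpha$ one has $V_t^{\pm}(\vec y)=\{j:\pm x_j>t-\alpha\}=V_{t-\alpha}^{\pm}(\vec x)$, while for $0<t\le\alpha$ one has $V_t^{\pm}(\vec y)=\{j:\pm x_j>0\}=V_0^{\pm}(\vec x)$. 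Splitting $\int_0^{\|\vec y\|_\infty}$ at $t=\alpha$ and substituting $s=t-\alpha$ on the upper part turns \eqref{eq:Lovasz2int} into $\alpha f(V_0^{+},V_0^{-})+f^L(\vec x)$, which is the claim; the case $\alpha=0$ is trivial, and for $\vec x=\vec 0$ one has $\sign(\vec x)=\vec 0$ so both sides reduce to $f^L(\vec 0)$. I expect the only genuine obstacle here to be bookkeeping rather than any hard estimate: one must carefully handle vanishing components of $\vec x$ (which simply remain at the bottom of the reordering and contribute nothing) and the fact that the description of $V_t^{\pm}(\vec y)$ changes across the single threshold $t=\alpha$ (harmless under an integral). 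As a cross-check I would also run the purely combinatorial version directly on \eqref{eq:fL}: the permutation $\sigma$ of $\vec x$ still sorts $|\vec y|$, all increments between strictly positive levels are unchanged, and only the lowest increment $|x_{\sigma(1)}|-0$ picks up an extra $\alpha$, multiplied by the coefficient $f(V_{\sigma(0)}^{+},V_{\sigma(0)}^{-})=f(V_0^{+},V_0^{-})$.
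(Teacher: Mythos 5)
Your proof is correct and follows essentially the same route as the paper's: the integral form \eqref{eq:Lovasz2int} split at $t=\alpha$ with the level-set identities $V^\pm_t(\vec x+\alpha\sign(\vec x))=V^\pm_{t-\alpha}(\vec x)$ for $t>\alpha$ and $=V^\pm_0(\vec x)$ below, the substitution argument for one-homogeneity, linearity of the representation for (c), and the pair $\eqref{eq:fact}$ with $-\vec 1_{A,B}=\vec 1_{B,A}$ plus the sign-flip $V^+\leftrightarrow V^-$ for (d). The only quibble is your degenerate case in (a): for $\vec x=\vec 0$ the right-hand side is $f^L(\vec 0)+\alpha f(\varnothing,\varnothing)$, not $f^L(\vec 0)$, so the identity there requires $f(\varnothing,\varnothing)=0$ (or $\alpha=0$) — an edge case the paper's own proof also silently ignores.
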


\begin{proof}
We will give the proof in turn.
\begin{enumerate}[(a)]
\item Let $\tilde{\vec x}=\vec x +\alpha\,\sign(\vec x)$. Then $\norm{\tilde{\vec x}}=\norm{\vec x}+\alpha$ and
\[
\begin{cases}
V_{t}^\pm(\tilde{\vec x})=V_{t-\alpha}^\pm(\vec x),&\mbox{ if }t\geq\alpha,\\
V_t^\pm(\tilde{\vec x})=V_0^\pm(\vec x),&\mbox{ if }t\in[0,\alpha).
\end{cases}
\]
According to Proposition \ref{def:Lovasz2}, we have
\begin{align*}
f^L(\tilde{\vec x})&=\int_0^{\|\tilde{\vec x}\|_\infty} f(V_t^+(\tilde{\vec x}),V_t^-(\tilde{\vec x}))d t\\\
&=\int_0^\alpha f(V^+_0,V^-_0)dt+\int_\alpha^{\|x\|_\infty+\alpha} f(V_t^+(\tilde{\vec x}),V_t^-(\tilde{\vec x}))d t\\
&=\alpha f(V^+_0,V^-_0)+\int_0^{\|x\|_\infty} f(V_t^+({\vec x}),V_t^-({\vec x}))dt\\
&=\alpha f(V^+_0,V^-_0)+f^L(\vec x).
\end{align*}
\item For any $ \lambda>0$, we have
\begin{align*}
f^L(\lambda \vec x)&=\int_0^{\lambda\|x\|_\infty} f(V_t^+(\lambda\vec x),V_t^-(\lambda\vec x))d t\\
&=\int_0^{\|x\|_\infty} \lambda f(V_{s}^+(\vec x),V_{s}^-(\vec x))d s=\lambda f^L(\vec x).
\end{align*}

\item It can be obtained directly by the linearity of integral operators.

\item On one hand, if $f^L(\vec x)$ is even, then for any $A,B\in \power_2(V)$, there holds
\begin{align*}
f(A,B)=f^L(\vec 1_{A,B})=f^L(-\vec 1_{A,B})=f^L(\vec 1_{B,A})=f(B,A),
\end{align*}
due to \eqref{eq:fact}. Hence $f(A,B)$ is symmetric.\\
On the other hand, if $f(A,B)$ is symmetric, then using Proposition \ref{def:Lovasz2} leads to
\begin{align*}
f^{L}(\vec x)&=\int_0^{\|x\|_\infty} f(V_t^+(\vec x),V_t^-(\vec x))d t\\
&=\int_0^{\|x\|_\infty} f(V_t^-(\vec x),V_t^+(\vec x))d t\\
&=\int_0^{\|x\|_\infty} f(V_t^+(-\vec x),V_t^-(-\vec x))d t =f^{L}(-\vec x),
\end{align*}
i.e., $f^L(\vec x)$ is even.
\end{enumerate}
\end{proof}

Now we are in the position to give the proof of Theorem \ref{th:set-pair-Lovasz}.

\begin{proof}[Proof of Theorem \ref{th:set-pair-Lovasz}]
On one hand, for any $(A,B)\in \power_2(V)\setminus\{(\varnothing,\varnothing)\}$,
we have $f(A,B)=f^L(\vec 1_{A,B})$ and $g(A,B)=g^L(\vec 1_{A,B})$ due to \eqref{eq:fact}, and then
\begin{equation}\label{eq:set-pair-Lovasz1}
\min\limits_{(A,B)\in \power_2(V)\setminus\{(\varnothing,\varnothing)\}}\frac{f(A,B)}{g(A,B)}=\min\limits_{(A,B)\in \power_2(V)\setminus\{(\varnothing,\varnothing)\}}\frac{f^L(\vec 1_{A,B})}{g^L(\vec 1_{A,B})}\ge \inf\limits_{\vec x\neq \vec 0} \frac{f
^L(\vec x)}{g^L(\vec x)}.
\end{equation}
On the other hand, for any $ \vec x\ne \vec 0$, we have
$$\frac{f^{L}(\vec x)}{g^{L}(\vec x)}=\frac{\sum_{i=0}^{n-1}(|x_{\sigma(i+1)}|-|x_{\sigma(i)}|)f(V_{\sigma(i)}^+,V_{\sigma(i)}^-)}{\sum_{i=0}^{n-1}(|x_{\sigma(i+1)}|-|x_{\sigma(i)}|)g(V_{\sigma(i)}^+,V_{\sigma(i)}^-)}.$$
Let $(C,D)\in\set{(V_{\sigma(i)}^+,V_{\sigma(i)}^-)|0\le i\le n-1}$ such that
$$
\frac{f(C,D)}{g(C,D)}=\min\limits_{0\leq i \leq n-1}\frac{f(V_{\sigma(i)}^+,V_{\sigma(i)}^-)}{g(V_{\sigma(i)}^+,V_{\sigma(i)}^-)},
$$
and thus
$$
\Pi_i := g(V_{\sigma(i)}^+,V_{\sigma(i)}^-) \left(\frac{f(V_{\sigma(i)}^+,V_{\sigma(i)}^-)}{g(V_{\sigma(i)}^+,V_{\sigma(i)}^-)}
-\frac{f(C,D)}{g(C,D)}\right)\geq 0
$$
holds for any $0\le i\le n-1$. Accordingly, we have
\begin{equation*}
\frac{f^{L}(\vec x)}{g^{L}(\vec x)}-\frac{f(C,D)}{g(C,D)}
=\frac{\sum_{i=0}^{n-1}(|x_{\sigma(i+1)}|-|x_{\sigma(i)}|)\Pi_i}{\sum_{i=0}^{n-1}(|x_{\sigma(i+1)}|-|x_{\sigma(i)}|)
g(V_{\sigma(i)}^+,V_{\sigma(i)}^-)}
\ge 0,
\end{equation*}
which directly implies
\begin{equation}\label{eq:set-pair-Lovasz2}
\min\limits_{(A,B)\in \power_2(V)\setminus\{(\varnothing,\varnothing)\}}\frac{f(A,B)}{g(A,B)}=\frac{f^L(\vec 1_{C,D})}{g^L(\vec 1_{C,D})}\leq \inf\limits_{\vec x\neq \vec 0} \frac{f
^L(\vec x)}{g^L(\vec x)}.
\end{equation}
Combining \eqref{eq:set-pair-Lovasz1} and \eqref{eq:set-pair-Lovasz2} finally yields
\begin{equation*}
\min\limits_{(A,B)\in \power_2(V)\setminus\{(\varnothing,\varnothing)\} }\frac{f(A,B)}{g(A,B)}=\min\limits_{\vec   x\ne\vec  0}\frac{f^L( \vec  x)}{g^L( \vec  x)}.
\end{equation*}
The proof for the maximum problem \eqref{eq:setpair-Lovasz-max} is similar and thus skipped.
\end{proof}
A similar deduction to that for Theorem \ref{th:set-pair-Lovasz} leads to

\begin{prop}\label{pro2}
Assume that $f,g:\power_2(V)\to [0,+\infty)$ are two set-pair functions satisfying $f(\varnothing,V)=f(V,\varnothing)=0$ and $g(A,B)>0$ whenever $(A,B)\notin\{(\varnothing,\varnothing),(\varnothing,V),(V,\varnothing)\}$, then there hold both
\begin{equation}\label{eq:0-to-nonconstant}
\min\limits_{(A,B)\in \power_2(V)\setminus\{(\varnothing,\varnothing),(\varnothing,V),(V,\varnothing)\} }\frac{f(A,B)}{g(A,B)}=\min\limits_{\vec x\text{ nonconstant}}\frac{f^L( \vec  x)}{g^L( \vec  x)},
\end{equation}
and
\begin{equation}\label{eq:0-to-nonconstant-max}
\max\limits_{(A,B)\in \power_2(V)\setminus\{(\varnothing,\varnothing),(\varnothing,V),(V,\varnothing)\} }\frac{f(A,B)}{g(A,B)}=\max\limits_{\vec x\text{ nonconstant}}\frac{f^L( \vec  x)}{g^L( \vec  x)}.
\end{equation}
\end{prop}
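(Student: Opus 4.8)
The plan is to mimic the proof of Theorem \ref{th:set-pair-Lovasz} almost verbatim, but to keep track of where the excluded pairs $(\varnothing,V)$ and $(V,\varnothing)$ can enter, and to use the extra hypotheses $f(\varnothing,V)=f(V,\varnothing)=0$ to neutralize their contributions. I will only write the argument for the minimum \eqref{eq:0-to-nonconstant}; the maximum case \eqref{eq:0-to-nonconstant-max} follows by the symmetric modification, exactly as in Theorem \ref{th:set-pair-Lovasz}.

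First I would establish the ``$\ge$'' direction. For any $(A,B)$ in $\power_2(V)\setminus\{(\varnothing,\varnothing),(\varnothing,V),(V,\varnothing)\}$, the vector $\vec 1_{A,B}=\vec 1_A-\vec 1_B$ is nonconstant: indeed $\vec 1_{A,B}$ is constant iff it is $\vec 0$ (so $(A,B)=(\varnothing,\varnothing)$), or $\vec 1$ (so $(A,B)=(V,\varnothing)$), or $-\vec 1$ (so $(A,B)=(\varnothing,V)$), all of which are excluded. By \eqref{eq:fact} (whose hypothesis on $f(\varnothing,\varnothing)$ is not even needed here since $(A,B)\ne(\varnothing,\varnothing)$) we get $f(A,B)=f^L(\vec 1_{A,B})$ and $g(A,B)=g^L(\vec 1_{A,B})$, hence
\begin{equation*}
\min\limits_{(A,B)}\frac{f(A,B)}{g(A,B)}=\min\limits_{(A,B)}\frac{f^L(\vec 1_{A,B})}{g^L(\vec 1_{A,B})}\ge \inf\limits_{\vec x\text{ nonconstant}}\frac{f^L(\vec x)}{g^L(\vec x)},
\end{equation*}
the minima ranging over $\power_2(V)\setminus\{(\varnothing,\varnothing),(\varnothing,V),(V,\varnothing)\}$.

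For the ``$\le$'' direction I would fix an arbitrary nonconstant $\vec x$ and expand $f^L(\vec x)/g^L(\vec x)$ as a ratio of the two sums $\sum_{i=0}^{n-1}(|x_{\sigma(i+1)}|-|x_{\sigma(i)}|)f(V_{\sigma(i)}^+,V_{\sigma(i)}^-)$ and the analogous one for $g$, just as in the proof of Theorem \ref{th:set-pair-Lovasz}. The key new point is: among the index set $\{0,1,\dots,n-1\}$ I want to restrict attention to those $i$ for which $|x_{\sigma(i+1)}|-|x_{\sigma(i)}|>0$ (the others contribute nothing), and I claim that for every such $i$ the pair $(V_{\sigma(i)}^+,V_{\sigma(i)}^-)$ lies in $\power_2(V)\setminus\{(\varnothing,\varnothing),(\varnothing,V),(V,\varnothing)\}$. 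The pairs $(\varnothing,V)$ and $(V,\varnothing)$ are impossible since $V_{\sigma(i)}^+$ and $V_{\sigma(i)}^-$ are always disjoint, so only $(\varnothing,\varnothing)$ is at issue: if $(V_{\sigma(i)}^+,V_{\sigma(i)}^-)=(\varnothing,\varnothing)$ then no coordinate of $\vec x$ exceeds $|x_{\sigma(i)}|$ in absolute value, which forces $i=n-1$, i.e. this term sits at the top of the chain, but then $|x_{\sigma(n)}|-|x_{\sigma(n-1)}|$ need not vanish. To handle that one stray term I would use the hypotheses $f(\varnothing,V)=f(V,\varnothing)=0$ in the guise they are actually needed — wait, the term that can be $(\varnothing,\varnothing)$ is the top one, and the bottom term $(V_0^+,V_0^-)$ might instead be of the form where one of $V_0^\pm$ is all of $V$; so I would invoke $f(V,\varnothing)=f(\varnothing,V)=0$ precisely to kill the $i=0$ term when $V_0^+=V$ or $V_0^-=V$, and handle the $i=n-1$, $(\varnothing,\varnothing)$ term by noting $f(\varnothing,\varnothing)$: here I should additionally observe that one-homogeneity and \eqref{eq:fL} make $f^L(\vec x)$ insensitive to the value $f(\varnothing,\varnothing)$ at the top provided $|x_{\sigma(n)}|=|x_{\sigma(n-1)}|$, and if $|x_{\sigma(n)}|>|x_{\sigma(n-1)}|$ then $V_{\sigma(n-1)}^+\cup V_{\sigma(n-1)}^-\ne\varnothing$ is automatic. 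Once these boundary bookkeeping issues are settled, I pick $(C,D)$ among the surviving pairs $(V_{\sigma(i)}^+,V_{\sigma(i)}^-)$ realizing the minimum of $f/g$ over that sub-collection, form $\Pi_i:=g(V_{\sigma(i)}^+,V_{\sigma(i)}^-)\bigl(f/g-f(C,D)/g(C,D)\bigr)\ge0$, and conclude $f^L(\vec x)/g^L(\vec x)\ge f(C,D)/g(C,D)\ge\min_{(A,B)}f(A,B)/g(A,B)$ exactly as before; taking the infimum over nonconstant $\vec x$ finishes the proof. Combining the two directions gives \eqref{eq:0-to-nonconstant}, and the attained minimum on the right is realized at some $\vec 1_{C,D}$, so the infimum is a minimum.

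The main obstacle I anticipate is the boundary bookkeeping just sketched: making fully rigorous the claim that in the chain defining $f^L(\vec x)$ for a nonconstant $\vec x$, the only set-pairs with nonzero coefficient that could fall outside the admissible collection are the trivial pairs appearing at the two ends of the chain, and that the hypotheses $f(\varnothing,V)=f(V,\varnothing)=0$ (plus the harmlessness of $f(\varnothing,\varnothing)$ under the $|x_{\sigma(n)}|=|x_{\sigma(n-1)}|$ normalization) exactly cancel those. Everything else is a routine transcription of the Theorem \ref{th:set-pair-Lovasz} argument.
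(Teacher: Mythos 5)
Your overall strategy is the intended one (the paper gives no separate argument for Proposition \ref{pro2} beyond ``a similar deduction to that for Theorem \ref{th:set-pair-Lovasz}''), and your test-vector direction is fine: $\vec 1_{A,B}$ is nonconstant exactly when $(A,B)\notin\{(\varnothing,\varnothing),(V,\varnothing),(\varnothing,V)\}$, and \eqref{eq:fact} applies. The gap is in the other direction, precisely at the boundary bookkeeping you flag. First, your claim that $(\varnothing,V)$ and $(V,\varnothing)$ cannot occur among the $(V_{\sigma(i)}^+,V_{\sigma(i)}^-)$ ``since $V_{\sigma(i)}^+$ and $V_{\sigma(i)}^-$ are always disjoint'' is not a valid reason: these are disjoint pairs, and indeed $(V_{\sigma(0)}^+,V_{\sigma(0)}^-)=(V,\varnothing)$ with coefficient $|x_{\sigma(1)}|>0$ whenever all coordinates of $\vec x$ are strictly positive, e.g. the nonconstant vector $\vec x=(1,2)$. (By contrast, your worry about $(\varnothing,\varnothing)$ at the top is vacuous: if $(V_{\sigma(i)}^+,V_{\sigma(i)}^-)=(\varnothing,\varnothing)$ then $|x_{\sigma(i)}|=\|\vec x\|_\infty$, so the coefficient $|x_{\sigma(i+1)}|-|x_{\sigma(i)}|$ vanishes, including at $i=n-1$.) Second, and more seriously, invoking $f(V,\varnothing)=f(\varnothing,V)=0$ only removes that term from the numerator; the corresponding term $|x_{\sigma(1)}|\,g(V,\varnothing)$ survives in the denominator, and the stated hypotheses do not force $g(V,\varnothing)$ or $g(\varnothing,V)$ to vanish. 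This breaks the inequality $f^L(\vec x)/g^L(\vec x)\ge \min f/g$ needed for \eqref{eq:0-to-nonconstant}: with $n=2$, take $f\equiv 1$ off the three excluded pairs and $0$ on them, and $g\equiv 1$ off the excluded pairs with $g(V,\varnothing)=g(\varnothing,V)=100$; then the discrete minimum is $1$, while $\vec x=(1,2)$ is nonconstant and gives $f^L(\vec x)/g^L(\vec x)=1/101$. So the step you describe cannot be completed as written; note the maximum identity \eqref{eq:0-to-nonconstant-max} is unaffected, since the extra nonnegative $g$-terms only enlarge the denominator, which is the favorable direction there.

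To close the gap in the minimization case you need either the additional hypothesis $g(\varnothing,V)=g(V,\varnothing)=0$ (which holds in the paper's intended applications, e.g. $G_3$), so that both numerator and denominator lose the $i=0$ term and the mediant argument over the remaining admissible pairs goes through (for nonconstant $\vec x$ at least one positive-coefficient pair is admissible, since otherwise only $i=0$ carries weight and then all $|x_j|$ are equal with a common strict sign, i.e. $\vec x$ is constant; hence the truncated denominator is positive), or else a restriction of the right-hand infimum excluding vectors all of whose entries have one strict sign. In other words, the defect is partly in the proposition's hypotheses as literally stated, but your proof does not detect or repair it: killing only the $f$-value at $(V,\varnothing)$ and $(\varnothing,V)$ is not enough.
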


%

Next, we study the  continuity of $f^L$. 

\begin{theorem}\label{pro:Local-Lip}
$f^L$ is a Lipschitz continuous piecewise linear function.
\end{theorem}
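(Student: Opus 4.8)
The plan is to establish the three asserted properties — piecewise linearity, continuity, and the Lipschitz bound — by exploiting the explicit formula \eqref{eq:fL} together with the structure of the sublevel sets $V_{\sigma(i)}^\pm$. The starting point is that $f$ takes finitely many values (its domain $\power_2(V)$ is finite), so $f^L$ is built out of finitely many affine pieces; the only subtlety is the interaction of these pieces across the loci where the ordering permutation $\sigma$ changes or where some coordinate $x_j$ crosses $0$.

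First I would set up the right polyhedral decomposition of $\mathbb{R}^n$. For a fixed choice of a total preorder on $\{|x_1|,\dots,|x_n|\}$ and a fixed sign pattern $s\in\{-1,0,+1\}^n$, the region $R$ of all $\vec x$ realizing both is a polyhedral cone (cut out by finitely many inequalities of the form $\pm x_i\ge 0$ and $|x_i|\le |x_j|$, the latter being linear once the signs are fixed). On each such $R$ the combinatorial data $\bigl(V_{\sigma(0)}^\pm,\dots,V_{\sigma(n-1)}^\pm\bigr)$ is constant, so by \eqref{eq:fL} the function $f^L$ restricted to $R$ is a linear combination of the quantities $|x_{\sigma(i+1)}|-|x_{\sigma(i)}|$, each of which is linear in $\vec x$ on $R$ (signs are fixed). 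Hence $f^L|_R$ is linear, and since the finitely many cones $R$ cover $\mathbb{R}^n$, $f^L$ is piecewise linear.

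Next, continuity. Rather than check agreement on shared faces by hand, I would invoke the integral form from Proposition~\ref{def:Lovasz2}, $f^L(\vec x)=\int_0^{\|\vec x\|_\infty} f(V_t^+(\vec x),V_t^-(\vec x))\,dt$, together with the one-homogeneity from Proposition~\ref{pro:pro}\eqref{pro:pro-c}. Homogeneity reduces the problem to the unit sphere; there the integrand is bounded by $M:=\max_{(A,B)}f(A,B)<\infty$ and is measurable, and for $\vec x\to\vec y$ one checks that the symmetric difference of the ``super-level'' sets contributes measure tending to $0$, giving $f^L(\vec x)\to f^L(\vec y)$. Alternatively, and more cleanly, continuity follows for free once the Lipschitz estimate is in hand, so I would prove the Lipschitz bound directly and deduce continuity from it.

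For the Lipschitz bound, the cleanest route is again the integral representation. Fix $\vec x,\vec y$ with, say, $\|\vec x\|_\infty\ge\|\vec y\|_\infty$. Split
\[
f^L(\vec x)-f^L(\vec y)=\int_0^{\|\vec y\|_\infty}\!\!\bigl(f(V_t^+(\vec x),V_t^-(\vec x))-f(V_t^+(\vec y),V_t^-(\vec y))\bigr)dt+\int_{\|\vec y\|_\infty}^{\|\vec x\|_\infty}\!\!f(V_t^+(\vec x),V_t^-(\vec x))\,dt.
\]
The second integral is bounded by $M\,\bigl|\|\vec x\|_\infty-\|\vec y\|_\infty\bigr|\le M\|\vec x-\vec y\|_\infty$. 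For the first, the integrand vanishes unless $\bigl(V_t^+(\vec x),V_t^-(\vec x)\bigr)\ne\bigl(V_t^+(\vec y),V_t^-(\vec y)\bigr)$, which forces some index $j$ with $t$ lying between the relevant thresholds for $x_j$ and $y_j$ (more precisely $\mathrm{sign}$-sensitive thresholds, but in all cases $t$ lies in an interval of length at most $|x_j-y_j|$); hence the measure of the bad set of $t$ is at most $\sum_j|x_j-y_j|=\|\vec x-\vec y\|_1$, and the first integral is bounded by $2M\|\vec x-\vec y\|_\infty$ after crude estimation. Combining, $|f^L(\vec x)-f^L(\vec y)|\le C_n\,M\,\|\vec x-\vec y\|_\infty$ for a dimensional constant $C_n$, which is the Lipschitz continuity claimed; piecewise linearity has already been established in the first step.

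The main obstacle is the bookkeeping in the Lipschitz estimate: the sets $V_t^\pm(\vec x)$ jump not only when $|x_j|$ crosses $t$ but also when some $x_j$ changes sign, so the ``bad'' $t$-set must be controlled by a single clean quantity. Carefully identifying that, for each coordinate $j$, the set of $t\ge 0$ on which $j$ is classified differently by $\vec x$ and by $\vec y$ has Lebesgue measure at most $|x_j-y_j|$ — checking the handful of sign combinations — is the one place where a genuine (though elementary) argument is needed; everything else is routine.
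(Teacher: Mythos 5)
Your proof is correct, and while your cone decomposition for piecewise linearity is essentially the paper's own (the paper uses the cones $\triangle_{m,\sigma}$ determined by a sign map $m$ and an ordering $\sigma$ of the $|x_i|$, and rewrites \eqref{eq:fL} by summation by parts to exhibit linearity on each cone), your Lipschitz estimate takes a genuinely different route. The paper stays on each cone: after Abel summation the coefficients of the linear form are differences of values of $f$, bounded by $2\max_{(A,B)\in\power_2(V)}f(A,B)$, which gives $|f^L(\vec x)-f^L(\vec y)|\le 2\max f\,\|\vec x-\vec y\|_1$ for $\vec x,\vec y$ in a common cone, and hence globally since the convex cones cover $\mathbb{R}^n$. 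You instead argue globally from the integral form \eqref{eq:Lovasz2int}: writing $M=\max_{(A,B)\in\power_2(V)}f(A,B)$, the tail over $[\|\vec y\|_\infty,\|\vec x\|_\infty]$ contributes at most $M\|\vec x-\vec y\|_\infty$, and the set of levels $t$ at which $(V_t^+,V_t^-)$ differs for $\vec x$ and $\vec y$ is covered by the per-coordinate bad sets, each of measure at most $|x_j-y_j|$ (your sign-change case, where the bad set is $[0,\max\{x_j,-y_j\})$ of length at most $|x_j-y_j|$, is exactly the point that needs checking), so the main term is at most $M\|\vec x-\vec y\|_1$. This buys you a bound valid on all of $\mathbb{R}^n$ at once, with no need to match linear pieces across cone boundaries or to run a segment-crossing argument, and it even recovers the paper's constant since $M\|\vec x-\vec y\|_1+M\|\vec x-\vec y\|_\infty\le 2M\|\vec x-\vec y\|_1$; the paper's computation, in exchange, produces the explicit linear formula on each cone in one stroke. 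One small slip: your intermediate assertion that the first integral is bounded by $2M\|\vec x-\vec y\|_\infty$ does not follow from the measure bound $\|\vec x-\vec y\|_1$ --- the correct statement is $M\|\vec x-\vec y\|_1\le nM\|\vec x-\vec y\|_\infty$ --- but your final conclusion with a dimensional constant $C_n$ is unaffected.
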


\begin{proof}
For a mapping $m:\{1,2,\ldots,n\}\to \{-1,1\}$ and a permutation $\sigma$ of $\{1,2,\ldots,n\}$, one defines a closed convex cone as follows
$$\triangle_{m,\sigma}:=\left\{\vec x\in\mathbb{R}^n:\abs{x_{\sigma(1)}}\leq\cdots\leq \abs{x_{\sigma(n)}}\mbox{ with } x_{\sigma(i)}m(i)\ge 0\right\},$$
and it can be readily seen that $\mathbb{R}^n=\bigcup_{m,\sigma}\triangle_{m,\sigma}$.

It suffices to prove that $f^L$ is linear and Lipschitz continuous with a Lipschitz constant $2\max_{(A,B)\in\power_2(V)}f(A,B)$ on each $\triangle_{m,\sigma}$. In fact, for given $m$ and $\sigma$ and any $\vec x\in \triangle_{m,\sigma}$, we have
\begin{align*}
f^L(\vec x)=&\sum_{i=0}^{n-1} (m(i+1)x_{\sigma(i+1)}-m(i)x_{\sigma(i)})f(V_{x_{\sigma(i)}}^+,V_{x_{\sigma(i)}}^-)\\
=&\sum_{i=1}^{n-1}x_{\sigma(i)}m(i)(f(V_{x_{\sigma(i-1)}}^+,V_{x_{\sigma(i-1)}}^-)-f(V_{x_{\sigma(i)}}^+,V_{x_{\sigma(i)}}^-))\\
&+x_{\sigma(n)}m(n)f(V_{x_{\sigma(n-1)}}^+,V_{x_{\sigma(n-1)}}^-).
\end{align*}
Since $m(i)$ and $f(V_{x_{\sigma(i)}}^+,V_{x_{\sigma(i)}}^-)$ are constants for given $m$ and $\sigma$, $f^L$ is linear on $\triangle_{m,\sigma}$. Moreover, for any $\vec x,\vec y \in \triangle_{m,\sigma}$,
\begin{align*}
\abs{f^L(\vec x)-f^L(\vec y)}
\le & \sum_{i=1}^{n-1}\abs{x_{\sigma(i)}-y_{\sigma(i)}}\abs{f(V_{x_{\sigma(i-1)}}^+,V_{x_{\sigma(i-1)}}^-)-f(V_{x_{\sigma(i)}}^+,V_{x_{\sigma(i)}}^-)}\\
&+\abs{x_{\sigma(n)}-y_{\sigma(n)}}f(V_{x_{\sigma(n-1)}}^+,V_{x_{\sigma(n-1)}}^-)\\
\le& 2\max_{(A,B)\in\power_2(V)}f(A,B)\Vert \vec x-\vec y\Vert_1.
\end{align*}
\end{proof}

The concept of submodular function was  introduced  by Lov\'asz to characterize the convexity of its Lov\'asz extension \cite{Lovasz1983}.

\begin{definition}[submodular function \cite{Lovasz1983}]\rm
A set-function $f:\power(V)\to \mathbb{R}$ is submodular if and only if,
for all subsets $A,B\subset V$,
$$f(A)+f(B)\ge f(A\cup B)+f(A\cap B).$$
\end{definition}

\begin{theorem}[\cite{Lovasz1983}, Proposition 4.1]\rm
\label{th:Lovasz-convex}
$f^L_o$ is convex if and only if $f$ is submodular.
\end{theorem}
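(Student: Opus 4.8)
The plan is to prove the two implications separately: the converse reduces to a short computation with characteristic vectors, while the forward direction is best obtained by exhibiting $f_o^L$ as a support function; throughout I use the standard normalization $f(\varnothing)=0$, under which the statement is usually understood. Suppose first that $f_o^L$ is convex. By Proposition \ref{def:rrrr}(b) it is positively one-homogeneous, so convexity upgrades to subadditivity: applying midpoint convexity to $\vec u+\vec v=2\cdot\frac{\vec u+\vec v}{2}$ gives $f_o^L(\vec u+\vec v)\le f_o^L(\vec u)+f_o^L(\vec v)$. Fix $A,B\subseteq V$ and take $\vec u=\vec 1_A$, $\vec v=\vec 1_B$. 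One checks directly from Definition \ref{def:originLovasz} that $f_o^L(\vec 1_A)=f(A)$, and since the superlevel sets $V_t=\{i:x_i>t\}$ of $\vec x=\vec 1_A+\vec 1_B$ are $A\cup B$ for $t\in[0,1)$ and $A\cap B$ for $t\in[1,2)$, the integral form \eqref{eq:Lovasz-inte} yields $f_o^L(\vec 1_A+\vec 1_B)=f(A\cup B)+f(A\cap B)$. Subadditivity then reads $f(A\cup B)+f(A\cap B)\le f(A)+f(B)$, which is exactly submodularity.

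Conversely, suppose $f$ is submodular. Set $B(f)=\{\vec y\in\mathbb{R}^n:\vec y(S)\le f(S)\text{ for all }S\subseteq V,\ \vec y(V)=f(V)\}$, where $\vec y(S)=\sum_{i\in S}y_i$. Given $\vec x$, let $\sigma$ be the ordering of Definition \ref{def:originLovasz}, write $W_i=\{\sigma(i+1),\dots,\sigma(n)\}$ (so $W_0=V$, $W_n=\varnothing$), and define the greedy vector $\vec y^\sigma$ by $y^\sigma_{\sigma(i)}=f(W_{i-1})-f(W_i)$. A summation by parts, writing $x_{\sigma(i)}=\sum_{j\le i}(x_{\sigma(j)}-x_{\sigma(j-1)})$ with $x_{\sigma(0)}=0$, gives for every $\vec y\in B(f)$ that $\langle\vec y,\vec x\rangle=\sum_{j=1}^n(x_{\sigma(j)}-x_{\sigma(j-1)})\,\vec y(W_{j-1})\le f_o^L(\vec x)$, using $\vec y(W_0)=\vec y(V)=f(V)$ for the (possibly negative) first term and $\vec y(W_{j-1})\le f(W_{j-1})$ for the remaining terms, whose coefficients $x_{\sigma(j)}-x_{\sigma(j-1)}$ are nonnegative; this step uses no submodularity, holds on all of $\mathbb{R}^n$, and becomes an equality for $\vec y=\vec y^\sigma$ because $\vec y^\sigma(W_i)=f(W_i)$ telescopes. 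Submodularity is invoked only to show $\vec y^\sigma\in B(f)$: writing $S=\{\sigma(i_1),\dots,\sigma(i_m)\}$ with $i_1<\cdots<i_m$ and $S_\ell=\{\sigma(i_\ell),\dots,\sigma(i_m)\}$, the diminishing-returns form of submodularity, applied with $S_{\ell+1}\subseteq W_{i_\ell}$ and $\sigma(i_\ell)\notin W_{i_\ell}$, gives $f(W_{i_\ell-1})-f(W_{i_\ell})\le f(S_\ell)-f(S_{\ell+1})$, and summing over $\ell$ yields $\vec y^\sigma(S)\le f(S)$, while $\vec y^\sigma(V)=f(V)$ telescopes. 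Hence $f_o^L(\vec x)=\max_{\vec y\in B(f)}\langle\vec y,\vec x\rangle$, a pointwise maximum of linear functionals, and therefore $f_o^L$ is convex.

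The main obstacle is precisely this last verification $\vec y^\sigma\in B(f)$: it is the only point where submodularity is genuinely needed, and it is exactly the polymatroid greedy exchange argument sketched above — everything else (the summation-by-parts bound, the identity $\langle\vec y^\sigma,\vec x\rangle=f_o^L(\vec x)$, and the converse direction) is routine bookkeeping. A more hands-on alternative for the forward direction avoids $B(f)$ altogether: $f_o^L$ is continuous and linear on each cone $\triangle_\sigma=\{\vec x:x_{\sigma(1)}\le\cdots\le x_{\sigma(n)}\}$, these cones cover $\mathbb{R}^n$, and two of them share a facet exactly when their orderings differ by an adjacent transposition; a short comparison of the two gradients across such a facet shows that the convexity inequality there is equivalent to the consecutive submodular inequality $f(T\cup\{p\})+f(T\cup\{q\})\ge f(T\cup\{p,q\})+f(T)$, after which one invokes the standard facts that consecutive submodularity implies full submodularity and that a continuous piecewise-linear function, linear on a complete fan of cones and convex across every ridge, is globally convex.
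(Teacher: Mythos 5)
Your proof is correct, but note that the paper itself does not prove this statement: Theorem \ref{th:Lovasz-convex} is imported from Lov\'asz (Proposition 4.1) as a known result, so there is no internal argument to compare against. What you give is the classical proof. For ``convex $\Rightarrow$ submodular'' you combine one-homogeneity with midpoint convexity to get subadditivity and evaluate at characteristic vectors, using $f_o^L(\vec 1_A)=f(A)$ and $f_o^L(\vec 1_A+\vec 1_B)=f(A\cup B)+f(A\cap B)$; both identities check out against Definition \ref{def:originLovasz} and the integral form \eqref{eq:Lovasz-inte}. For ``submodular $\Rightarrow$ convex'' you run Edmonds' greedy argument: the summation-by-parts bound $\langle\vec y,\vec x\rangle\le f_o^L(\vec x)$ correctly isolates the possibly negative first coefficient by using $\vec y(V)=f(V)$ there, equality for the greedy vector telescopes, and your ``diminishing returns'' step verifying $\vec y^\sigma\in B(f)$ is exactly submodularity applied to $A=S_{\ell+1}\cup\{\sigma(i_\ell)\}$, $B=W_{i_\ell}$. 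The conclusion that $f_o^L=\max_{\vec y\in B(f)}\langle\vec y,\cdot\rangle$ is a support function, hence convex, is sound.

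One caveat worth stating explicitly: the normalization $f(\varnothing)=0$ you assume is genuinely needed for the ``only if'' direction in the paper's setting, where $f:\power(V)\to[0,+\infty)$ is arbitrary. Since $f_o^L$ never sees $f(\varnothing)$, convexity of $f_o^L$ cannot control the submodular inequalities with $A\cap B=\varnothing$ unless one normalizes; this is indeed the convention under which Lov\'asz states the result, so flagging it, as you did, is the right move rather than a gap. For the ``if'' direction the normalization is harmless: resetting $f(\varnothing)$ to $0$ changes neither $f_o^L$ nor (because $f\ge 0$) submodularity. Your alternative sketch via linearity on the cones $\triangle_\sigma$ and convexity across adjacent-transposition ridges is also a standard valid route, though as written it quotes two facts (consecutive exchange implies full submodularity; ridge-wise convexity of a piecewise linear function implies global convexity) without proof, so the base-polytope argument should remain your main proof.
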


Theorem \ref{th:Lovasz-convex} inspires us to consider the set-pair form of submodular function.
A kind of set-pair submodular function was proposed \cite{BercziFrank2008}. 

\begin{definition}[set-pair submodular function \cite{BercziFrank2008}] \label{defn:pair-submodular}\rm
Let
$$
\power_2'(V)=\{(X_I,X_O):X_I\subset X_O\subset V\}.
$$
A function $p:\power_2'(V)\to \mathbb{R}$ is submodular if
 \begin{equation}\label{eq:p-submodular}
p(X_I,X_O)+p(Y_I,Y_O)\ge p(X_I\cap Y_I,X_O\cap Y_O)+p(X_I\cup Y_I,X_O\cup Y_O)
\end{equation}
for any $(X_I,X_O),(Y_I,Y_O)\in \power_2'(V)$.

By taking $p(X_I,X_O)=f(X_I,X_O\setminus X_I)$ and $f(A,B)=p(A,A\cup B)$, we can transform from $f:\power_2(V)\to \mathbb{R}$ to $p:\power_2'(V)\to \mathbb{R}$ and vice versa.
Such $f$ and $p$ are said to be {\sl equivalent}.
\end{definition}

Now we show necessary and sufficient conditions for the convexity of $f^L$.

\begin{theorem}\label{th:pair-Lovasz-convex}
Let $f:\power_2(V)\to [0,+\infty)$ is a set-pair functions satisfying $f(\varnothing,\varnothing)=0$.
Then $f^L$ is convex if and only if $\forall\,(A,B),(C,D)\in \power_2(V)$ 
\begin{equation}\label{eq:2-submodular}
f(A,B)+f(C,D)\ge f((A\cup C)\setminus (B\cup D),(B\cup D)\setminus(A\cup C))+f(A\cap C,B\cap D);
\end{equation}
if and only if $\forall\,(X_I,X_O),(Y_I,Y_O)\in \power_2'(V)$ the equivalent function $p$
satisifies
 \begin{equation}\label{eq:2-submodular-p}
p(X_I,X_O)+p(Y_I,Y_O)\ge p(X_I\cap Y_I,X_O\cap Y_O\setminus Z)
+
p((X_I\cup Y_I)\setminus Z,(X_O\cup Y_O)\setminus Z),
\end{equation}
where
$Z=( X_O\cap Y_I\setminus X_I)\cup (Y_O\cap X_I\setminus Y_I)$.
\end{theorem}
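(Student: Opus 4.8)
The plan is to establish the convexity criterion in two stages: first reduce convexity of $f^L$ to a pointwise inequality on characteristic vectors, then massage that inequality into the two stated forms. Since $f^L$ is one-homogeneous (Proposition \ref{pro:pro}(\ref{pro:pro-c})) and piecewise linear (Theorem \ref{pro:Local-Lip}), convexity of $f^L$ on $\mathbb{R}^n$ is equivalent to subadditivity, i.e. $f^L(\vec u+\vec v)\le f^L(\vec u)+f^L(\vec v)$ for all $\vec u,\vec v$. The key observation is that this needs only be checked on a spanning set of ``directions.'' Concretely I would use Proposition \ref{def:Lovasz1}: any $\vec x\ne\vec 0$ decomposes along its own canonical chain, so by linearity on the cones $\triangle_{m,\sigma}$ from Theorem \ref{pro:Local-Lip}, subadditivity of $f^L$ reduces to subadditivity tested on vectors of the form $\vec 1_{A,B}$ and $\vec 1_{C,D}$ whose chains are ``compatible'' — equivalently, it suffices to verify
\[
f^L(\vec 1_{A,B}+\vec 1_{C,D})\le f(A,B)+f(C,D)
\]
for all $(A,B),(C,D)\in\power_2(V)$, using \eqref{eq:fact}.

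Next I would compute the left-hand side explicitly. The vector $\vec y:=\vec 1_{A,B}+\vec 1_{C,D}$ takes values in $\{-2,-1,0,1,2\}$: it equals $2$ on $A\cap C$, $-2$ on $B\cap D$, $1$ on $(A\cup C)\setminus(B\cup D\cup(A\cap C))$, $-1$ on $(B\cup D)\setminus(A\cup C\cup(B\cap D))$, and $0$ elsewhere (in particular on the ``conflict'' set where one of $A,C$ and one of $B,D$ overlap, the contributions cancel). Writing the set-pair Lov\'asz extension of $\vec y$ via Proposition \ref{def:Lovasz1} with the chain $\{x\in V:\pm y_x>1\}\subset\{x\in V:\pm y_x>0\}$ — i.e. the two ``levels'' $(A\cap C,\,B\cap D)\subset((A\cup C)\setminus(B\cup D),\,(B\cup D)\setminus(A\cup C))$, each with coefficient $\lambda=1$ — yields exactly
\[
f^L(\vec 1_{A,B}+\vec 1_{C,D})=f\big((A\cup C)\setminus(B\cup D),(B\cup D)\setminus(A\cup C)\big)+f(A\cap C,B\cap D).
\]
One must check that this chain genuinely satisfies the hypotheses of Proposition \ref{def:Lovasz1} (disjointness of the top level, the two summation identities $\sum\lambda_i\vec 1_{V_i^\pm}=\vec y$ and $\sum\lambda_i=\|\vec y\|_\infty=2$), which holds precisely because $A\cap B=C\cap D=\varnothing$. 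Combining this identity with the reduced subadditivity criterion gives the equivalence with \eqref{eq:2-submodular} immediately. For the converse direction — that \eqref{eq:2-submodular} forces convexity, not just the special-case inequality — I would invoke the standard argument that a one-homogeneous piecewise-linear function which is subadditive on the generators of every maximal cone (and linear within each) is globally convex; here the $\vec 1_{A,B}$ span the extreme rays, and a general pair $\vec u,\vec v$ is handled by decomposing each along its chain and applying \eqref{eq:2-submodular} telescopically to consecutive chain elements.

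Finally, the passage from \eqref{eq:2-submodular} to the $p$-form \eqref{eq:2-submodular-p} is a change of variables via the dictionary $p(X_I,X_O)=f(X_I,X_O\setminus X_I)$, $f(A,B)=p(A,A\cup B)$ of Definition \ref{defn:pair-submodular}. Setting $(A,B)=(X_I,X_O\setminus X_I)$ and $(C,D)=(Y_I,Y_O\setminus Y_I)$, I would compute each of the four set-pair arguments appearing in \eqref{eq:2-submodular} and rewrite them in terms of $X_I,X_O,Y_I,Y_O$: one checks that $A\cap C=X_I\cap Y_I$, $B\cap D=(X_O\setminus X_I)\cap(Y_O\setminus Y_I)=(X_O\cap Y_O)\setminus Z$ with $Z=(X_O\cap Y_I\setminus X_I)\cup(Y_O\cap X_I\setminus Y_I)$, and similarly that $(A\cup C)\setminus(B\cup D)=(X_I\cup Y_I)\setminus Z$ and $(B\cup D)\setminus(A\cup C)=(X_O\cup Y_O)\setminus\big((X_I\cup Y_I)\cup(\text{the part already absorbed})\big)$ — i.e. the second summand on the right of \eqref{eq:2-submodular} becomes $p\big((X_I\cup Y_I)\setminus Z,(X_O\cup Y_O)\setminus Z\big)$ and the first becomes $p(X_I\cap Y_I,(X_O\cap Y_O)\setminus Z)$ after applying $f(A,B)=p(A,A\cup B)$. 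I expect the main obstacle to be exactly this bookkeeping: verifying the set identities for $Z$ and the two ``merged'' arguments is error-prone because the overlap set $Z$ must be tracked carefully through both the union-minus and the intersection terms, and one has to confirm that $f(A,B)=p(A,A\cup B)$ is applied to a pair whose union is indeed the claimed $X_O$-type set (e.g. $X_I\cap Y_I$ together with $(X_O\cap Y_O)\setminus Z$ has union $(X_O\cap Y_O)\setminus Z$ only after checking $X_I\cap Y_I\subset X_O\cap Y_O$ and $X_I\cap Y_I$ is disjoint from $Z$). The reduction to characteristic vectors and the explicit evaluation of $f^L(\vec 1_{A,B}+\vec 1_{C,D})$ are the conceptual heart; the rest is careful but routine set algebra.
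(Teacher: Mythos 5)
Your necessity direction (convexity $\Rightarrow$ \eqref{eq:2-submodular}) and the translation into the $p$-form are sound and essentially identical to the paper's: the identity $\vec 1_{A,B}+\vec 1_{C,D}=\vec 1_{(A\cup C)\setminus(B\cup D),(B\cup D)\setminus(A\cup C)}+\vec 1_{A\cap C,B\cap D}$, evaluation of $f^L$ at this vector through the chain of Proposition \ref{def:Lovasz1} (with the caveat, which you and the paper both gloss, that when $A\cap C=B\cap D=\varnothing$ the normalization $\sum\lambda_i=\|\vec y\|_\infty$ fails and one must invoke $f(\varnothing,\varnothing)=0$), and then one-homogeneity plus convexity. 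The set-algebra bookkeeping for $Z$ is indeed routine.

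The genuine gap is in the sufficiency direction, which is the substantive half of the theorem. You reduce convexity to subadditivity (fine, by one-homogeneity), but then assert that subadditivity need only be ``tested on vectors of the form $\vec 1_{A,B}$ and $\vec 1_{C,D}$'' and invoke ``the standard argument that a one-homogeneous piecewise-linear function which is subadditive on the generators of every maximal cone (and linear within each) is globally convex.'' No such standard theorem is available, and this reduction is precisely what has to be proved: knowing $f^L(\vec u+\vec v)\le f^L(\vec u)+f^L(\vec v)$ for $\vec u,\vec v$ characteristic vectors says nothing directly about a wall-crossing between two adjacent cones $\triangle_{m,\sigma}$ in dimension $n\ge 3$, since $\vec 1_{A,B}+\vec 1_{C,D}$ generally lands in an unrelated cone. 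Your fallback, decomposing $\vec u$ and $\vec v$ along their chains and ``applying \eqref{eq:2-submodular} telescopically,'' is really an uncrossing argument, and it is exactly here that the work lies: (i) with mere (non-strict) submodularity the uncrossing steps do not strictly decrease anything, so termination needs a separate potential or perturbation argument; (ii) even after reaching a chain family, its total weight is $\|\vec u\|_\infty+\|\vec v\|_\infty$, not $\|\vec u+\vec v\|_\infty$, so identifying the resulting value with $f^L(\vec u+\vec v)$ requires the chain characterization of Proposition \ref{def:Lovasz1} together with careful handling of the $(\varnothing,\varnothing)$ slack. The paper supplies all of this through a different mechanism: it introduces the convexified minimum $\hat f_N$ of Lemma \ref{lemma:strictmod1}, proves in Lemma \ref{lemma:strictmod2} that $\hat f_N=f^L$ for \emph{strictly} submodular $f$ (the uncrossing contradiction is driven by strictness), exhibits the strictly submodular $g(A,B)=\sqrt{|A|+|B|}$ in Lemma \ref{lemma:strictmod3}, and then treats a general $f$ satisfying \eqref{eq:2-submodular} by perturbing to $f+\alpha g$ and letting $\alpha\to 0$. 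Your proposal omits this entire apparatus, so as written the hard implication is asserted rather than proved.
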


Three lemmas below are needed in proving Theorem \ref{th:pair-Lovasz-convex}.

\begin{lemma}\label{lemma:strictmod1}
For $\vec x\in\mathbb{R}^n$, $N\in\mathbb{N}^+$, and $N > 2\norm{\vec x}$,
let
\begin{equation}\label{eq:fhat}
\hat{f}_N(\vec x)=
\min\left\{\sum_{(A,B)\in \power_2(V)}\lambda_{A,B}f(A,B)\left|\begin{array}{l}
 \sum\lambda_{A,B}\vec 1_{A,B}=\vec x,  \\
 \sum\lambda_{A,B}\leq N ,\\
 \lambda_{A,B}\ge 0.
\end{array}
\right.\right\}.
\end{equation}
Then $\hat{f}_N(\vec x)$ is convex.
\end{lemma}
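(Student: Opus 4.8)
The plan is to recognize $\hat{f}_N$ as the value function of a parametric linear program and to exploit the standard fact that such value functions are convex. Concretely, fix $N$ and view the right-hand side of \eqref{eq:fhat} as a minimization over the variable vector $(\lambda_{A,B})_{(A,B)\in\power_2(V)}\in\mathbb{R}_{\ge 0}^{|\power_2(V)|}$, subject to the affine constraints $\sum_{A,B}\lambda_{A,B}\vec 1_{A,B}=\vec x$ and $\sum_{A,B}\lambda_{A,B}\le N$. The objective $\sum_{A,B}\lambda_{A,B}f(A,B)$ is linear in $\lambda$, and for every $\vec x$ with $\norm{\vec x}<N/2$ the feasible set is nonempty (for instance one may write $\vec x$ as a nonnegative combination of characteristic vectors $\vec 1_{\{i\},\varnothing}$ and $\vec 1_{\varnothing,\{i\}}$ with coefficients $|x_i|$, whose total is $\sum_i|x_i|\le\norm{\vec x}<N$, padding with the all-zero pair $\vec 1_{\varnothing,\varnothing}$ to meet the slack if needed, using $f(\varnothing,\varnothing)=0$) and bounded (since all $\lambda_{A,B}\ge 0$ sum to at most $N$ and $f\ge 0$), so the minimum is attained and finite.

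First I would make the convexity argument explicit. Take $\vec x_0,\vec x_1\in\mathbb{R}^n$ with $\norm{\vec x_0},\norm{\vec x_1}<N/2$, let $t\in[0,1]$, and set $\vec x_t=(1-t)\vec x_0+t\vec x_1$; note $\norm{\vec x_t}<N/2$ by the triangle inequality, so $\hat f_N(\vec x_t)$ is well-defined. Let $(\lambda^0_{A,B})$ and $(\lambda^1_{A,B})$ be optimal feasible vectors for $\vec x_0$ and $\vec x_1$ respectively. Then $\lambda^t_{A,B}:=(1-t)\lambda^0_{A,B}+t\lambda^1_{A,B}$ is nonnegative, satisfies $\sum_{A,B}\lambda^t_{A,B}\vec 1_{A,B}=(1-t)\vec x_0+t\vec x_1=\vec x_t$, and obeys $\sum_{A,B}\lambda^t_{A,B}\le (1-t)N+tN=N$; hence it is feasible for $\vec x_t$. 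Consequently
\[
\hat f_N(\vec x_t)\le\sum_{A,B}\lambda^t_{A,B}f(A,B)=(1-t)\sum_{A,B}\lambda^0_{A,B}f(A,B)+t\sum_{A,B}\lambda^1_{A,B}f(A,B)=(1-t)\hat f_N(\vec x_0)+t\hat f_N(\vec x_1),
\]
which is exactly the convexity inequality. Since $\hat f_N$ is finite-valued on an open set containing every point with $\norm{\vec x}<N/2$ and $N>2\norm{\vec x}$ is the stated hypothesis, this proves convexity on the relevant domain.

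I do not anticipate a genuinely hard step here: the argument is the textbook one that the infimal convolution / parametric-LP value function inherits convexity from a linear objective and jointly affine constraints. The only points demanding a little care are (i) checking feasibility and boundedness so that the minimum is actually attained (this is where $f(\varnothing,\varnothing)=0$ and the slack condition $\sum\lambda_{A,B}\le N$ rather than $=N$ are used, so that one can always enlarge the support harmlessly), and (ii) confirming that the domain on which $\hat f_N$ is evaluated is convex, which follows from the subadditivity of $\norm{\cdot}$. Everything else is the one-line interpolation of optimal multipliers displayed above.
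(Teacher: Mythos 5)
Your main argument is exactly the paper's: interpolate optimal coefficient vectors for the two endpoints, observe that the convex combination is again feasible for the interpolated point, and conclude convexity from linearity of the objective; so in substance the proof matches. One small slip in your feasibility remark: you claim the singleton decomposition has total weight $\sum_i|x_i|\le\norm{\vec x}$, but $\sum_i|x_i|=\Vert\vec x\Vert_1$ can exceed $\norm{\vec x}=\Vert\vec x\Vert_\infty$ (and hence possibly $N$), so this particular witness need not be feasible; the correct justification, which the paper uses, is the chain decomposition of Proposition \ref{def:Lovasz1}, whose coefficients sum to $\norm{\vec x}<N$ (also, no padding by $(\varnothing,\varnothing)$ or assumption $f(\varnothing,\varnothing)=0$ is needed, since the constraint is $\sum\lambda_{A,B}\le N$ rather than an equality). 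With that repair the proof is complete and identical in spirit to the paper's.
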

\begin{proof}
The fact that $\hat{f}_N(\vec x)$ is well defined emerges from Proposition \ref{def:Lovasz1}.
Given $\vec x,\vec y\in \mathbb{R}^n$, from \eqref{eq:fhat}, we deduce
$$\hat{f}_N(\vec x)=\sum_{(A,B)\in \power_2(V)}\alpha_{A,B}f(A,B)$$
holds for some $\alpha_{A,B}\ge 0$ with $\sum\alpha_{A,B}\vec 1_{A,B}=\vec x$. Similarly,
$$\hat{f}_N(\vec y)=\sum_{(A,B)\in \power_2(V)}\beta_{A,B}f(A,B)$$
holds for some $\beta_{A,B}\ge 0$ with $\sum\beta_{A,B}\vec 1_{A,B}=\vec y$.

Let $\lambda_{A,B}=t \alpha_{A,B}+(1-t)\beta_{A,B}$ with $t\in[0,1]$. Immediately, we have
$$\vec z := t\vec x+(1-t)\vec y = \sum_{(A,B)\in \power_2(V)} \lambda_{A,B}\vec 1_{A,B}$$ with $\sum\lambda_{A,B}\leq N$ and $\lambda_{A,B}\ge 0$,
and then
$$\hat{f}_N(\vec z)\leq \sum_{(A,B)\in \power_2(V)} \lambda_{A,B}f(A,B)= t \hat{f}_N(\vec x)+(1-t)\hat{f}_N(\vec  y).$$
\end{proof}


\begin{definition}\rm
A set-pair function $f:\power_2(V)\to [0,+\infty)$ is said to be strictly submodular if,
the inequality \eqref{eq:2-submodular} holds. 
Moreover, the equality holds if and only if $(A,B)\subset (C,D)$ or $(A,B)\supset (C,D)$.
\end{definition}

\begin{lemma}\label{lemma:strictmod2}
If $f$ is strictly submodular, then $\hat{f}_N(\vec x) = f^L(\vec x)$ for $N>c\norm{\vec x}$ with $c>1$.
\end{lemma}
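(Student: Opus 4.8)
The plan is to show that the minimization problem defining $\hat{f}_N(\vec x)$ in \eqref{eq:fhat} is, when $f$ is strictly submodular and $N$ large, solved uniquely (up to reordering) by the chain decomposition of $\vec x$ furnished by Proposition \ref{def:Lovasz1}, which in turn evaluates to $f^L(\vec x)$. First I would observe the easy inequality $\hat{f}_N(\vec x)\le f^L(\vec x)$: the canonical chain $(V_{\sigma(n-1)}^+,V_{\sigma(n-1)}^-)\subset\cdots\subset (V_{\sigma(0)}^+,V_{\sigma(0)}^-)=(V_0^+,V_0^-)$ with coefficients $\lambda_i=|x_{\sigma(i+1)}|-|x_{\sigma(i)}|$ is a feasible point for \eqref{eq:fhat} as soon as $N\ge\norm{\vec x}$ (in particular for $N>c\norm{\vec x}$, $c>1$), and by Proposition \ref{def:Lovasz1} it attains the value $f^L(\vec x)$, so the minimum cannot exceed this.

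For the reverse inequality I would take any feasible collection $\{\lambda_{A,B}\}$ with $\sum\lambda_{A,B}\vec 1_{A,B}=\vec x$, $\sum\lambda_{A,B}\le N$, $\lambda_{A,B}\ge 0$, achieving the minimum, and run an \emph{uncrossing} argument. If two pairs $(A,B)$ and $(C,D)$ with positive coefficients are not comparable under $\subset$ in the set-pair order, I replace the weight $\epsilon=\min\{\lambda_{A,B},\lambda_{C,D}\}$ on each of them by adding weight $\epsilon$ to the ``join'' $((A\cup C)\setminus(B\cup D),(B\cup D)\setminus(A\cup C))$ and the ``meet'' $(A\cap C,B\cap D)$. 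This operation preserves $\sum\lambda_{A,B}\vec 1_{A,B}=\vec x$ — one checks this coordinatewise, the point being that the characteristic-vector identity $\vec 1_{A,B}+\vec 1_{C,D}=\vec 1_{(A\cup C)\setminus(B\cup D),(B\cup D)\setminus(A\cup C)}+\vec 1_{A\cap C,B\cap D}$ holds precisely because $A\cap B=C\cap D=\varnothing$ — and by strict submodularity \eqref{eq:2-submodular} it strictly decreases $\sum\lambda_{A,B}f(A,B)$ unless the two pairs were already comparable. It also does not increase $\sum\lambda_{A,B}$. Hence at a minimizer the support of $\{\lambda_{A,B}\}$ must form a chain; discarding the pair $(\varnothing,\varnothing)$ (which contributes $f(\varnothing,\varnothing)=0$) and, if $N$ exceeds $\norm{\vec x}=\sum\lambda_i\vec 1$, noting the total weight need not be padded, we are left with a chain decomposition of $\vec x$ of total length $\norm{\vec x}$, to which Proposition \ref{def:Lovasz1} applies and gives the value $f^L(\vec x)$. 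Therefore $\hat{f}_N(\vec x)\ge f^L(\vec x)$, and combined with the first step we conclude $\hat{f}_N(\vec x)=f^L(\vec x)$.

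A few technical points need care. One must make sure the uncrossing terminates: since each step strictly reduces the objective among finitely many possible supports (or one argues via a suitable potential such as $\sum_{A,B}\lambda_{A,B}(|A|+|B|)$ or the number of incomparable pairs weighted appropriately), the process halts at a chain; alternatively one invokes compactness of the feasible polytope and the fact that the minimum is attained, then argues the minimizer is already chain-supported. One also must confirm the constraint $\sum\lambda_{A,B}\le N$ is not violated — it is only relaxed — and that when $N>c\norm{\vec x}$ with $c>1$ there is genuine slack, so that padding issues (needed only to make $\hat f_N$ finite everywhere, already handled in Lemma \ref{lemma:strictmod1}) do not interfere; in fact the extremal chain has total weight exactly $\norm{\vec x}<N$.

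The main obstacle I anticipate is the bookkeeping in the uncrossing step: verifying the coordinatewise identity $\vec 1_{A,B}+\vec 1_{C,D}=\vec 1_{(A\cup C)\setminus(B\cup D),\,(B\cup D)\setminus(A\cup C)}+\vec 1_{A\cap C,\,B\cap D}$ for disjoint pairs (one has to chase through the cases according to whether a vertex $j$ lies in $A$, $B$, $C$, $D$ or none, keeping track of the fact that a vertex could a priori be in $A$ and $D$, or $B$ and $C$, so the ``signed'' cancellation is what produces the asymmetric set differences), and then confirming that this is exactly the combination appearing on the right-hand side of the strict submodularity inequality \eqref{eq:2-submodular}, so that a strict decrease is guaranteed unless $(A,B)$ and $(C,D)$ are comparable. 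Once that identity and its matching with \eqref{eq:2-submodular} are pinned down, the rest is a short convexity-free extremal argument.
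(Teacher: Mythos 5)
Your proposal is correct and follows essentially the same route as the paper: take a minimizer of \eqref{eq:fhat} (which exists by Lemma \ref{lemma:strictmod1} and compactness), uncross any two incomparable supported pairs via the identity $\vec 1_{A,B}+\vec 1_{C,D}=\vec 1_{(A\cup C)\setminus(B\cup D),(B\cup D)\setminus(A\cup C)}+\vec 1_{A\cap C,B\cap D}$, use strict submodularity to contradict minimality, and conclude the support is a chain of total weight $\norm{\vec x}$ so that Proposition \ref{def:Lovasz1} gives the value $f^L(\vec x)$. The only difference is cosmetic: you state the easy inequality $\hat f_N\le f^L$ separately, while the paper gets both directions at once from the chain-supported minimizer, citing \eqref{eq:lambdaixj} for the fact that the chain's total weight is $\norm{\vec x}$.
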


\begin{proof}
Given $\vec x\in \mathbb{R}^n$, according to Lemma \ref{lemma:strictmod1},
there exist $\lambda_{A, B}\ge 0, \, \forall (A, B)\in P_2(V)$ with $\sum
\lambda_{A, B} \vec 1_{A,B}= \vec x$ and $\sum \lambda_{A, B}\le N$ such that
$$
\hat{f}_N(\vec x)=\sum_{(A,B)\in \power_2(V)}\lambda_{A,B}f(A,B).
$$
No loss of generality, we can assume $\lambda_{\varnothing,\varnothing}=0$.


We claim:  if $\lambda_{A,B}\ge\lambda_{C,D}>0$, then
either $(A,B)\subset(C,D)$ or $(C,D)\subset (A,B)$. Suppose the contrary and let
\begin{equation*}
\left\{\begin{split}
&\lambda'_{A,B}=\lambda_{A,B}-\lambda_{C,D},\\
&\lambda'_{C,D}=0,\\
&\lambda'_{A',B'}=\lambda_{A',B'}+\lambda_{C,D},\\
&\lambda'_{C',D'}=\lambda_{C',D'}+\lambda_{C,D},\\
&\lambda'_{E,F}=\lambda_{E,F},\forall\,(E,F)\in\power_2(V)\setminus\set{(A,B),(C,D),(A',B'),(C',D')},
\end{split}
\right.
\end{equation*}
where
$$
A'=(A\cup C)\setminus (B\cup D), B'=(B\cup D)\setminus(A\cup C), C'=A\cap C,
D'=B\cap D.
$$
Then it can be easily verified that
$$\sum\limits_{(P,Q)\in\power_2(V)}\lambda'_{P,Q}=\sum\limits_{(P,Q)\in\power_2(V)}\lambda_{P,Q}\;
\text{ and }\sum\limits_{(P,Q)\in\power_2(V)}\lambda'_{P,Q}\vec 1_{P,Q}=\vec x.$$
Direct calculation shows
\begin{align*}
&\sum\limits_{(P,Q)\in\power_2(V)}\lambda'_{P,Q}f(P,Q)-\sum\limits_{(P,Q)\in\power_2(V)}\lambda_{P,Q}f(P,Q)\\
=\,&\sum\limits_{(P,Q)\in\power_2(V)}(\lambda'_{P,Q}-\lambda_{P,Q})f(P,Q)\\
=\,&\lambda_{C,D}(-f(A,B)-f(C,D)+f(A',B')+f(C',D'))<0,
\end{align*}
provided the strict submodularity of $f$.
This contradicts the minimality of $\hat{f}(\vec x)$.
According to the mathematical induction we obtain $ (\varnothing,\varnothing)\ne(V^+_p,V^-_p)\subset \cdots\subset(V^+_0,V^-_0)$ with
$$
\hat{f}_N(\vec x)=\sum_{i=0}^p\lambda_{V^+_i,V^-_i}f(V^+_i,V^-_i).
$$
Moreover,  we have   $\sum_{i=0}^p\lambda_{V^+_i,V^-_i}=\|\vec x\|_\infty$ via
\eqref{eq:lambdaixj}. After Proposition \ref{def:Lovasz1}, $\hat{f}_N(\vec x) = f^L(\vec x)$.
\end{proof}
\begin{lemma}\label{lemma:strictmod3}
The function
$$g(A,B):=\sqrt{\abs{A}+\abs{B}}$$
 is strictly submodular.
\end{lemma}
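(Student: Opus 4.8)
The plan is to verify the strict submodularity of $g(A,B)=\sqrt{|A|+|B|}$ by reducing the inequality \eqref{eq:2-submodular} to a purely numerical statement about the square-root function. First I would observe that $g$ depends on $(A,B)$ only through the integer $|A|+|B|$, so it suffices to track how this quantity behaves under the two operations appearing in \eqref{eq:2-submodular}. Writing $a=|A|$, $b=|B|$, $c=|C|$, $d=|D|$, and letting $E=(A\cup C)\setminus(B\cup D)$, $F=(B\cup D)\setminus(A\cup C)$, I would compute $|A\cap C|+|B\cap D|$ and $|E|+|F|$ in terms of cardinalities of the relevant Boolean combinations of $A,B,C,D$ (using that $A\cap B=C\cap D=\varnothing$). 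The key bookkeeping fact I expect to establish is
\begin{equation*}
\bigl(|E|+|F|\bigr)+\bigl(|A\cap C|+|B\cap D|\bigr)\;=\;(|A|+|B|)+(|C|+|D|)-2r,
\end{equation*}
where $r=|A\cap D|+|B\cap C|\ge 0$ counts the vertices that are assigned opposite signs by the two pairs; moreover $|A\cap C|+|B\cap D|\le\min\{|A|+|B|,|C|+|D|\}$.

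With $s:=|A|+|B|$, $t:=|C|+|D|$, $u:=|A\cap C|+|B\cap D|$, so that $|E|+|F|=s+t-u-2r$, the claim \eqref{eq:2-submodular} becomes the elementary inequality
\begin{equation*}
\sqrt{s}+\sqrt{t}\;\ge\;\sqrt{s+t-u-2r}+\sqrt{u},\qquad 0\le u\le\min\{s,t\},\; r\ge0.
\end{equation*}
I would prove this by the standard concavity/superadditivity argument for $\sqrt{\cdot}$: since $\phi(x)=\sqrt{x}$ is concave and $\phi(0)=0$, it is subadditive, and more precisely, for $0\le u\le s$ one has $\sqrt{s}\ge\sqrt{s-u}+\bigl(\sqrt{s}-\sqrt{s-u}\bigr)$ with the standard rearrangement giving $\sqrt{s}+\sqrt{t}\ge\sqrt{s+t-u}+\sqrt{u}$ (apply subadditivity $\sqrt{p}+\sqrt{q}\ge\sqrt{p+q}$ with $p=s-u$, $q=t$ after first peeling $\sqrt{u}$ off $\sqrt{s}$ via concavity). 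Dropping the extra $2r\ge 0$ only decreases the right-hand side, so the inequality holds a fortiori, proving \eqref{eq:2-submodular} for $g$.

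It remains to pin down the equality case. Tracing back through the reduction, the inequality $\sqrt{s}+\sqrt{t}\ge\sqrt{s+t-u}+\sqrt{u}$ is strict unless $u=0$ or $u=\min\{s,t\}$ (strict concavity of $\sqrt{\cdot}$), and the extra term forces $r=0$ as well. Then $r=0$ means $A\cap D=B\cap C=\varnothing$, i.e. $A,B,C,D$ are "sign-compatible," and $u=|A\cap C|+|B\cap D|\in\{0,\min\{s,t\}\}$ together with sign-compatibility must be shown to be equivalent to $(A,B)\subset(C,D)$ or $(C,D)\subset(A,B)$ (e.g. $u=|C|+|D|\le|A|+|B|$ forces $C\subseteq A\cup C= A$-side and $D\subseteq B$, hence $(C,D)\subset(A,B)$; the case $u=0$ with sign-compatibility forces the pairs to be disjoint, which in the context of a chain-type argument is the degenerate subcase, and one checks it still yields equality only in the nested situation or is absorbed since $g(\varnothing,\varnothing)=0$). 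I expect this equality-case analysis — correctly matching the combinatorial condition "$u\in\{0,\min\}$ and $r=0$" to "nested pairs" — to be the only genuinely delicate point; the main inequality itself is routine concavity of the square root.
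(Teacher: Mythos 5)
Your reduction to cardinalities is sound: with $A\cap B=C\cap D=\varnothing$ one indeed has $|E|+|F|=s+t-u-2r$, $u\le\min\{s,t\}$, and the lemma reduces to $\sqrt{s}+\sqrt{t}\ge\sqrt{s+t-u-2r}+\sqrt{u}$. This is in substance the same route as the paper, which passes to $A'=A\setminus D$, $B'=B\setminus C$, $C'=C\setminus B$, $D'=D\setminus A$ (your $r$-term), uses $|X\cup Y|+|X\cap Y|=|X|+|Y|$, and then invokes concavity of $\sqrt{t}$; your bookkeeping identity is just the numerical shadow of that. One small repair on the main inequality: the parenthetical recipe (``peel $\sqrt{u}$ off $\sqrt{s}$, then apply subadditivity with $p=s-u$, $q=t$'') does not parse as written, since it would need $\sqrt{s}\ge\sqrt{s-u}+\sqrt{u}$, which is false; the correct one-line argument is the decreasing-increments form of concavity, $\sqrt{u+(s-u)}-\sqrt{u}\ge\sqrt{t+(s-u)}-\sqrt{t}$ for $u\le t$, which rearranges to $\sqrt{s}+\sqrt{t}\ge\sqrt{s+t-u}+\sqrt{u}$, and dropping $2r\ge0$ is then harmless, as you say.

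The genuine gap is the equality case, which is part of the statement (strict submodularity requires equality exactly for nested pairs). Your characterization ``strict unless $u=0$ or $u=\min\{s,t\}$'' is off: by strict concavity, equality in $\sqrt{s}+\sqrt{t}\ge\sqrt{s+t-u}+\sqrt{u}$ holds only when $u=\min\{s,t\}$; if $u=0$ and $s,t>0$ the inequality is strict (strict subadditivity), e.g. $A=\{1\}$, $B=\varnothing$, $C=\{2\}$, $D=\varnothing$ gives $2>\sqrt{2}$. Hence there is no genuine $u=0$ equality branch to handle, and your proposed way of handling it, that it ``is absorbed since $g(\varnothing,\varnothing)=0$,'' is not a valid step: if disjoint non-nested pairs did produce equality, the lemma would simply be false, so this case must be shown strict, not absorbed. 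With the equality condition stated correctly (equality forces $r=0$ from the second step and $u=\min\{s,t\}$ from the first), the conclusion is immediate: if $u=s\le t$, then $|A\cap C|+|B\cap D|=|A|+|B|$ forces $A\subseteq C$ and $B\subseteq D$, i.e. $(A,B)\subset(C,D)$ (and $r=0$ is then automatic); symmetrically for $u=t$. You should also record the converse direction, which your write-up never checks but which is needed for the ``if and only if'': when $(A,B)\subset(C,D)$ one has $u=s$, $r=0$, and both sides coincide, so nested pairs do give equality.
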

\begin{proof}
Given $(A,B),(C,D)\in \power_2(V)$, let $A'=A\setminus D$, $B'=B\setminus C$, $C'=C\setminus B$ and $D'=D\setminus A$. Then
\begin{align*}
&g((A\cup C)\setminus (B\cup D),(B\cup D)\setminus(A\cup C))+g(A\cap C,B\cap D)\\
=\,&\sqrt{\abs{A'\cup C'}+\abs{B'\cup D'}}+\sqrt{\abs{A'\cap C'}+\abs{B'\cap D'}}\\
\le \,& \sqrt{\abs{A'}+\abs{B'}}+\sqrt{\abs{C'}+\abs{D'}}\\
\le \,& \sqrt{\abs{A}+\abs{B}}+\sqrt{\abs{C}+\abs{D}}=g(A,B)+g(C,D),
\end{align*}
where the first inequality holds since the function $\sqrt{t}$ is strictly convex.
Meanwhile, we can easily see that the equality holds if and only if $(A,B)\subset (C,D)$ or $(C,D)\subset (A,B)$. The proof is thus completed.
\end{proof}

\begin{proof}[Proof of Theorem \ref{th:pair-Lovasz-convex}]
Suppose that $f$ satisfies \eqref{eq:2-submodular}.
For any $ \alpha>0$,  $f+\alpha g$ is strictly submodular according to Lemma \ref{lemma:strictmod3}.
Thus, by Lemma \ref{lemma:strictmod2}, we have
\begin{equation}\label{eq:fl1}
f^L+\alpha g^L=(f+\alpha g)^L=\widehat{f+\alpha g}\ge \hat{f}.
\end{equation}

Given $\vec x\in \mathbb{R}^n$,  set $\hat{f}=\hat{f}_N$ for fixed $N>2\norm{\vec x}$. Hence,   \eqref{eq:fhat} leads to
$$\hat{f}(\vec x)=\sum_{(A,B)\in \power_2(V)}\lambda_{A,B}f(A,B)$$
 for some $\lambda_{A,B}\ge 0$ with $\sum\lambda_{A,B}\vec 1_{A,B}=\vec x$, $\sum\lambda_{A,B}<N$. Then
\begin{align}
f^L(\vec x)+\alpha g^L(\vec x) = \widehat{f+\alpha g}(\vec x)&\le  \sum_{(A,B)\in \power_2(V)}\lambda_{A,B}(f(A,B)+\alpha g(A,B)) \nonumber\\
&= \hat{f}(\vec x)+\alpha \sum_{(A,B)\in \power_2(V)}\lambda_{A,B} g(A,B) \nonumber \\
&\le  \hat{f}(\vec x)+\alpha \sum_{(A,B)\in \power_2(V)}\lambda_{A,B}\sqrt{n}\nonumber\\
&\le   \hat{f}(\vec x)+\alpha N \sqrt{n}.\label{eq:fl2}
\end{align}
Letting $\alpha\to 0$ in \eqref{eq:fl1} and \eqref{eq:fl2} yields
$$f^L(\vec x) = \hat{f}(\vec x),$$
and by Lemma \ref{lemma:strictmod1}, $\hat{f}(\vec x)$ is convex, so is $f^L(\vec x)$.

On the other hand, if $f^L(\vec x)$ is convex, then
\begin{align*}
&f((A\cup C)\setminus (B\cup D),(B\cup D)\setminus(A\cup C))+f(A\cap C,B\cap D)\\
=\;&f^L(\vec 1_{(A\cup C)\setminus (B\cup D),(B\cup D)\setminus(A\cup C)}+\vec 1_{A\cap C,B\cap D}) \quad [\text{Proposition \ref{def:Lovasz1}}]\\
=\;&f^L(\vec 1_{A,B}+\vec 1_{C,D}) \\
=\;& 2 f^L({(\vec 1_{A,B}+\vec 1_{C,D})}/{2}) \quad [\text{Proposition \ref{pro:pro}: \eqref{pro:pro-c}}]\\
\le\;&  f^L(\vec 1_{A,B})+f^L(\vec 1_{C,D}) \quad [\text{convexity}]\\
=\;& f(A,B)+f(C,D), \quad [\text{Equation \eqref{eq:fact}}]
\end{align*}
where we have used $\vec 1_{A,B}+\vec 1_{C,D} = \vec 1_{(A\cup C)\setminus (B\cup D),(B\cup D)\setminus(A\cup C)}+\vec 1_{A\cap C,B\cap D}$ in the third line. Thus, \eqref{eq:2-submodular} is true.

Finally, let
$X_I=A,~X_O=A\cup B,~Y_I=C,~Y_O=C\cup D$, then
$$Z=( X_O\cap Y_I\setminus X_I)\cup (Y_O\cap X_I\setminus Y_I)=(B\cap C)\cup(D\cap A).$$
By taking $f(A,B)=p(A,A\cup B)$, we can translate inequality \eqref{eq:2-submodular-p} into
\begin{align*}
&f(A,B) +f(C,D) =  p(X_I,X_O)+p(Y_I,Y_O)\\
\ge\;& p(X_I\cap Y_I,X_O\cap Y_O\setminus Z)
+
p((X_I\cup Y_I)\setminus Z,(X_O\cup Y_O)\setminus Z)\\
=\;&p(A\cap C, (A\cap C)\cup (B\cap D))\\
+&p((A\cup C)\setminus (B\cup D),((A\cup C)\setminus (B\cup D))\cup((B\cup D)\setminus(A\cup C)))\\
=\;&f(A\cap C,B\cap D)+f((A\cup C)\setminus (B\cup D),(B\cup D)\setminus(A\cup C)),
\end{align*}
which means \eqref{eq:2-submodular} and \eqref{eq:2-submodular-p} are equivalent.

Hence $f^L$ is convex if and only if either \eqref{eq:2-submodular}  or \eqref{eq:2-submodular-p} holds.
\end{proof}

Comparing  \eqref{eq:2-submodular-p} of Theorem \ref{th:pair-Lovasz-convex} to \eqref{eq:p-submodular} of Definition \ref{defn:pair-submodular}, we are able to deduce that the submodularity introduced in Definition \ref{defn:pair-submodular} for a set-pair function fails to ensure
an extension of  the equivalence stated in Theorem \ref{th:Lovasz-convex} between convexity and submodularity for $f_o^L$ into $f^L$ (i.e., Definition \ref{defn:pair-submodular}  is neither necessary nor sufficient  for $f^L$ to be convex), whereas \eqref{eq:2-submodular} succeeds. In such sense, we might call the set-pair function satisfying \eqref{eq:2-submodular} or \eqref{eq:2-submodular-p} to be submodular.
However, both \eqref{eq:2-submodular} and \eqref{eq:2-submodular-p} are not so easy-looking that we give a concise necessary condition for $f^L$ to be convex.

\begin{definition}\rm
A set-pair-function $f:\power_2(V)\to \mathbb{R}$ is {\sl partially submodular} if and only if it is submodular for each component, i.e.,
\begin{equation*}\label{eq:pair-Lovasz-f-1-1}
f(A,B)+f(A,D)\ge f(A,B\cup D)+f(A,B\cap D),
\end{equation*}
\begin{equation*}\label{eq:pair-Lovasz-f-1-2}
f(A,B)+f(C,B)\ge f(A\cup C,B)+f(A\cap C,B),
\end{equation*}
 for all subsets $A,B,C,D\subset V$ with $A\cap B=A\cap D=C\cap B=\varnothing$.
\end{definition}



%
%
%
%
%
%
%
%

\begin{corollary}\label{cor:convex-submodular}
If $f^L$ is convex, then $f$ must be partially submodular.
\end{corollary}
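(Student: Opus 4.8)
The plan is to read off partial submodularity directly from the submodularity-type characterization of convexity in Theorem~\ref{th:pair-Lovasz-convex}. Since $f^L$ is assumed convex (and $f(\varnothing,\varnothing)=0$, as in the hypothesis of that theorem), Theorem~\ref{th:pair-Lovasz-convex} tells us that \eqref{eq:2-submodular} holds for \emph{all} pairs $(A,B),(C,D)\in\power_2(V)$. The whole proof then amounts to feeding cleverly chosen pairs into \eqref{eq:2-submodular} so that the awkward combinations $(A\cup C)\setminus(B\cup D)$, $(B\cup D)\setminus(A\cup C)$, $A\cap C$, $B\cap D$ degenerate into the clean expressions appearing in the definition of partial submodularity.

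First I would handle submodularity in the second slot. Fix $A,B,D\subset V$ with $A\cap B=A\cap D=\varnothing$ and apply \eqref{eq:2-submodular} to the pair $(A,B)$ together with $(C,D):=(A,D)$, i.e.\ take $C=A$. Because $A$ is disjoint from both $B$ and $D$, one gets $(A\cup C)\setminus(B\cup D)=A\setminus(B\cup D)=A$ and $(B\cup D)\setminus(A\cup C)=(B\cup D)\setminus A=B\cup D$, while trivially $A\cap C=A$; the inequality \eqref{eq:2-submodular} therefore collapses to
\[
f(A,B)+f(A,D)\ \ge\ f(A,B\cup D)+f(A,B\cap D),
\]
which is precisely the first defining inequality of partial submodularity.

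Next I would treat the first slot symmetrically: fix $A,B,C\subset V$ with $A\cap B=C\cap B=\varnothing$ and apply \eqref{eq:2-submodular} to $(A,B)$ together with $(C,D):=(C,B)$, i.e.\ take $D=B$. Now $B$ is disjoint from both $A$ and $C$, so $(A\cup C)\setminus(B\cup D)=(A\cup C)\setminus B=A\cup C$ and $(B\cup D)\setminus(A\cup C)=B\setminus(A\cup C)=B$, while $B\cap D=B$, and \eqref{eq:2-submodular} becomes
\[
f(A,B)+f(C,B)\ \ge\ f(A\cup C,B)+f(A\cap C,B),
\]
the second defining inequality. Together these two give that $f$ is partially submodular.

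There is no serious obstacle here — the argument is essentially bookkeeping — but the one point that needs care is making sure the two specializations stay inside $\power_2(V)$ and that the disjointness hypotheses are exactly what is needed to kill the set differences; in particular one should check that all pairs produced (e.g.\ $(A,B\cap D)$ and $(A\cap C,B)$) are legitimate disjoint pairs, so that \eqref{eq:fact} and hence the conclusion of Theorem~\ref{th:pair-Lovasz-convex} genuinely apply to them. One might also remark that partial submodularity is strictly weaker than \eqref{eq:2-submodular}, which is why the corollary is stated only as a necessary condition.
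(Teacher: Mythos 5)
Your proposal is correct and follows essentially the same route as the paper: the paper's proof likewise invokes Theorem~\ref{th:pair-Lovasz-convex} to get \eqref{eq:2-submodular} and then sets $C=A$ and $D=B$ respectively, which is exactly your two specializations. You merely spell out the set-theoretic bookkeeping (that the disjointness hypotheses collapse the set differences) in more detail than the paper does.
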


\begin{proof}
If $f^L$ is convex, then $f$ must satisfy \eqref{eq:2-submodular}.
Setting $C=A$ and $D=B$ respectively in \eqref{eq:2-submodular},
we can find that $f$ is partially submodular.
\end{proof}

Similar to Corollary \ref{cor:convex-submodular}, if $p$ is  submodular, then its equivalent function $f$ must be  partially submodular.

Finally, we compare the set-pair Lov\'asz extension to the original one:

\begin{enumerate}
\item  In contrast to the succinct integral form \eqref{eq:Lovasz2int} of $f^L$, the integral form \eqref{eq:Lovasz-inte} of $f_o^L$ has an extra remainder term.


\item The original Lov\'asz extension is unable directly to deal with graph $3$-cut problems such as
the dual Cheeger-cut problem, whereas the set-pair Lov\'asz extension works.

\item The characterization of the convexity of $f_o^L$ is easier than $f^L$.
\end{enumerate}


\section{Applications to graph cut}
\label{sec:app}

A straightforward application of the original Lov\'asz extension \eqref{eq:foL} into a graph $3$-cut problem such as the dual Cheeger problem is not feasible. Instead, we will show in this section that the set-pair Lov\'asz extension \eqref{eq:fL} can succeed to find an explicit and equivalent continuous optimization problem for graph $3$-cut. To be more specific,
the set-pair Lov\'asz extension of the five set-pair functions is summarized in Table~\ref{tab:L}.




\begin{table}
\centering
\caption{\small Set-pair Lov\'asz extension of five object functions.}
\begin{tabular}{|l|l|}
               \hline
               Object function & Set-pair Lov\'asz extension  \\
               \hline
                 $F_1(A,B)=\abs{\partial A}+\abs{\partial B}$&$F_1^L(\vec x)=I(\vec x)$\\
                 \hline
               $F_2(A,B)=\abs{E(A,B)}$&$F_2^L(\vec x)=\frac{1}{2}\|\vec x\|-\frac{1}{2}I^+(\vec x)$\\
               \hline
               $G_1(A,B)=\,\vol(V)$&$G_1^L(\vec x)=\,\vol(V)\|\vec x\|_\infty$\\
               \hline
               $G_2(A,B)=\vol(A)+\vol(B)$&$G_2^L(\vec x)=\|\vec x\|$\\
               \hline
               $G_3(A,B)=\sum\limits_{X\in \{A,B\}}\min\limits_{Y\in \{X,X^c\}} \vol(Y)$&$G_3^L(\vec x)=\min\limits_{\alpha\in \mathbb{R}}\|\abs{\vec x}-\alpha \vec 1\|$\\
               \hline
            \end{tabular}
             \label{tab:L}
\end{table}



The first four functions in Table~\ref{tab:L} can be calculated directly according to Proposition \ref{def:Lovasz2}. In fact,
\begin{equation}\label{eq:F1}
F_1^L(\vec x) = \int_0^{\norm{\vec x}}\vert \partial V_t^+(\vec x)\vert +\vert \partial V_t^-(\vec x)\vert dt. 
\end{equation}
Then substituting
\begin{align*}
\vert \partial V_t^+(\vec x)\vert &= \sum_{i< j}  w_{ij}(\chi_{x_i \le t<x_j} + \chi_{x_j\le t<x_i}),\\
\vert \partial V_t^-(\vec x)\vert &= \sum_{i< j}  w_{ij}(\chi_{x_i<-t\le x_j}+\chi_{x_j<-t\le x_i})
\end{align*}
into Eq.~\eqref{eq:F1} yields
\begin{align*}
F_1^L(\vec x)&=  \sum_{i< j}w_{ij}\int_0^{\norm{\vec x}}  \chi_{x_i\le t<x_j} + \chi_{x_j\le t<x_i} +\chi_{x_i<-t\le x_j}+\chi_{x_j<-t\le x_i} dt \nonumber\\
&= \sum_{i< j}w_{ij} \int_{-\norm{\vec x}}^{\norm{\vec x}}   \chi_{x_i<t<x_j} + \chi_{x_j<t<x_i}  dt\\
&= \sum_{i< j} w_{ij}\vert x_i-x_j\vert=I(\vec x),
\end{align*}
where the integral equalities hold when '$<$' is replaced by '$\le$'.

Hereafter the endpoints of intervals in the integral form \eqref{eq:Lovasz2int} are dropped
for convenience. Thus, it gives the form of set-pair Lov\'asz extension of $F_1$ in Table \ref{tab:L}.

Applying Proposition \ref{def:Lovasz2} to $F_2$, we get
\begin{align*}
F_2^L(\vec x)&=\int_0^{\norm{\vec x}} F_2(V_t^+(\vec x),V_t^-(\vec x))dt\\
&=\int_0^{\norm{\vec x}} \abs{E(V_t^+(\vec x),V_t^-(\vec x))}dt\\
&=\int_0^{\norm{\vec x}} \sum_{i=1}^n\sum_{j=1}^n w_{ij}\chi_{x_i>t}\chi_{x_j<-t}dt\\
&= \sum_{i=1}^n\sum_{j=1}^n w_{ij}\int_0^{\norm{\vec x}}\chi_{x_i>t}\chi_{x_j<-t}dt\\
&= \frac12\sum_{i=1}^n\sum_{j=1}^n w_{ij}\min\{x_i+|x_i|,|x_j|-x_j\}\\
&= \frac14\sum_{i=1}^n\sum_{j=1}^n w_{ij}(x_i+|x_i|+|x_j|-x_j-|x_i+x_j+|x_i|-|x_j||),
\end{align*}
where we used the fact that $\min\{a,b\}= \frac12(a+b-|a-b|)$ in the last equality. Further, we can obtain
\begin{align}
\notag F_2^L(\vec x)&= \frac14\sum_{i<j} w_{ij}(2(|x_i|+|x_j|)-\big|x_i+x_j+|x_i|-|x_j|\big|-\big|x_i+x_j-|x_i|+|x_j|\big|)\\
\label{eq:F2L2}
&= \frac14\sum_{i<j} w_{ij}(2|x_i|+2|x_j|-2\max\{|x_i+x_j|,\big||x_i|-|x_j|\big|\})\\
&= \frac{1}{2}\|\vec x\|-\frac{1}{2}\sum_{i< j}\abs{x_i+x_j}=\frac{1}{2}\|\vec x\|-\frac{1}{2}I^+(\vec x),\notag
\end{align}
where Eq.~\eqref{eq:F2L2} utilizes the fact that $2\max\{|a|,|b|\}=|a+b|+|a-b|$.

Correspondingly, the set-pair extensions of $G_1$ and $G_2$ in Table \ref{tab:L} can be verified in the following way:
\begin{align}
G_1^L(\vec x) &=  \int_0^{\norm{\vec x}}G_1(V_t^+(\vec x),V_t^-(\vec x))dt  = \vol(V)\norm{\vec x}, \notag\\
G_2^L(\vec x)&=\int_0^{\norm{\vec x}} G_2(V_t^+(\vec x),V_t^-(\vec x))dt \nonumber \\
&=\int_0^{\norm{\vec x}} \vol(V_t^+(\vec x))+\vol(V_t^-(\vec x))dt\nonumber \\
&=\int_0^{\norm{\vec x}} \sum_{i=1}^n d_i \chi_{\abs{x_i}>t}dt=\|\vec x\|.\notag
\end{align}
Now, let us focus on $G_3$. Direct calculation shows
\begin{align*}
G_3^L(\vec x) &= \int_0^{\norm{\vec x}}G_3(V_t^+(\vec x),V_t^-(\vec x))dt\\
&=\int_0^{\norm{\vec x}}\min\{\vol(V_t^+(\vec x)),\vol(V_t^+(\vec x)^c)\}+\min\{\vol(V_t^-(\vec x)),\vol(V_t^-(\vec x)^c)\}dt\\
&=\int_0^{\norm{\vec x}}\min \{\vol(V_t^+(\vec x)),\vol(V_t^+(\vec x)^c)\}+\min\{\vol(V_{-t}^+(\vec x)^c),\vol(V_{-t}^+(\vec x))\}dt\\
&=  \int_{-\norm{\vec x}}^{\norm{\vec x}}\min\{\vol(V_t^+(\vec x)),\vol(V_t^+(\vec x)^c)\} dt.
\end{align*}
Let $\sigma$ be a permutation of $\set{1,2,\ldots,n}$ such that
$x_{\sigma(1)}\leq x_{\sigma(2)}\leq\cdots\leq x_{\sigma(n)}$. Then there exists $k_0\in\set{1,2,\ldots,n}$ satisfying
\begin{equation}
\sum_{i=1}^{k_0-1} d_{\sigma(i)}<\frac{1}{2}\vol(V)\leq \sum_{i=1}^{k_0} d_{\sigma(i)}.
\end{equation}
Consequently, it reveals that
\begin{equation}
\min\{\vol(V_t^+(\vec x)),\vol(V_t^+(\vec x)^c)\} =
\begin{cases}
\vol(V_t^+(\vec x)), &\text{if $t<x_{\sigma(k_0)}$,}\\
\vol(V_t^+(\vec x)^c), &\text{if $t\ge x_{\sigma(k_0)}$,}
\end{cases}
\end{equation}
and
\begin{align*}
G_3^L(\vec x)&= \int_{x_{\sigma(1)}}^{x_{\sigma(k_0)}} \vol(V_t^+(x)^c) dt+\int_{x_{\sigma(k_0)}}^{x_{\sigma(n)}} \vol(V_t^+(x))dt\\
&= \sum_{i=1}^{k_0-1}(x_{\sigma(i+1)}-x_{\sigma(i)})\sum_{j=1}^{i} d_{\sigma(j)}+\sum_{i=k_0}^{n-1}(x_{\sigma(i+1)}-x_{\sigma(i)})\sum_{j=i+1}^{n} d_{\sigma(j)}\\
&=\sum_{j=1}^{k_0-1} d_{\sigma(j)}\sum_{i=j}^{k_0-1}(x_{\sigma(i+1)}-x_{\sigma(i)})+\sum_{j=k_0+1}^{n} d_{\sigma(j)}\sum_{i=k_0}^{j-1}(x_{\sigma(i+1)}-x_{\sigma(i)})\\
&=\sum_{i=1}^n d_{\sigma(i)}\vert x_{\sigma(i)}-x_{\sigma(k_0)}\vert = \Vert \vec x -x_{\sigma(k_0)}\vec 1\Vert.
\end{align*}
On the other hand, $\Vert \vec x -\alpha\vec 1\Vert$ is convex in $\alpha$ and satisfies 
\begin{equation*}
p_\alpha:=-\sum_{i=1}^n d_{\sigma(i)}\sign(x_{\sigma(i)}-\alpha)\in \partial_\alpha \Vert \vec x -\alpha\vec 1\Vert
\end{equation*}
and then
\begin{equation*}
\begin{cases}
p_\alpha\leq \sum_{j=1}^{k_0-1} d_{\sigma(j)}-\sum_{j=k_0}^{n} d_{\sigma(j)}\leq 0,&\text{ if $\alpha<x_{\sigma(k_0)}$,}\\
p_\alpha\ge \sum_{j=1}^{k_0} d_{\sigma(j)}-\sum_{j=k_0+1}^{n} d_{\sigma(j)}\ge 0,&\text{ if $\alpha>x_{\sigma(k_0)}$.}
\end{cases}
\end{equation*}
This implies that $ \Vert \vec x -\alpha\vec 1\Vert$ is decreasing with respect to $\alpha$ in $(-\infty,x_{\sigma(k_0)})$ and increasing in $(x_{\sigma(k_0)},+\infty)$. Thus, we obtain that
\begin{equation*}
x_{\sigma(k_0)}\in\argmin_{\alpha\in\mathbb{R}} \Vert \vec x -\alpha\vec 1\Vert.
\end{equation*}
Therefore,
\begin{equation}\label{G3L}
G_3^L(\vec x)=\Vert \vec x -x_{\sigma(k_0)}\vec 1\Vert=\min\limits_{\alpha\in \mathbb{R}}\|\vec x-\alpha \vec 1\|.
\end{equation}


\subsection{Graph $3$-cut problems}
It is straightforward to utilize the proposed set-pair Lov\'asz extension 
to deal with the combination optimizations in a set-pair form,
for example, the  dual Cheeger and max 3-cut problems.
Now, we are in a position to answer Question \ref{question:dualcc}.

\begin{proof}[Proof of Theorem \ref{th:dualCheeger}]
Applying Theorem \ref{th:set-pair-Lovasz} in the dual Cheeger cut problem \eqref{eq:dual-Cheeger-constant} yields
\begin{equation}
h^+(G)=\max\limits_{\vec x\ne \vec 0}\frac{2F_2^L(\vec x )}{G_2^L(\vec x)}=1-\min\limits_{\vec x\ne \vec 0}\frac{I^+(\vec x)}{\| \vec  x\|},
\end{equation}
where we have used $F_2^L$ and $G_2^L$ in Table \ref{tab:L}.
\end{proof}


\begin{proof}[Proof of Theorem \ref{th:max3cut}]
Since $A\cap B=B\cap C= C\cap A=\varnothing$ and $A\cup B\cup C=V$, we can suppose that $A\cup B\ne \varnothing$. Then, by Theorem \ref{th:set-pair-Lovasz} and Proposition \ref{pro:pro}, we have
\begin{align*}
\frac{1}{2}h_{\max,3}(G) &= \max_{A,B,C}\frac{|E(A,B)|+|E(B,C)|+|E(C,A)|}{\vol(V)}\\
&=\max_{A,B,C}\frac{|E(A,B)|+|E(A\cup B,C)|}{\vol(V)}\\
&=\max\limits_{(A,B)\in \power_2(V)\setminus \{(\varnothing,\varnothing)\}}\frac{|E(A,B)|+|\partial (A\cup B)|}{\vol(V)}\\
&=\max\limits_{(A,B)\in \power_2(V)\setminus \{(\varnothing,\varnothing)\}}\frac{\abs{\partial A}+\abs{\partial B}-\abs{E(A,B)}}{\vol(V)} \\
&=\max\limits_{(A,B)\in \power_2(V)\setminus \{(\varnothing,\varnothing)\}}\frac{F_1(A,B)-F_2(A,B)}{G_1(A,B)}\\
&=\max\limits_{\vec   x\ne\vec  0}\frac{F_1^L( \vec  x)-F_2^L( \vec  x)}{G_1^L( \vec  x)},
\end{align*}
where $F_1^L$ and $G_1^L$ are given in Table \ref{tab:L}. Thus
\begin{equation}\label{eq:tmp1}
h_{\max,3}(G)=\max_{\vec x\ne \vec0}\frac{2I(\vec x)-\|\vec x\|+I^+(\vec x)}{\vol(V)\norm{\vec x}}.
\end{equation}
It follows from $|a-b|+|a+b|=2\max\{|a|,|b|\}=||a|-|b||+|a|+|b|$ for any $a,b\in \mathbb{R}$ that
\begin{align*}
I(\vec x)&=\sum_{i<j}w_{ij}|x_i-x_j|
\\&=\sum_{i< j} w_{ij}(|x_i|+|x_j| +||x_i|-|x_j||-|x_i+x_j|)
\\&=\sum_{i=1}^n d_i|x_i| + \sum_{i< j}w_{ij} ||x_i|-|x_j||- \sum_{i< j} w_{ij}|x_i+x_j|
\\&= \|\vec x\|+\hat{I}(\vec x)-I^+(\vec x),
\end{align*}
and thus
$$
2I(\vec x)-\|\vec x\|+I^+(\vec x)=2\hat{I}(\vec x)+\|\vec x\|-I^+(\vec x)=I(\vec x)+\hat{I}(\vec x).
$$
Finally, Eq.~\eqref{eq:tmp1} turns out to be
\begin{equation*}
h_{\max,3}(G)=\max_{\vec x\ne \vec0}\frac{I(\vec x)+\hat{I}(\vec x)}{\vol(V)\norm{\vec x}}.
\end{equation*}
\end{proof}

\begin{proof}[Proof of Theorem \ref{th:max3cut-ratio1}]

According to Theorem \ref{th:set-pair-Lovasz}, we have
\begin{align*}
h_{\max,3,I}(G)=&\max\limits_{(A,B)\in \power_2(V)}\frac{F_1(A,B)-F_2(A,B)}{G_2(A,B)}\\
=&\max\limits_{\vec x\ne\vec0}\frac{\|\vec x\|-I^+(\vec x)+2\hat{I}(\vec x)}{\|\vec x\|}.
\end{align*}
\end{proof}

\begin{proof}[Proof of Theorem \ref{th:max3cut-II}]
Let
\begin{equation}
G(A,B)=\min\{\vol(A\cup B),\vol((A\cup B)^c)\},
\end{equation}
and $\sigma$ be a permutation of $\set{1,2,\ldots,n}$ such that
$\abs{x_{\sigma(1)}}\leq \abs{x_{\sigma(2)}}\leq\cdots\leq \abs{x_{\sigma(n)}}$.
By Definition \ref{def:Lovasz-pair},
the set-pair Lov\'asz extension of $G(A,B)$ is
\begin{align*}
G^{L}(\vec x)&=\sum_{i=0}^{n-1}(|x_{\sigma(i+1)}|-|x_{\sigma(i)}|)G(V_{\sigma(i)}^+,V_{\sigma(i)}^-)\\
&=\sum_{i=0}^{n-1}(|x_{\sigma(i+1)}|-|x_{\sigma(i)}|)\min\{\vol(V_{\sigma(i)}^+\cup V_{\sigma(i)}^-),\vol((V_{\sigma(i)}^+\cup V_{\sigma(i)}^-)^c)\}\\
&=\sum_{i=0}^{n-1}(|x_{\sigma(i+1)}|-|x_{\sigma(i)}|)G_3(V_{\sigma(i)}^+\cup V_{\sigma(i)}^-,\varnothing)\\
&=G_3^L(\abs{\vec x})=\min\limits_{\alpha\in \mathbb{R}}\|\abs{\vec x}-\alpha \vec 1\|,
\end{align*}
where $\abs{\vec x} = (|x_1|,\ldots,|x_n|)$ and Eq.~\eqref{G3L} is applied in the last line.
Finally, applying Theorem \ref{th:set-pair-Lovasz} into Eq.~\eqref{eq:max3II} leads to
\begin{align*}
h_{\max,3,II}(G)&=\max\limits_{\vec x\ne\vec0}\frac{2F_1^L(\vec x)-2F_2^L(\vec x)}{G_1^L(\vec x)-G^L(\vec x)}\\
&=\max\limits_{\vec x\ne\vec0}\frac{2I(\vec x)-\|\vec x\|+I^+(\vec x)}{\vol(V)\|\vec x\|_\infty-\min\limits_{\alpha\in \mathbb{R}}\sum_{i=1}^nd_i\left||x_i|-\alpha\right|}.
\end{align*}
\end{proof}


\subsection{Graph $k$-cut ($k>3$) problems}\label{sec:kcut}
In this section, we present a preliminary attempt to a graph $k$-cut problem. The main idea is to transfer a graph $k$-cut problem to a $3$-cut one on a larger graph. To this end, let us start from
\begin{align}
\power_k([n]) &= \{(A_1,\ldots,A_k)\big|A_i\cap A_j = \varnothing, A_i\subset [n]\}, \\
\mathcal{H}_{k+1}([n]) &= \{(A_1,\ldots,A_{k+1})\big|A_i\cap A_j = \varnothing, \bigcup_{i=1}^{k+1} A_i= [n]\},
\end{align}
where $[n]=\{1,2,\ldots,n\}$. Obviously $\power_k([n])$ is equivalent to $\mathcal{H}_{k+1}([n])$, 
$\power_k([n])\simeq \mathcal{H}_{k+1}([n])$.
Then a bijection between $\power_2([ln])$ and $\mathcal{H}_{3^l}([n])$ can be obtained via
\begin{equation}
\power_2([ln])\simeq \mathcal{H}_{3}([ln])\simeq \prod_{i=1}^l \mathcal{H}_{3}([n])\simeq \mathcal{H}_{3^l}([n]).
\end{equation}

For a family $\power$ consisting of set-tuples, we use $C(\power):=\{f:\power\to \mathbb{R}\}$ to denote the collection of real valued functions on $\power$, and then have the following commutative diagrams for any $k<3^l$:
\begin{figure}[H]
\begin{tikzpicture}
  \matrix (m) [matrix of math nodes, row sep=3em, column sep=3em]
    { \power_k([n])& \power_2([ln]) &C(\power_k([n]))&C(\power_2([ln]))\\
      \mathcal{H}_{3^l}([n]) & \prod\limits_{i=1}^l\mathcal{H}_{3}([n])&C(\mathcal{H}_{3^l}([n]) )& C(\prod\limits_{i=1}^l\mathcal{H}_{3}([n])) \\ };
  { [start chain] \chainin (m-1-2);
    \chainin (m-1-1)[join={node[above,labeled] {h}}];}
    { [start chain] \chainin (m-1-2);
    \chainin (m-2-2) [join={node[left,labeled] {h_3}}];
    \chainin (m-2-1) [join={node[above,labeled] {h_2}}];
    \chainin (m-1-1) [join={node[left,labeled] {h_1}}];
    }
      { [start chain] \chainin (m-1-3);
    \chainin (m-1-4)[join={node[above,labeled] {h^*}}];}
    { [start chain] \chainin (m-1-3);
    \chainin (m-2-3) [join={node[left,labeled] {h_1^*}}];
    \chainin (m-2-4) [join={node[above,labeled] {h_2^*}}];
    \chainin (m-1-4) [join={node[left,labeled] {h_3^*}}];
    }
\end{tikzpicture}
\caption{\small The  commutative diagram on the right is indeed the dual diagram of the left one in some sense.
}\label{commutative}
\end{figure}

 In Fig.~\ref{commutative}, $h_1$ is the natural injective mapping from $\mathcal{H}_{3^l}([n])$ to $\power_{k}([n])$ by choosing only the last $k$ parts from each element in $H_{3^l}([n])$.
Therefore, given $f\in C(\power_{k}([n]))$, there exists $F\in C(\power_2([ln]))$ and an injective mapping $h$ from $\power_2([ln])$ to $\power_{k}([n])$  such that $ F = f\circ h$.
 For convenience, the set-pair Lov\'asz extension of $F$ is again called the Lov\'asz extension of $f$.
 That is, there exist $F_1\in C(H_{3^l}([n]))$ and $F_2\in C(\prod_{i=1}^lH_{3}([n]))$ such that
\begin{equation}
F_1=f\circ h_1,\,\,
F_2(\prod_{i=1}^l(T^i_0,T^i_1,T^i_2)) = F_1((A_0,A_1,\ldots,A_{3^l-1})),
\end{equation}
where $(T^i_0,T^i_1,T^i_2)\in H_3([n])$, $i=1,\ldots, l$, and $A_j = \bigcap_{i=1}^l T^i_{a_i}$ with $(a_l\ldots a_1)_3$ being the ternary representation of $j$ for $j=0,1,\ldots,3^l-1$. In other words, there exists an injection $h_2$ from $\prod_{i=1}^lH_{3}([n])$ to $H_{3^l}([n])$ such that
\begin{equation}
F_2 = F_1\circ h_2.
\end{equation}

Finally, we can take an injection $h_3$ from $\power_2([ln])$ to $ H_{3^l}([n])$ by
$$
h(T_1,T_2) = \prod_{i=1}^l(T^i_0,T^i_1,T^i_2),
$$
where
\begin{equation}\label{eq:Tia}
T^i_a = \{t\in [n]|\,t+n(i-1)\in T_a\}, \quad T_0 = (T_1\cap T_2)^c, \quad a=0,1,2.
\end{equation}

Thus, the correspondence is well established between the function $f$ defined on $P_{k}([n])$ and the function $F$ on $P_2([ln])$ by letting $h=h_1\circ h_2\circ h_3$ and
$$
F = F_2\circ h_3 = F_1\circ h_2\circ h_3=f\circ h.
$$

Applying Theorem \ref{th:set-pair-Lovasz},
we are able to give an equivalent continuous optimization for the max $k$-cut problem.
\begin{definition}[max $k$-cut \cite{FriezeJerrum1997}]
Given a connected graph $G=(V,E)$ with $V=[n]$, the max $k$-cut problem is  to determine a graph $k$-cut by solving
\begin{equation}\label{eq:maxk}
h_k(G)=\min_{(A_1,A_2,\ldots,A_k)\in \mathcal{H}_{k}([n])}\frac{\sum_{i=1}^k|\partial A_i|}{ \sum_{i=1}^k\vol(A_i)}.
\end{equation}
\end{definition}

We can find $F,G\in C(\power_2([ln]))$ such that
\begin{equation}
F(T_1,T_2) = \sum_{j=3^l-k}^{3^l-1} |\partial A_j|,\text{ and }\,G(T_1,T_2) = \sum_{j=3^l-k}^{3^l-1}\vol(A_i),
\end{equation}
where $A_j = \bigcap_{i=1}^l T^i_{a_i}$, $T^i_{a_i}$ is given in Eq.~\eqref{eq:Tia},
and $(a_l\ldots a_1)_3$ denotes the ternary representation of $j$ for $j\in\{0,1,\ldots,3^l-1\}$.

Let us write down the functions $F^L$ and $G^L$ explicitly. In fact,
\begin{equation}\label{F0Lk}
F^L(\prod_{i=1}^l\vec x^{(i)}) = \int_0^{\|\vec x\|_\infty} F(V_t^+,V_t^-)dt
\end{equation}
is a continuous function defined on $\mathbb{R}^{nl}$.
Here $V_t^\pm = \{(i-1)n+j|\pm x^{(i)}_j>t \}$ for any $\vec x=\prod_{i=1}^l\vec x^{(i)}\in \mathbb{R}^{nl}$, and $\vec x^{(i)} = (x_1^{(i)},\ldots,x_n^{(i)})$.

\begin{prop}\label{eq:FkL}
$$
F^L(\prod_{i=1}^l\vec x^{(i)}) = \sum_{j=1}^n d_jz_j-2\sum_{i< j}^n w_{ij} \sum_{(a_l\ldots a_1)_2=3^l-k}^{3^l-1}z_{ij}^{(a_l\ldots a_1)_3},
$$
where
\begin{align*}
z_j &= \min\left\{t\ge0\Big|\,(a_l\ldots a_1)_3<3^l-k,\,\, a_i=\vec 1_{x_j^{(i)}>t}+2\vec 1_{-x_j^{(i)}>t}\right\}, \\
z_{ij}^{(a_l\ldots a_1)_3}&=\min\left\{-(-1)^{a_\alpha}x_{i'}^{(\alpha)}-| x_{j'}^{(\beta)}|\Big| a_\alpha >0, a_\beta = 0, i',j'\in\{i,j\}\right\}_+, \\
z_+ &= \max\{z,0\}.
\end{align*}
\end{prop}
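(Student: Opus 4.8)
The plan is to compute $F^L$ directly from the integral form \eqref{eq:Lovasz2int} of the set-pair Lov\'asz extension, specialized to the function $F$ built in Subsection \ref{sec:kcut}. First I would recall that, for $\vec x = \prod_{i=1}^l \vec x^{(i)} \in \mathbb{R}^{nl}$, the level sets are $V_t^\pm = \{(i-1)n+j : \pm x_j^{(i)} > t\}$, so that the pair $(V_t^+, V_t^-)$ corresponds under the bijection $h_3$ to a tuple of ternary ``colors'': vertex $j$ of the original graph at layer $i$ receives color $a_i(t) = \vec 1_{x_j^{(i)}>t} + 2\cdot\vec 1_{-x_j^{(i)}>t} \in \{0,1,2\}$, and the composite index is $(a_l(t)\ldots a_1(t))_3$. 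Thus $F(V_t^+,V_t^-) = \sum_{j=3^l-k}^{3^l-1} |\partial A_j(t)|$ where $A_j(t)$ is the set of original vertices whose composite ternary color at ``time'' $t$ equals $j$. The key reduction is then to evaluate $\int_0^{\|\vec x\|_\infty} F(V_t^+,V_t^-)\,dt$ by expanding $|\partial A_j(t)| = \tfrac12\sum_{i'} \sum_{j'} w_{i'j'}(\chi_{i'\in A_j(t)} + \chi_{j'\in A_j(t)} - 2\chi_{i'\in A_j(t)}\chi_{j'\in A_j(t)})$, equivalently writing $\sum_j |\partial A_j(t)|$ over the relevant color classes as a sum over edges of an indicator that the two endpoints carry distinct colors (restricted to colors in the range $[3^l-k, 3^l-1]$).

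Next I would swap the sum over edges with the integral. For the ``degree'' part, the contribution of a single vertex $j$ (summed over all layers it appears in, with appropriate bookkeeping) is the total length of time $t\in[0,\|\vec x\|_\infty]$ during which $j$'s composite color lies in the target range $\{3^l-k,\ldots,3^l-1\}$; since the composite color, as a function of $t$, is monotone in a suitable sense as the thresholds $|x_j^{(i)}|$ are crossed, this length is exactly $\|\vec x\|_\infty$ minus the time spent with composite color $<3^l-k$, and that latter time is precisely $z_j = \min\{t\ge 0 : (a_l(t)\ldots a_1(t))_3 < 3^l-k\}$ (using that once the color enters the target block it stays there — or more carefully, integrating the indicator and identifying the measure of the sublevel set with $z_j$). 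This yields the term $\sum_j d_j z_j$. For the ``edge pair'' part, for a fixed edge $i\sim j$ and a fixed target color $c=(a_l\ldots a_1)_3$, the measure of $\{t : \text{both endpoints have composite color } c\}$ equals the length of the interval of $t$ for which, simultaneously for every layer $\alpha$, both $x_i^{(\alpha)}$ and $x_j^{(\alpha)}$ are on the correct side of $t$ with the sign dictated by $a_\alpha$; this is a nonnegative number of the form $\bigl(\min_{\alpha,i',j'} (-(-1)^{a_\alpha}x_{i'}^{(\alpha)} - |x_{j'}^{(\beta)}|)\bigr)_+$ over the indicated index ranges, i.e.\ exactly $z_{ij}^{(a_l\ldots a_1)_3}$. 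Collecting, $F^L = \sum_j d_j z_j - 2\sum_{i<j} w_{ij}\sum_{c=3^l-k}^{3^l-1} z_{ij}^{c}$.

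The main obstacle I expect is the careful combinatorial bookkeeping in matching the measures of the $t$-sets to the closed-form expressions $z_j$ and $z_{ij}^c$ — in particular verifying that the ``time with composite color $<3^l-k$'' is a single initial interval $[0,z_j)$ (so that one genuinely gets $z_j$ and not a more complicated sum of interval lengths), and correctly handling the constraints $a_\alpha > 0$ versus $a_\beta = 0$ in $z_{ij}^c$, where the positive coordinates contribute a constraint of the form $-(-1)^{a_\alpha}x_{i'}^{(\alpha)} > t$ and the zero coordinates contribute $|x_{j'}^{(\beta)}| \le t$, whose intersection over all layers gives an interval whose length is the stated min-of-differences truncated at zero. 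One also has to be slightly attentive to endpoint conventions (strict versus non-strict inequalities), but these are sets of measure zero and do not affect the integral. Everything else is routine: linearity of the integral, Proposition \ref{def:Lovasz2} to pass from the definition to the integral form, and the standard edge-boundary identity $|\partial A| = \sum_{i<j} w_{ij}|\chi_A(i)-\chi_A(j)|$.
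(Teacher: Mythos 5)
Your route is the same as the paper's: apply the integral form (Proposition \ref{def:Lovasz2}) to $F$, split $\sum_{c=3^l-k}^{3^l-1}|\partial A_c(t)|$ into a volume part and an interior-edge part (in effect $|\partial A|=\vol(A)-2|E(A)|$), exchange the edge sum with the $t$-integral, and identify the lengths of the resulting $t$-intervals with $z_j$ and $z_{ij}^c$. Your treatment of the edge-pair term is correct and coincides with the paper's computation: for a fixed edge and fixed target color $c$, the admissible $t$ form an interval whose length is $\bigl(\min_{\alpha,\beta,i',j'}(-(-1)^{a_\alpha}x_{i'}^{(\alpha)}-|x_{j'}^{(\beta)}|)\bigr)_+=z_{ij}^c$.

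However, your justification of the degree term is internally inconsistent and uses the wrong monotonicity. As $t$ increases, each digit $a_i(t)=\vec 1_{x_j^{(i)}>t}+2\,\vec 1_{-x_j^{(i)}>t}$ can only drop to $0$ (and stay $0$ once $t\ge|x_j^{(i)}|$), so the composite ternary number $(a_l(t)\ldots a_1(t))_3$ is non-increasing in $t$: the color \emph{leaves} the top block $\{3^l-k,\ldots,3^l-1\}$ at time $z_j$ and never returns, which is the opposite of your claim that ``once the color enters the target block it stays there''. Hence the set of times at which vertex $j$ belongs to $\bigcup_{c\ge 3^l-k}A_c(t)$ is the initial interval $[0,z_j)$, of measure exactly $z_j$, while the time spent with composite color $<3^l-k$ inside $[0,\|\vec x\|_\infty]$ is $\|\vec x\|_\infty-z_j$, not $z_j$. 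Taken literally, your sentence (``this length is $\|\vec x\|_\infty$ minus the time spent with color $<3^l-k$, and that latter time is $z_j$'') yields the degree coefficient $\|\vec x\|_\infty-z_j$, contradicting the term $\sum_j d_j z_j$ that you then assert and that the proposition states. The repair is the one-line observation that the color is non-increasing and the target block is an up-set of $\{0,\ldots,3^l-1\}$, so the target-time set is a single initial interval of length $z_j$; with that fixed, the rest of your argument closes exactly as in the paper's proof.
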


\begin{proof}
A direct calculation leads to
\begin{align*}
F^L(\prod_{i=1}^l\vec x^{(i)}) =& \int_0^{\|\vec x\|}F(V_t)dt=\int_0^{\|\vec x\|}\sum_{(a_l\ldots a_1)_3=3^l-k}^{3^l-1}|\partial A_{a_l\ldots a_1}(t)|dt\\
=&\int_0^{\|\vec x\|}\sum_{(a_l\ldots a_1)_3=3^l-k}^{2^l-1}|\vol( A_{a_l\ldots a_1}(t))|-2|E(A_{a_l\ldots a_1}(t))|dt\\
=&\mathrm{I}-\mathrm{II},
\end{align*}
where $E(A)$ is the set of all edges with endpoints in $A$ and
\begin{align*}
\mathrm{I} =& \int_0^{\|\vec x\|}\sum_{(a_l\ldots a_1)_3=3^l-k}^{3^l-1}|\vol( A_{a_l\ldots a_1}(t))|dt,\\
\mathrm{II}  =&2\int_0^{\|\vec x\|}\sum_{(a_l\ldots a_1)_3=3^l-k}^{3^l-1}|E(A_{a_l\ldots a_1}(t))|dt.
\end{align*}
It is easy to check the first part
\begin{align*}
\mathrm{I} =&\int_0^{\|\vec x\|}\sum_{(a_l\ldots a_1)_3=3^l-k}^{3^l-1}\sum_{i< j}w_{ij}[\vec 1_{i\in A_{a_l\ldots a_1}(t)}+\vec 1_{j\in A_{a_l\ldots a_1}(t)}]dt\\
=&\sum_{i< j}w_{ij}\int_0^{\|\vec x\|}\sum_{(a_l\ldots a_1)_3=3^l-k}^{3^l-1}\vec 1_{i\in A_{a_l\ldots a_1}(t)}+\vec 1_{j\in A_{a_l\ldots a_1}(t)}dt\\
=&\sum_{i< j}w_{ij}\int_0^{\|\vec x\|}\vec 1_{i\in \bigcup_{(a_l\ldots a_1)_3=3^l-k}^{3^l-1}A_{a_l\ldots a_1}(t)}+\vec 1_{j\in \bigcup_{(a_l\ldots a_1)_3=3^l-k}^{3^l-1}A_{a_l\ldots a_1}(t)}dt\\
=&\sum_{i< j}w_{ij}\left(z_i+z_j\right)=\sum_{j=1}^nd_jz_j,
\end{align*}
and the second part
\begin{align*}
\mathrm{II}=&2\int_0^{\|\vec x\|}\sum_{(a_l\ldots a_1)_3=3^l-k}^{3^l-1}\sum_{i< j}w_{ij}\vec 1_{i,j\in A_{a_l\ldots a_1}(t)}dt\\
=&2\sum_{i< j}w_{ij}\sum_{(a_l\ldots a_1)_3=3^l-k}^{3^l-1}\int_0^{\|\vec x\|}\vec 1_{i,j\in A_{a_l\ldots a_1}(t)}dt\\
=&2\sum_{i< j}w_{ij}\sum_{(a_l\ldots a_1)_3=3^l-k}^{3^l-1}\int_0^{\|\vec x\|} \prod_{\substack{\alpha:a_\alpha>0,\\i'\in\{i,j\}}}\vec 1_{t\le-(-1)^{a_\alpha} x_{i'}^{(\alpha)}}\prod_{\substack{\beta:,a_\beta=0,\\j'\in\{i,j\}}}\vec 1_{|x_{j'}^{(\beta)}|<t}dt\\
=&2\sum_{i< j}w_{ij}\sum_{(a_l\ldots a_1)_3=3^l-k}^{3^l-1}z_{ij}^{(a_l\ldots a_1)_3}.\\
\end{align*}
Thus, we complete the proof.
\end{proof}
\begin{remark}
If $k=3^l-1$, then
$$
F^L(\prod_{i=1}^l\vec x^{(i)}) = \sum_{j=1}^n d_i\max_s|x_j^{(s)}|-2\sum_{i< j}^n w_{ij} \sum_{(a_l\ldots a_1)_3=1}^{3^l-1}z_{ij}^{(a_l\ldots a_1)_3}.
$$
\end{remark}

It can be readily verified that:
\begin{prop}\label{eq:GkL}
$$
G^L(\prod_{i=1}^l\vec x^{(i)})=\sum_{j=1}^nd_jz_j = \mathrm{I}.
$$
\end{prop}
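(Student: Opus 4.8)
The plan is to repeat, essentially verbatim, the integral computation that produced the term $\mathrm{I}$ in the proof of Proposition~\ref{eq:FkL}. The point is that $G$, unlike the boundary function $F$, carries no $-2|E|$ correction: it is simply a sum of volumes, so after passing to the set-pair Lov\'asz extension nothing remains to be subtracted and $G^L$ collapses to $\mathrm{I}$ exactly. This is why the proposition asserts not only $G^L=\sum_j d_jz_j$ but literally $G^L=\mathrm{I}$.

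Concretely, I would first apply the integral form of the set-pair Lov\'asz extension (Proposition~\ref{def:Lovasz2}) to write
$$G^L\Bigl(\prod_{i=1}^l\vec x^{(i)}\Bigr)=\int_0^{\|\vec x\|_\infty}G(V_t^+,V_t^-)\,dt=\int_0^{\|\vec x\|_\infty}\sum_{(a_l\ldots a_1)_3=3^l-k}^{3^l-1}\vol\bigl(A_{a_l\ldots a_1}(t)\bigr)\,dt,$$
using $G(T_1,T_2)=\sum_{j=3^l-k}^{3^l-1}\vol(A_j)$ together with the description $A_{a_l\ldots a_1}(t)=\bigcap_{i=1}^l T^i_{a_i}(t)$, where $T^i_{a}(t)$ is read off from the sign and magnitude of $x^{(i)}$ at level $t$. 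Then I would expand $\vol(A_{a_l\ldots a_1}(t))=\sum_{j=1}^n d_j\,\vec 1_{j\in A_{a_l\ldots a_1}(t)}$ (equivalently split $d_j=\sum_i w_{ij}$, as done for $\mathrm{I}$), interchange the finite sums with the integral, and use pairwise disjointness of the blocks $A_{a_l\ldots a_1}(t)$ for $3^l-k\le(a_l\ldots a_1)_3\le 3^l-1$ to collapse $\sum_{(a_l\ldots a_1)_3}\vec 1_{j\in A_{a_l\ldots a_1}(t)}=\vec 1_{\,j\in\bigcup_{(a_l\ldots a_1)_3}A_{a_l\ldots a_1}(t)}$. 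This leaves only the single-vertex identity $\int_0^{\|\vec x\|_\infty}\vec 1_{\,j\in\bigcup_{(a_l\ldots a_1)_3=3^l-k}^{3^l-1}A_{a_l\ldots a_1}(t)}\,dt=z_j$ to check, after which $G^L=\sum_j d_j z_j=\mathrm{I}$ follows immediately.

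The crux is therefore to show that $\{t\ge 0:\ j\in\bigcup_{(a_l\ldots a_1)_3=3^l-k}^{3^l-1}A_{a_l\ldots a_1}(t)\}$ equals the interval $[0,z_j)$. At each level $t$ the vertex $j$ lies in exactly one block, the one whose ternary digits are $a_i(t)=\vec 1_{x_j^{(i)}>t}+2\cdot\vec 1_{-x_j^{(i)}>t}$; each digit $a_i(t)$ is a non-increasing step function of $t$ (it equals the sign code of $x_j^{(i)}$ for $t<|x_j^{(i)}|$ and vanishes for $t\ge|x_j^{(i)}|$), so $(a_l(t)\ldots a_1(t))_3=\sum_i a_i(t)3^{i-1}$ is non-increasing in $t$, and hence the condition $(a_l(t)\ldots a_1(t))_3\ge 3^l-k$ carves out an initial interval whose right endpoint is precisely $z_j=\min\{t\ge 0:(a_l\ldots a_1)_3<3^l-k\}$. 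Integrating the indicator of $[0,z_j)$ gives $z_j$; summing over $j$ with weights $d_j$ finishes the argument. No new estimate is needed beyond the reorganization of $\mathrm{I}$ already carried out, so it is this combinatorial bookkeeping of the ternary code, rather than any analytic difficulty, that carries the content of the proof.
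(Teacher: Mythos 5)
Your argument is correct and is essentially the paper's own: the paper leaves this proposition as "readily verified" because it is exactly the computation of the term $\mathrm{I}$ in the proof of Proposition~\ref{eq:FkL}, which you reproduce — integral form of the set-pair extension, expansion of $\vol(A_{a_l\ldots a_1}(t))$ into vertex indicators, disjointness of the blocks, and the monotonicity of the ternary code giving $\int_0^{\|\vec x\|_\infty}\vec 1_{j\in\bigcup A(t)}\,dt=z_j$. Nothing further is needed.
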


Accordingly, we get
\begin{prop}
\begin{equation}
h_{k}= \max_{\vec x\in \mathbb{R}^{nl}\setminus K}\frac{F^L(\vec x)}{G^L(\vec x)},
\end{equation}
where $F^L$ and $G^L$ are defined in Propositions \ref{eq:FkL} and \ref{eq:GkL}, respectively, and
$K = \{\vec x| \,z_j=0,\forall j=1,2,\ldots,n\}$.

\end{prop}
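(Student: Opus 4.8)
The plan is to obtain the asserted identity from Theorem~\ref{th:set-pair-Lovasz}, applied to the set-pair functions $F,G\in C(\power_2([ln]))$ constructed in this subsection, the only new ingredient being the excision of the locus where the denominator degenerates. Because $F=f\circ h$ and $G=g\circ h$ with $f,g$ the numerator and denominator of~\eqref{eq:maxk} and $h=h_1\circ h_2\circ h_3$ exhausting the admissible $k$-tuples, the combinatorial optimum of $F/G$ over $\power_2([ln])$ is the value $h_k(G)$; so the real task is to match that optimum with $\max_{\vec x\in\mathbb{R}^{nl}\setminus K}F^L(\vec x)/G^L(\vec x)$, after which Propositions~\ref{eq:FkL} and~\ref{eq:GkL} supply the explicit formulas and the statement follows.

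First I would record the pointwise bound $0\le F(T_1,T_2)\le G(T_1,T_2)$ on $\power_2([ln])$: this holds because $|\partial A_j|\le\vol(A_j)$ for each block $A_j$ (every edge counted in $\partial A_j$ contributes to the degree of one of its endpoints in $A_j$), summed over $j=3^l-k,\dots,3^l-1$. Two consequences are used below. First, $G(T_1,T_2)=0$ forces $F(T_1,T_2)=0$, so $F/G$ extends by the value $0$ across the degenerate set-pairs. Second, reading Proposition~\ref{eq:GkL} together with the integral form (Proposition~\ref{def:Lovasz2}), one gets $G^L(\vec x)=\sum_{j=1}^n d_jz_j$, whence $K=\{\vec x:\ z_j=0\ \forall j\}$ is exactly $\{\vec x:\ G^L(\vec x)=0\}$, and $F^L/G^L$ is well defined precisely on its complement.

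Next I would rerun the two halves of the proof of Theorem~\ref{th:set-pair-Lovasz}, now allowing the denominator to vanish on some chain levels. For the inequality ``$\le$'': given $\vec x\in\mathbb{R}^{nl}\setminus K$, invoke the chain form of the set-pair Lov\'asz extension (Proposition~\ref{def:Lovasz1}) to write $\vec x=\sum_i\lambda_i\vec 1_{V_i^+,V_i^-}$ along a chain with $\lambda_i\ge0$, so that $F^L(\vec x)=\sum_i\lambda_iF(V_i^+,V_i^-)$ and $G^L(\vec x)=\sum_i\lambda_iG(V_i^+,V_i^-)>0$; by the first consequence above the levels with $G(V_i^+,V_i^-)=0$ contribute nothing to either sum, so $F^L(\vec x)/G^L(\vec x)$ is a convex combination of the ratios $F(V_i^+,V_i^-)/G(V_i^+,V_i^-)$ over the levels with positive denominator, hence at most $\max\{F(T_1,T_2)/G(T_1,T_2):\ G(T_1,T_2)>0\}$. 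For ``$\ge$'': evaluate at $\vec x=\vec 1_{T_1,T_2}$ for a set-pair attaining that maximum, which lies outside $K$ exactly because $G(T_1,T_2)>0$, and apply~\eqref{eq:fact} to get $F^L(\vec 1_{T_1,T_2})=F(T_1,T_2)$ and $G^L(\vec 1_{T_1,T_2})=G(T_1,T_2)$.

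I expect the main obstacle to be precisely the behaviour along $K$: Theorem~\ref{th:set-pair-Lovasz} is formulated with a strictly positive denominator, so the work lies in justifying that dropping the vanishing chain levels loses nothing (which is exactly where $F\le G$ enters) and that removing $K$ from $\mathbb{R}^{nl}$ is the faithful continuous counterpart of the constraint $\sum\vol(A_i)>0$ on the combinatorial side. A secondary point is consistency bookkeeping: one must verify that the maps $h_1,h_2,h_3$ of Figure~\ref{commutative} really carry the $\power_2([ln])$ optimization onto the $k$-cut optimization in~\eqref{eq:maxk}, and that the ternary-index conventions appearing in Propositions~\ref{eq:FkL} and~\ref{eq:GkL} agree with the integral definition~\eqref{F0Lk} of $F^L$.
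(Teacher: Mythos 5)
Your continuous-versus-combinatorial argument is sound, and it is essentially the route the paper has in mind (the paper states this proposition with no proof beyond ``Accordingly, we get,'' so everything rests on exactly the points you raise). Your refinement of Theorem~\ref{th:set-pair-Lovasz} is the right one: since $G$ does not satisfy the positivity hypothesis of that theorem, you use $0\le F\le G$ on $\power_2([ln])$ (hence $F^L\le G^L$), the chain form of Proposition~\ref{def:Lovasz1} to discard chain levels with vanishing denominator, evaluation at $\vec 1_{T_1,T_2}$ via \eqref{eq:fact}, and the identification $K=\{\vec x:\,G^L(\vec x)=0\}$ (valid because the graph is connected, so $d_j>0$, by Proposition~\ref{eq:GkL}). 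What this correctly establishes is
$\max_{\vec x\in\mathbb{R}^{nl}\setminus K} F^L(\vec x)/G^L(\vec x)=\max\{F(T_1,T_2)/G(T_1,T_2):\ (T_1,T_2)\in\power_2([ln]),\ G(T_1,T_2)>0\}$.

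The genuine gap is the step you defer as ``consistency bookkeeping'': the identification of that combinatorial optimum with $h_k(G)$ of \eqref{eq:maxk}. The feasible set in \eqref{eq:maxk} is $\mathcal{H}_k([n])$, i.e.\ covering partitions (and the formula there is even written with a $\min$), whereas the image of $h=h_1\circ h_2\circ h_3$ consists of arbitrary disjoint $k$-tuples $(A_{3^l-k},\ldots,A_{3^l-1})$, covering or not, and the two optima differ in general. Concretely, for any vertex $v$ take $T_1=\varnothing$ and $T_2=\{v,\,v+n,\ldots,v+(l-1)n\}$: then $A_{3^l-1}=\{v\}$, all other selected blocks are empty, and $F/G=|\partial\{v\}|/\vol(\{v\})=1$, so the right-hand side you have proved equal to the continuous maximum is identically $1$; by contrast, the optimum over $\mathcal{H}_k([n])$ of $\sum_i|\partial A_i|/\vol(V)$ is at most $1$ with equality only for $k$-colorable graphs, and the literal $\min$ in \eqref{eq:maxk} is smaller still. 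Hence the first sentence of your proposal asserts an equality that is never proved and, as stated, fails. To close the argument you would need either to cut the continuous feasible region further than just removing $K$, so that admissible $\vec x$ correspond to covering $k$-tuples, or to prove (or reformulate the definition of $h_k$ so that) the optimum over non-covering tuples agrees with the one over $\mathcal{H}_k([n])$; you must also reconcile the $\min$ in \eqref{eq:maxk} with the $\max$ in the proposition rather than silently switching between them.
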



\subsection{Graph $2$-cut problems}

With the help of the following lemma, the proposed set-pair Lov\'asz extension
also works for some graph $2$-cut problems.



\begin{lemma}\label{lem:}
Suppose $f,g:\power(V)\to [0,+\infty)$  are two symmetric functions with $g(A)>0$ for any $A\in \power(V)$.
Let $F(A,B)=f(A)+f(B)$ and $G(A,B)=g(A)+g(B)$. Then
\begin{align}
\min\limits_{A\in \power(V)}\frac{f(A)}{g(A)}&=\min\limits_{(A,B)\in \power_2(V)}\frac{F(A,B)}{G(A,B)}, \label{eq:min1}\\
\max\limits_{A\in \power(V)}\frac{f(A)}{g(A)}&=\max\limits_{(A,B)\in \power_2(V)}\frac{F(A,B)}{G(A,B)}.\label{eq:max2}
\end{align}
\end{lemma}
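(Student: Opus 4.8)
The plan is to prove the two claimed identities by a double-inequality argument in each case, exploiting the fact that $F$ and $G$ are built additively from the symmetric functions $f$ and $g$, together with the elementary ``mediant'' inequality for fractions: if $a,b,c,d\ge 0$ with $b,d>0$, then $\frac{a+c}{b+d}$ lies between $\frac{a}{b}$ and $\frac{c}{d}$. Since $g(A)>0$ for every $A$, the denominators $G(A,B)=g(A)+g(B)$ are always strictly positive, so all ratios below are well defined.

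First I would prove $\ge$ in \eqref{eq:min1}. Fix any $(A,B)\in\power_2(V)$. By the mediant inequality,
\begin{equation*}
\frac{F(A,B)}{G(A,B)}=\frac{f(A)+f(B)}{g(A)+g(B)}\ge\min\left\{\frac{f(A)}{g(A)},\frac{f(B)}{g(B)}\right\}\ge\min_{C\in\power(V)}\frac{f(C)}{g(C)}.
\end{equation*}
Taking the minimum over all $(A,B)$ gives one direction. For the reverse inequality, let $A^\ast$ attain $\min_{A}\frac{f(A)}{g(A)}$, and test the right-hand side at the pair $(A^\ast,A^\ast)$; wait --- that pair is not admissible since we need $A\cap B=\varnothing$. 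This is where symmetry of $f$ and $g$ enters: instead use the pair $(A^\ast,(A^\ast)^c)$, which lies in $\power_2(V)$, and compute
\begin{equation*}
\frac{F(A^\ast,(A^\ast)^c)}{G(A^\ast,(A^\ast)^c)}=\frac{f(A^\ast)+f((A^\ast)^c)}{g(A^\ast)+g((A^\ast)^c)}=\frac{2f(A^\ast)}{2g(A^\ast)}=\frac{f(A^\ast)}{g(A^\ast)}=\min_{A}\frac{f(A)}{g(A)},
\end{equation*}
using $f((A^\ast)^c)=f(A^\ast)$ and $g((A^\ast)^c)=g(A^\ast)$. Hence $\min_{(A,B)}\frac{F}{G}\le\min_A\frac{f}{g}$, and the two inequalities together give \eqref{eq:min1}.

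The proof of \eqref{eq:max2} is completely parallel: the mediant inequality gives $\frac{F(A,B)}{G(A,B)}\le\max\{\frac{f(A)}{g(A)},\frac{f(B)}{g(B)}\}\le\max_C\frac{f(C)}{g(C)}$ for every pair, hence $\max_{(A,B)}\frac{F}{G}\le\max_A\frac{f}{g}$; and evaluating at $(A^\ast,(A^\ast)^c)$ with $A^\ast$ a maximizer of $\frac{f}{g}$ shows the reverse, again via symmetry of $f,g$. I expect the only real subtlety --- and the one point that genuinely uses the hypotheses --- is precisely the need to replace the ``diagonal'' pair $(A^\ast,A^\ast)$, which is forbidden by disjointness, with $(A^\ast,(A^\ast)^c)$; the symmetry assumption $f(A)=f(A^c)$, $g(A)=g(A^c)$ is exactly what makes this substitution cost nothing. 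Everything else is the routine mediant estimate, so there is no serious obstacle; one should just be careful to note at the outset that all denominators are positive so that no degenerate fractions arise, and that $(\varnothing,\varnothing)$ is permitted here (unlike in Theorem \ref{th:set-pair-Lovasz}) because $g(\varnothing)>0$ by hypothesis.
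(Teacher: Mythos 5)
Your proposal is correct and follows essentially the same route as the paper: the mediant bound you invoke is exactly the inequality the paper verifies by computing the difference $\frac{f(A_0)+f(B_0)}{g(A_0)+g(B_0)}-\frac{f(A_0)}{g(A_0)}\ge 0$ at a minimizing pair, and the reverse inequality is obtained in both arguments by testing the pair $(A^\ast,(A^\ast)^c)$ and using the symmetry $f(A)=f(A^c)$, $g(A)=g(A^c)$. No substantive difference.
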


\begin{proof}
First we prove \eqref{eq:min1}. On one hand, let $(A_0, B_0)\in \power_2(V)$ be the minimizer of $\frac{f(A)+f(B)}{g(A)+g(B)}$. Without loss of generality, we may assume $\frac{f(A_0)}{g(A_0)}\leq\frac{f(B_0)}{g(B_0)}$. Then
$$
\frac{f(A_0)+g(B_0)}{g(A_0)+g(B_0)} -\frac{f(A_0)}{g(A_0)} =  \frac{f(B_0)g(A_0)-f(A_0)g(B_0)}{g(A_0)(g(A_0)+g(B_0))}\geq 0,
$$
which follows that
$$
\min\limits_{(A,B)\in \power_2(V)}\frac{f(A)+f(B)}{g(A)+g(B)}=\frac{f(A_0)+g(B_0)}{g(A_0)+g(B_0)}\ge\frac{f(A_0)}{g(A_0)}\ge \min\limits_{A\in \power(V)}\frac{f(A)}{g(A)}.
$$
On the other hand, let $A_1\subset V$ be the minimizer of $\frac{f(A)}{g(A)}$. Then we have
$$\min\limits_{(A,B)\in \power_2(V)}\frac{f(A)+f(B)}{g(A)+g(B)}\leq \frac{f(A_1)+f(A_1^c)}{g(A_1)+g(A_1^c)}= \frac{f(A_1)}{g(A_1)}=\min\limits_{A\in \power(V)}\frac{f(A)}{g(A)},$$
and hence,
$$
\min\limits_{A\in \power(V)}\frac{f(A)}{g(A)}=\min\limits_{(A,B)\in \power_2(V)}\frac{f(A)+f(B)}{g(A)+g(B)}.
$$

It is also true if we replace `min' by 'max', i.e., \eqref{eq:max2} holds.
\end{proof}
\begin{proof}[Proof of Theorem \ref{th:maxcut}]
The proof involves $F_1$ and $G_1$. Let
\begin{equation*}
f(A)=\abs{\partial A}\;\;\text{ and }\;\;g(A)=\frac{1}{2} \vol(V).
\end{equation*}
Since $f$ and $g$ are symmetric functions, by Lemma \ref{lem:} and Theorem \ref{th:set-pair-Lovasz}, we have
\begin{align*}
h_{\max}(G) &= \max_{S\in \power(V)}\frac{2|\partial S|}{\vol(V)}=\max_{S\in \power(V)}\frac{f(S)}{g(S)}=\max\limits_{(A,B)\in \power_2(V)}\frac{F_1(A,B)}{G_1(A,B)}
\\&=\max\limits_{(A,B)\in \power_2(V)\setminus \{(\varnothing,\varnothing)\}}\frac{F_1(A,B)}{G_1(A,B)}=\max\limits_{\vec   x\ne\vec  0}\frac{F_1^L( \vec  x)}{G_1^L( \vec  x)}.
\end{align*}
Accordingly, we have
\begin{equation*}
h_{\max}(G)=\max\limits_{\vec   x\ne\vec  0}\frac{F_1^L(\vec x)}{G_1^L(\vec x)}=\max\limits_{\vec   x\ne\vec  0}\frac{I(\vec x)}{\vol(V)\norm{\vec x}}.
\end{equation*}
\end{proof}


\begin{proof}[Proof of Theorem \ref{th:cheeger_full}]
Let $f(A)=\abs{\partial A}$ and $g(A)=\min\{ \vol(A),\vol(A^c)\}$.
Since $f$, $g$ are symmetric functions, by Eq.~\eqref{eq:0-to-nonconstant}, Lemma \ref{lem:} and Proposition \ref{pro2}, we have
\begin{align*}
h(G)&=\min_{S\subset V}\frac{f(S)}{g(S)}=\min\limits_{(A,B)\in \power_2(V)\setminus\{(\varnothing,\varnothing),(\varnothing,V),(V,\varnothing)\} }\frac{F_1(A,B)}{G_2(A,B)}  \\
&=\min\limits_{\vec x \text{ nonconstant}}\frac{F_1^L( \vec  x)}{G_2^L( \vec  x)}
=\inf_{\vec x \text{ nonconstant}} \sup_{c\in \mathbb{R}}\frac{I(\vec x)}{\sum_{i=1}^n d_i|x_i-c|},
\end{align*}
where $F_1^L$ and $G_2^L$ have been presented in Table~\ref{tab:L}.
\end{proof}
\begin{proof}[Proof of Theorem \ref{th:anti-Cheeger}]
Let
\begin{equation}
f(A)=\abs{\partial A}\;\;\text{ and }\;\;g(A)=\max\{ \vol(A),\vol(A^c)\}.
\end{equation}
Since $f$, $g$ are symmetric functions,
by Lemma \ref{lem:} and Theorem \ref{th:set-pair-Lovasz},
we obtain
\begin{align*}
\ah(G)&=\max_{S\in\power(V)}\frac{f(S)}{g(S)}=
\max\limits_{(A,B)\in\power_2(V)\}}\frac{F_1(A,B)}{G(A,B)} \\
&=\max\limits_{(A,B)\in\power_2(V)\setminus\{(\varnothing,\varnothing)\}\}}\frac{F_1(A,B)}{G(A,B)} =
\max\limits_{\vec   x\ne\vec  0}\frac{F_1^L( \vec  x)}{G^L( \vec  x)},
\end{align*}
where $G(A,B) = 2G_1(A,B)-G_3(A,B)$,
and $G^L= 2G_1^L-G_3^L$. Thus,
\begin{equation*}
\ah(G)=\max_{\vec x\ne \vec0}\frac{F_1^L(\vec x)}{2G_1^L(\vec x)-G_3^L(\vec x)}=\max_{\vec x\ne \vec0}\frac{I(\vec x)}{2\vol(V)\|\vec x\|_\infty - \min\limits_{\alpha\in \mathbb{R}}\|\vec x-\alpha \vec 1\| }.
\end{equation*}
\end{proof}










\begin{thebibliography}{BRSH13}

\bibitem[Bac13]{Bach2013}
F.~Bach.
\newblock {Learning with submodular functions: A convex optimization
  perspective}.
\newblock {\em Found. Trends Mach. Learning}, 6:145--373, 2013.

\bibitem[BF08]{BercziFrank2008}
K.~B{\'e}rczi and A.~Frank.
\newblock Variations for {Lov{\'a}sz}' submodular ideas.
\newblock In M.~Gr{\"o}tschel and G.~O.~H. Katona, editors, {\em {Building
  Bridges between Mathematics and Computer Science}}, pages 137--164. Springer,
  2008.

\bibitem[BJ13]{BauerJost2013}
F.~Bauer and J.~Jost.
\newblock Bipartite and neighborhood graphs and the spectrum of the normalized
  graph laplace operator.
\newblock {\em Commun. Anal. Geom.}, 21:787--845, 2013.

\bibitem[BRSH13]{BuhlerRangapuramSetzerHein2013}
T.~B{\"u}hler, S.~S. Rangapuram, S.~Setzer, and M.~Hein.
\newblock Constrained fractional set programs and their application in local
  clustering and community detection.
\newblock In {\em Proceedings of the 30th International Conference on Machine
  Learning}, pages 624--632, 2013.

\bibitem[Cha16]{Chang2015}
K.~C. Chang.
\newblock Spectrum of the $1$-laplacian and {Cheeger}'s constant on graphs.
\newblock {\em J. Graph Theor.}, 81:167--207, 2016.

\bibitem[Chu97]{Chung1997}
F.~R.~K. Chung.
\newblock {\em {Spectral Graph Theory}}.
\newblock American Mathematical Society, revised edition, 1997.

\bibitem[CSZ15]{ChangShaoZhang2015}
K.~C. Chang, S.~Shao, and D.~Zhang.
\newblock {The 1-Laplacian Cheeger cut: Theory and algorithms}.
\newblock {\em J. Comput. Math.}, 33:443--467, 2015.

\bibitem[CSZ16]{ChangShaoZhang2016-maxcut}
K.~C. Chang, S.~Shao, and D.~Zhang.
\newblock {Spectrum of the signless $1$-Laplacian and the dual {Cheeger}
  constant on graphs}.
\newblock {\em arXiv:1607.00489}, 2016.

\bibitem[CSZ17]{ChangShaoZhang2017}
K.~C. Chang, S.~Shao, and D.~Zhang.
\newblock {Nodal domains of eigenvectors for {1-Laplacian} on graphs}.
\newblock {\em Adv. Math.}, 308:529--574, 2017.

\bibitem[CSZZ18]{ChangShaoZhangZhang2018-a}
K.~C. Chang, S.~Shao, D.~Zhang, and W.~Zhang.
\newblock A simple algorithm for {Max Cut}.
\newblock {\em arXiv:1803.06496}, 2018.

\bibitem[FJ97]{FriezeJerrum1997}
A.~Frieze and M.~Jerrum.
\newblock {Improved approximation algorithms for MAX k-CUT and MAX BISECTION}.
\newblock {\em Algorithmica}, 18:67--81, 1997.

\bibitem[HB10]{HeinBuhler2010}
M.~Hein and T.~B\"{u}hler.
\newblock An inverse power method for nonlinear eigenproblems with applications
  in 1-spectral clustering and sparse {PCA}.
\newblock In {\em Advances in Neural Information Processing Systems 23}, pages
  847--855, 2010.

\bibitem[Kar72]{Karp1972}
R.~M. Karp.
\newblock Reducibility among combinatorial problems.
\newblock In R.~E. Miller, J.~W. Thatcher, and J.~D. Bohlinger, editors, {\em
  {Complexity of Computer Computations}}, pages 85--103. Springer, 1972.

\bibitem[Lov83]{Lovasz1983}
L.~Lov{\'a}sz.
\newblock Submodular functions and convexity.
\newblock In A.~Bachem, M.~Gr{\"o}tschel, and B.~Korte, editors, {\em
  {Mathematical Programming: the State of the Art}}, pages 235--257. Springer,
  1983.

\bibitem[Tre12]{Trevisan2012}
L.~Trevisan.
\newblock Max cut and the smallest eigenvalue.
\newblock {\em SIAM J. Comput.}, 41:1769--1786, 2012.

\bibitem[Xu16]{Xu2016}
B.~Xu.
\newblock {Graph partitions: Recent progresses and some open problems}.
\newblock {\em Adv. Math. (China)}, 45:1--20, 2016.

\end{thebibliography}

\end{document}